\title{The gap between a variational problem and its occupation measure relaxation}
\author{Milan Korda$^{1,2}$ and Rodolfo R\'ios-Zertuche$^{2,3}$}
\date{}
\newcommand{\R}{\mathbb{R}}
\newtheorem{theorem}{Theorem}[section]
\newtheorem{lemma}[theorem]{Lemma}
\theoremstyle{definition}
\newtheorem{definition}[theorem]{Definition}
\newtheorem{example}[theorem]{Example}
\theoremstyle{remark}
\newtheorem{remark}[theorem]{Remark}
\newcommand{\denomE}{41}
\newcommand{\denomxi}{10}
\newcommand{\denomA}{4}
\newcommand{\proj}{\pi}
\newcommand{\projXY}{\proj_{\Omega\times Y}}
\newcommand{\projX}{\proj_{\Omega}}
\newcommand{\projXYZ}{\proj_{\Omega\times Y\times Z}}
\newcommand{\projdXY}{\proj_{\partial\Omega\times Y}}
\newtcolorbox{mymathbox}[1][]{colback=white, sharp corners, #1}
\newcommand{\uu}{u}
\begin{document}

\maketitle

\footnotetext[1]{CNRS; LAAS; 7 avenue du colonel Roche, F-31400 Toulouse; France. }
\footnotetext[2]{Faculty of Electrical Engineering, Czech Technical University in Prague,
Technick\'a 2, CZ-16626 Prague, Czech Republic.}
\footnotetext[3]{Department of Mathematics and Statistics, UiT The Arctic University of Norway.}
\renewcommand{\thefootnote}{\fnsymbol{footnote}} 
\footnotetext{\emph{MSC classes.} 35Q93 (Primary), 49Q15, 26B40, 65M99 (Secondary)}     
\renewcommand{\thefootnote}{\arabic{footnote}}

\begin{abstract}
 Recent works have proposed linear programming relaxations of variational optimization problems subject to nonlinear PDE constraints based on the occupation measure formalism. The main appeal of these methods is the fact that they rely on convex optimization, typically semidefinite programming. In this work we close an open question related to this approach. We prove that the classical and relaxed minima coincide when the dimension of the codomain of the unknown function equals one, both for calculus of variations and for optimal control problems, thereby complementing analogous results that existed for the case when the dimension of the domain equals one. In order to do so, we prove that, when the dimension of the codomain equals one, a relaxed occupation measure can be decomposed as a superposition of functions in Sobolev space $W^{1,\infty}(\Omega)$. 
 We also show by means of a counterexample that, if both the dimensions of the domain and of the codomain are greater than one, there may be a positive gap. The example we construct to show the latter serves also to show that sometimes relaxed occupation measures may represent a more conceptually-satisfactory ``solution'' than their classical counterparts, so that ---even though they may not be equivalent--- algorithms rendering accessible the minimum in the larger space of relaxed occupation measures remain extremely valuable. Finally, we show that in the presence of integral constraints, a positive gap may occur at any dimension of the domain and of the codomain.
\end{abstract}

\tableofcontents

\section{Introduction}
\label{sec:intro}
The occupation measure relaxation replaces a nonconvex variational problem by an infinite-dimensional linear program (LP)  in the space of Radon measures defined on subsets of a Euclidean space. A distinguishing feature of this relaxation is the fact that the LP can be approximated by a hierarchy of tractable convex semidefinite programming problems (referred to as the moment-sum-of-squares or Lasserre hierarchy~\cite{lasserre2009moments}) that provably converge to the solution to the LP under mild assumptions, thereby providing an appealing mesh-free numerical scheme for this class of problems~\cite{korda2018moments}%
 . 
 This work is concerned with the fundamental question of the equivalence of the original variational problem and this relaxation, which justifies the  use of the method. This equivalence is referred to as the absence of a \emph{relaxation gap}.

In its full generality, the variational problem  considered here is subject to constraints in the form of first-order nonlinear partial differential equations and inequalities, and therefore allows one to model a large number of problems from engineering, material sciences, biology, and other domains, and can be viewed as an instance of PDE constrained optimization~\cite{hinze2008optimization}. As such, it also encodes a wealth of PDEs: not only can first-order nonlinear PDEs be recovered by imposing them as constraints, but in fact many other equations arise indirectly as the Euler-Lagrange equations of the variational problem at hand. These include for example elliptic equations \cite[Chapter 4]{le2016partial}, the heat equation \cite[Section 2.1]{armstrong2018quantitative}, the Einstein equations of relativity \cite[Chapter 3]{ashtekartate}, the Schrödinger equation \cite{kobe2006lagrangian,XU2006627}, and the equations of fluid dynamics \cite{brenier1989least,benamou2000computational%
}. Some other important equations can be obtained as Euler-Lagrange equations of a simple generalization of the method considered here, in which the Lagrangian is allowed to involve also the second-order derivatives, for example,  the Korteweg–de Vries equation \cite{HE2004847} and the wave equation \cite{HE2013158}. Apart from these problems, there are many others that can be modelled using the variational framework discussed here, including optimal control~\cite{lasserre2008nonlinear} and optimal transport problems \cite{bernard2007optimal}, as well as the asymptotic dynamics of differential inclusions \cite{bianchirz} as well as finding optimal Poincare constants~\cite{chernyavsky2021convex}. The method of \cite{korda2018moments} can be used to attack all of these problems, whence the motivation for the resolution of the question considered in this paper, which has been at the center of the analysis of this method since its inception.

In this section we present a simplified version of the problem omitting constraints and boundary terms, introduce the convex relaxation and discuss the rich history of the question of equivalence to the original problem. The full version of the problem is treated in Section~\ref{sec:codim1}, with the main results being Theorem \ref{thm:decomposition} (superposition), Theorems \ref{thm:consolidated} and \ref{thm:nogap} (no gap in codimension one); these results are also stated in the context of optimal control in Section \ref{sec:oc}, where the main result is Theorem \ref{cor:oc}. An example with a relaxation gap in codimension greater than one is constructed in Section~\ref{sec:positive gap} with the main result being Theorem~\ref{thm:gap}. Additional examples, showing that there may be a relaxation gap when integral constraints are involved, are presented in Section \ref{sec:integralconstraints}.

\paragraph{A global optimization problem.}

Let $n,m>0$. Let $\Omega\subset\R^n$ be a bounded, connected, open set with piecewise $C^1$ boundary $\partial \Omega$, and  $Y=\R^m$ and $Z=\R^{n\times m}$. Let the Lagrangian density be a locally bounded, measurable function $L\colon \Omega\times Y\times Z\to\R$ that is convex in $z$. 

Let $W^{1,\infty}(\Omega;Y)$ denote the Sobolev space of Lipschitz functions. Observe that for a function $y\in W^{1,\infty}(\Omega;Y)$, the dimension $n$ of the domain of $y$ and the dimension $m$ of its range are also, respectively, the dimension and codimension of the graph of $y$ in $\Omega\times Y$. Therefore, throughout this work we refer to $n$ as the dimension and $m$ as the codimension.

Using these data, consider the problem of determining, globally, the infimum of a possibly nonconvex functional:
\begin{align}\label{eq:pde}
 &\inf\limits_{y \in W^{1,\infty}(\Omega;Y)}  \displaystyle\int_{\Omega} L(x,y(x),D y(x))\, dx.
\end{align}
In \cite{korda2018moments}, it is proposed to attack this problem by first relaxing it to take the infimum over the space of relaxed occupation measures rather than over $W^{1,\infty}(\Omega;Y)$, as this relaxation is amenable --- at least when we have semialgebraic data $\Omega$ and $L$ --- to numerical solution through a hierarchy of finite-dimensional convex semidefinite programs, without resorting to spatio-temporal\footnote{In this context, the ``space'' variable corresponds to $y$ and the (possibly vector-valued) ``time'' variable corresponds to $x$.} discretization. The details of this semidefinite programming hierarchy are not the topic of this work; the reader is referred to~\cite{lasserre2009moments} for basic theory and to~\cite{henrion2020moment} for a number of applications. In this work we focus on the occupation measure relaxation of \eqref{eq:pde}, which we now explain in detail and give the necessary definitions to outline our results.%

\paragraph{Occupation measure relaxation.}
In order to introduce the concept of occupation measures, first observe that each function $y\in C^1(\overline\Omega)$ induces a measure $\mu_y$ on $\Omega\times Y\times Z$ by pushing forward 
the Lebesgue measure on $\Omega$ by the map $x\mapsto (x,y(x),Dy(x))$; in other words, for any measurable function $f\colon \Omega\times Y\times Z\to\R$ we have
\[\int_{\Omega\times Y\times Z}f\,d\mu_y=\int_\Omega f(x,y(x),Dy(x))\,dx.\]
The measure $\mu_y$ is the \emph{occupation measure} associated to the function $y$, and encodes $y$ and its derivative $Dy$.
For all compactly-supported test functions $\phi\in C_c^\infty(\Omega\times Y)$, applying the fundamental theorem of calculus to the function $x_\ell\mapsto \phi(x_1,\dots,x_\ell,\dots,x_n,y(x_1,\dots,x_\ell,\dots,x_n))$, we have
\[\int_\Omega\left[\frac{\partial\phi}{\partial x_\ell}(x,y(x))+\sum_{i=1}^m\frac{\partial\phi}{\partial y_i}(x,y(x))\frac{\partial y_i}{\partial x_\ell}(x)\right]dx=0,\quad \ell=1,\dots,n,
\]
as $\phi$ vanishes on the boundary $\partial\Omega$.
Thus $\mu_y$ satisfies
\[\int_\Omega\left[\frac{\partial\phi}{\partial x_\ell}(x,y)+\sum_{i=1}^m\frac{\partial\phi}{\partial y_i}(x,y)z_{i\ell}\right]d\mu_y(x,y,z)=0,\quad \ell=1,\dots,n,\]
for $\phi\in C_c^\infty(\Omega\times Y)$. This is the property we will use to obtain a %
larger set of measures in which we can still meaningfully consider the problem \eqref{eq:pde}.

Define the space $\mathcal M_0$ of \emph{relaxed occupation measures} to be the set of Radon measures $\mu$ on $\Omega\times Y\times Z$ satisfying, for all $\phi\in C^\infty_c(\Omega\times Y)$,
\begin{equation}\label{eq:closed}
 \int_{\Omega\times Y\times Z}\left[\frac{\partial\phi}{\partial x_\ell}(x,y)+\sum_{i=1}^m\frac{\partial\phi}{\partial y_i}(x,y)z_{i\ell}\right]d\mu(x,y,z)=0,\quad \ell=1,\dots,n,
\end{equation}
as well as
\begin{equation}\label{eq:finitemoments}
 \int_{\Omega\times Y\times Z} \|z\|\,d\mu(x,y,z)<+\infty.
\end{equation}
Then $\mathcal M_0$ contains all the occupation measures $\mu_y$ induced by $C^1$ functions $y$, as we noted above, so we have that the relaxed infimum
\begin{equation}\label{eq:relaxedpde}
 \inf_{\mu\in \mathcal M_0}\int_{\Omega\times Y\times Z}L(x,y,z)\,d\mu(x,y,z)
\end{equation}
is a lower bound of the original problem \eqref{eq:pde}. The advantage of \eqref{eq:relaxedpde} is that it is a linear programming problem, albeit infinite-dimensional, and it is possible to approximate it arbitrarily well using a hierarchy of semidefinite programming problems, at least when $\Omega$ and $L$ are semialgebraic \cite{korda2018moments}. 

 Occupation measures have a rich history within variational calculus ever since it was realized that they are useful in convexifying a large class of problems. %
 They are also closely related to gradient Young measures, which are families of probabilities encoding, for each $x$, a convex combination of vectors whose average equals the weak derivative $Dy(x)$ of a function $y$%
 . The moment-sum-of-squares hierarchy was partially applied to Young measures in several papers (see for example \cite{meziat2005analysis,meziat2008exact,meziat2010coarse}) before the appearance of the paper \cite{korda2018moments} that applied the moment-sum-of-squares hierarchy to the full occupation measure. Relaxations similar to the one used in the latter paper and studied here had also appeared before, for example in \cite{awi2014polyconvex}%
 \footnote{The ``measures reminiscent of Young's measures'' considered in \cite{awi2014polyconvex} are different to the relaxed occupation measures studied in this paper; roughly speaking, the former correspond to measures $\mu$ on $\Omega\times Y\times Z$ such that $(\projX)_{\#}\mu$ is 
the Lebesgue measure on $\Omega$, and, after disintegrating $\mu=\int_{\Omega}\mu_xdx$ with $\mu_x$ a probability on $Y\times Z$ for each $x\in\Omega$, and setting $\bar y(x)=\int y\,d\mu_x(y,z)$ and $\bar z(x)=\int z\,d\mu_x(y,z)$, these functions are required to verify that $\bar z$ is the distributional derivative of $\bar y$, that is, $\int_{\Omega}\sum_{i,j}\bar z_{ij}(x)\psi_{ij}(x)\,dx=-\int_{\Omega}\sum_{i,j}\bar y_j(x)\frac{\partial\psi_{ij}}{\partial x_i}(x)\,dx$ for all $\psi\in C^\infty_c(\Omega;\R^{n\times m})$. This condition, which corresponds to taking $\phi(x,y)=y_j\psi_{ij}(x)$ in \eqref{eq:closed} and adding over $i$, is weaker than \eqref{eq:closed}, so the set of these measures is considerably larger. Since they include representatives of discontinuous set-valued maps, we do not expect a no-gap result to be within reach in that context without imposing strong constraints in $L$.}%
 , but without a connection to moment-sum-of-squares hierarchies and in a specific setting with convex integrands as opposed to the general setting of 
 \cite{korda2018moments}. 

Throughout this history, however, the question of the equivalence of problems \eqref{eq:pde} and \eqref{eq:relaxedpde} has remained open in full generality and is the topic of this paper. 

\begin{example}\label{ex:doublewell} 
To give a simple %
example when a gap between~(\ref{eq:pde}) and (\ref{eq:relaxedpde}) may occur in the presence of additional constraints on $y(\cdot)$, consider $\Omega = [0,1]$, the double-well potential $L(x,y,z) = \min(|z-1|,|z+1|)$ and the constraint $y(x) = 0$ in $\Omega$. This constraint is modeled as a support constraint on $\mu$ in~(\ref{eq:relaxedpde}) in the form $\mathrm{supp}\,\mu \subset \{(x,y,z)\,:\, y = 0\}$. In this case, the only function $y \in W^{1,p}$, $p\in[1,\infty]$, feasible in~(\ref{eq:pde}) is $y = 0$, attaining the value $+1$ whereas the measure $\mu = dx\otimes \delta_0 \otimes (\frac{1}{2}\delta_{-1} + \frac{1}{2}\delta_{+1})$ attains the infimum of~(\ref{eq:relaxedpde}) equal to 0. 
\end{example}

The paper \cite{fantuzzi2022sharpness}, Example 7.1, also shows a relaxation gap in the presence of a double-well potential in $z$ in a situation that, unlike Example \ref{ex:doublewell}, no constraints in $y$ are imposed in the interior of the domain, but the dimension of both $\Omega$ and $Y$ is required to be at least 2. 

Both these examples have the property that $L$ is \emph{not convex} in $z$. We will see that this important property is sufficient for the absence of a relaxation gap if the dimension of either $\Omega$ or $Y$ is equal to one, although it \emph{may not suffice} if both of these dimensions are greater than one. In particular we will see that the infimum of~(\ref{eq:relaxedpde}) \emph{need not be equal} to the infimum in~(\ref{eq:pde}) even when $L$ is replaced by its convexification or quasiconvexification in $z$.

\paragraph{Contributions and previous work.}

It will perhaps come as no surprise that the question of the equivalence of problems \eqref{eq:pde} and \eqref{eq:relaxedpde} depends on the dimensions $n=\dim \Omega$ and $m=\dim Y$, since many related questions have been found to depend on these quantities, such as the regularity of minimal surfaces (see for example \cite{de2014regularity}) and  the possibility of generalization of the Frobenius theorem \cite{alberti2017some,schioppa2016unrectifiable}, among many other examples. Notice that $n$ is the dimension of the graph of a classical minimizer $y$, while $m$ is the codimension of this graph, which motivates our terminology below.

We distinguish three cases according to the dimension and the codimension of the graph of the decision variable $y(\cdot)$ in $\Omega\times Y$:
\begin{itemize}
 \item \emph{Dimension 1, that is, $n=\dim \Omega=1$ and any $m=\dim Y>0$.} In this case, \eqref{eq:pde} and \eqref{eq:relaxedpde} are equivalent. 
 
 The ideas behind this result originated in the seminal work of Young~\cite{young_lectures} (see also \cite{patrick}) but were to the best of our knowledge first proven by Rubio~\cite{rubio1975generalized,rubio1976extremal} and Lewis and Vinter~\cite{vinter1978equivalence,lewis1980relaxation}, and have since been clarified by a number of authors (see for example \cite{bernard2008young}). Computationally, this approach was used in conjunction with semidefinite-programming relaxations in~\cite{lasserre2008nonlinear} for optimal control as well as in~\cite{kordaROA} for region of attraction computation, proving a slight generalization of~\cite{vinter1978equivalence} using a superposition theorem from~\cite{ambrosio2008transport}. We remark that in those papers the equivalence has been proved%
 \footnote{
 To translate the general problem of the type we discuss in Section \ref{sec:oc}, which involves a Lagrangian density $L$ with dependency $L(x,y,y',u)$, into the type of problem discussed in \cite{bernard2008young}, involving a Lagrangian density $\bar L$  with dependency $\bar L(x,y,y')$,  one uses the definition in \eqref{eq:defLbar}. The theory developed in \cite[\S5]{bernard2008young} then shows that, if $n=1$, the measure $\mu$ can be decomposed into a superposition of minimizing curves for $\bar L$, which in turn allows to prove the equivalence of \eqref{eq:pde} and \eqref{eq:relaxedpde} using arguments akin to those of the proof of Theorem \ref{cor:oc}.
 } in situations more general
 than the one stated in \eqref{eq:pde} and \eqref{eq:relaxedpde} that are akin to the one considered in Section \ref{sec:codim1}.

 \item \emph{Codimension 1, that is, $m=1$ and any $n>0$.} In this case, \eqref{eq:pde} and \eqref{eq:relaxedpde} are equivalent as well. 
 
 To prove this in Section \ref{sec:codim1}, we {prove a superposition result that  can perhaps be understood as a version of the coarea formula}. We obtain a decomposition of the measure $\mu$ into a convex combination of functions in Sobolev space $W^{1,\infty}(\Omega)$, which can be approximated arbitrarily well by $C^1$ functions, providing the pursued result. {The superposition result is inspired by the Hardt-Pitts decomposition \cite{hardt1996solving,zworski1988decomposition,tasso},} an old, well-known  result, the existing versions thereof do not directly apply in our setting and are hard to approach for non-expert audience{; see also Section \ref{sec:hardtpitts}}. Here, we provide a self-contained proof of the extension applicable in our setting that relies on theory mostly developed by de Giorgi on the structure of sets with finite perimeter, already made accessible in the books \cite{maggi2012sets,evansgariepy}. This result holds true in a very general setting, with the most important assumption being the convexity of $L$ in the variable $z$; see Theorem \ref{thm:nogap}. 
 
 We have also reformulated the no-gap result in the context of optimal control problems; see Section \ref{sec:oc}.
 
 The idea of reformulating \eqref{eq:pde} as a linear programming problem and using a hierarchy of semidefinite programming problems to approximate it was first proposed in~\cite{korda2018moments}. First partial positive results on the absence of relaxation gap between \eqref{eq:pde} and \eqref{eq:relaxedpde} can be found in~\cite{marx2018moment,chernyavsky2021convex}, with~\cite{marx2018moment} using additional entropy inequalities to ensure concentration of the measure  on a graph of a function for scalar hyperbolic conservation laws while~\cite{chernyavsky2021convex} treating special cases of $L$.

 \item \emph{Higher dimension and codimension, that is, any $m>1$ and any $n>1$.} In this case, we are able to construct an example in which the infimum from \eqref{eq:relaxedpde} is strictly less than the one from \eqref{eq:pde}, thus showing that these two problems are not equivalent. The example constructed in Section \ref{sec:positive gap} consists of a situation in which the measure-valued minimizer corresponds to an irreducible double-covering of $\Omega$, similar to the Riemann surface of the complex square root. The difficulty of the argument is in providing a lower bound for the integral of $L$ on every classical subsolution; this is done applying the Poincar\'e-Wirtinger inequality. In the example we construct, $L$ is of regularity  $C^{1,1}_{\mathrm{loc}}$, that is, it is differentiable with locally Lipschitz gradient, and we indicate how to construct similar examples of arbitrary regularity $C^k$, $k\geq 1$. 
 
 Under further assumptions of joint convexity in $(y,z)$ and an additional additive structure of $L$, 
 the absence of a relaxation gap can be restored~\cite{fantuzzi2022sharpness}. The additive structure assumption can also be removed \cite{convexcov}.
\end{itemize}
We have additionally found that integral constraints of the form 
\[\int_{\Omega}H(x,y(x),Dy(x))dx\leq 0\quad \textrm{or}\quad \int_{\Omega}H(x,y(x),Dy(x))dx= 0\] may give rise to positive gaps in any dimension; we give some examples in Section \ref{sec:integralconstraints}.

\paragraph{Further discussion.}
While it is tempting to understand measure-valued solutions as less satisfactory objects than their classical counterparts due to the possible existence of gaps between the original problem \ref{eq:pde} and its measure-valued relaxation \ref{eq:relaxedpde}, there are cases in which measure-valued solutions may make more sense than the ``true solutions'' of a minimization problem, depending on taste and desired applications. This in particular means that in many cases, even as there may be a gap between the classical problem \eqref{eq:pde} and its relaxation \eqref{eq:relaxedpde}, the algorithms proposed in \cite{korda2018moments} will still prove useful and valuable. 

A good example is given by the multi-valued minimizer of the Lagrangian $L$ constructed in Section \ref{sec:positive gap} below. In this case  the measure-valued minimizer correctly encodes both values, and its support elegantly occupies exactly the zeros of $L$. No weakly-differentiable function is able to capture the multi-valued aspect of the problem, and in fact no global classical solution exists. While it is possible to construct discontinuous minimizing functions, these are likely to be deemed defective or incomplete when compared to the information conveyed by the measure-valued minimizer. Thus in this case the latter is likely superior for most applications, and in this sense problem \eqref{eq:relaxedpde} may  be preferred over \eqref{eq:pde}.

{Remark that occupation measures constitute only one of several existing relaxations. An important relaxation is the one given by gradient Young measures\footnote{{A gradient Young measure is a family $(\nu_x)_{x\in\Omega}$ of Borel probability measures $\nu_x$ on $Z$ generated by a family of gradients. According to the Kinderlehrer--Pedregal theorem \cite{kinderlehrerpedregal}, such measures are characterized by two conditions:~(1) their underlying barycenter is a gradient $\nabla y(x)=\int z\,d\nu_x(z)$ for some weakly-differentiable function $y(x)$, and (2) they satisfy Jensen's inequality for all quasiconvex functions. In this relaxation, one minimizes $\int_\Omega \int_ZL(x,y(x),z)\,d\nu_x(z)\,dx$.}}.
This relaxation was designed in order to ensure the existence of minimizers for the variational problem. However, it does not address the problem of non-convexity. Indeed, if $L(x,y,z)$ is a non-convex function of $y$, the resulting gradient Young measure relaxation of the variational problem may be non-convex. The occupation measure relaxation, on the other hand, is always convex by design and its minimum is attained under mild assumptions. The price to pay is the possibility of a relaxation gap which is the topic studied in this work.}

\paragraph{Notations.}
For a set $A\subset\R^n$, we denote its closure by $\overline A$. 
For a measurable set $A\subset\R^k$, denote by $|A|$ its Lebesgue measure, and by $\chi_A$ the indicator function of $A$, which is equal to 1 on $A$ and to 0 elsewhere. Given a measure $\mu$ on a set $A$ and a map $\phi\colon A\to B$, the pushforward measure $\phi_\#\mu$ is defined by $\phi_\#\mu(X)=\mu(\phi^{-1}(X))$ for all measurable sets $X\subset B$. For a finite-dimensional linear space $V$, denote by $V^*$ the space of linear functionals $V\to\R$. Denote by $C^\infty(X)$ the set of infinitely-differentiable functions on $X$, real valued, and by $C^\infty_c(X)$ the subset consisting of compactly-supported functions. If $X$ is an open set, the functions in $C^\infty_c(X)$ must vanish in a neighborhood of the boundary $\partial X$.

For two vectors $u=(u_1,\dots,u_n)$ and $v=(v_1,\dots,v_n)$ in $\R^n$, we denote the Euclidean inner product by $\langle u,v\rangle=\sum_{i=1}^nu_iv_i$, and its induced norm by $\|u\|=\sqrt{\langle u,u\rangle}$.

For a closed set $B\subset\R^n$, the notation $C^k(B)$ denotes the space of functions $f\colon B\to\R$ such that there is an open set $U$ containing $B$ such that $f$ can be extended to a $k$-times continuously differentiable function on $U$. 

Recall a function $\varphi\colon\Omega\to\R$ is \emph{weakly differentiable} if there is an integrable function $D\varphi\colon\Omega\to\R^n$, referred to as the weak derivative of $\varphi$, such that
\begin{equation}\label{eq:defweakdiff}
 \int_\Omega \varphi \,D\phi\,dx=-\int_\Omega \phi \, D\varphi\,dx
\end{equation}
for all $\phi\in C_c^\infty(\Omega)$.
The Sobolev space $W^{k,p}(U)$, for $U\subset\R^n$ open, contains all $k$ times weakly-differentiable functions $U\to\R$ with weak derivatives in $L^p(U)$.

Given a function $f\colon A\times B\to\R$, defined on the product of two convex subsets $A$ and $B$ of Euclidean spaces, we say that $f$ is \emph{convex in $A$} if for all $a,a'\in A$ and all $b\in B$ we have
\[f(\lambda a+(1-\lambda)a',b)\leq \lambda f(a,b)+(1-\lambda)f(a',b),\quad \lambda\in [0,1].\]

For projections on product spaces $A\times B$, we will use the notation
\begin{align*}
    &\proj_{A}\colon A\times B\to A,\\
    &\proj_A(a,b)=a,\qquad a\in A,\; b\in B.
\end{align*}

\section{No gap in codimension one}
\label{sec:codim1}
In this section we study the relaxation gap in codimension one in a rather general setting including constraints in the form of nonlinear first-order partial differential equations and inequalities as well as boundary conditions. We do so first for the problem of calculus of variations and then generalize it to optimal control, with the backbone of both results being the superposition principle proved in Theorem~\ref{thm:decomposition}.
\subsection{Formulation for variational calculus problems}
\label{sec:lagrangian}

Let $\Omega$ be a bounded, connected, open subset of $\R^n$ with piecewise $C^1$ boundary $\partial \Omega$ and denote the variables on $\Omega$ by $x=(x_1,\dots,x_n)$. Let  $\sigma$ denote the Hausdorff boundary measure on the piecewise $C^1$ set $\partial \Omega$. %
We also set $Y=\R$ with variable $y$ and $Z=\R^n$ with variables $z=(z_1,\dots,z_n)$. For simplicity, we will sometimes denote $x_{n+1}=y$.

Recall that a function is \emph{locally bounded} if it is bounded on every compact subset of its domain.

 We consider two optimization problems, formulated with the following objects and assumptions:  
\begin{enumerate}[label=CV\arabic*.,ref=CV\arabic*]
    \item \label{U:first}\label{U:lsc} $L\colon\Omega\times Y\times Z\to\R$ and $L_\partial\colon\partial\Omega\times Y\to\R$ are measurable and locally bounded,%
    \item\label{U:bulkconditions} $F,G%
    \colon \Omega\times Y\times Z\to\R$ are measurable functions,
    \item\label{U:boundaryconditions} $F_\partial,G_\partial%
    \colon\partial\Omega\times Y\to\R$ are measurable functions on the boundary,
    \item \label{U:convexity} $L$ is convex in $z$,
    \item \label{U:synthetic} 
     $F^{-1}(0)\cap G^{-1}((-\infty,0])\cap((x,y)\times Z)$ is convex for every $(x,y)\in\Omega\times Y$.
     \item\label{U:last}\label{U:closedconditions}  $F^{-1}(0)\cap G^{-1}((-\infty,0])$ and $F_\partial^{-1}(0)\cap G_\partial^{-1}((-\infty,0])$ are closed.
\end{enumerate}
As an example, here are some simple assumptions that imply \ref{U:first}--\ref{U:last}:
\begin{itemize}
    \item $L,F,G,L_\partial, F_\partial,G_\partial$ are continuous,
    \item $L$ and $G$ are convex in $z$, and
    \item  $F$ satisfies either  of the following two assumptions:
  \begin{enumerate}[label=A\arabic*.,ref=A\arabic*]
   \item \label{a:convex} $F$ is nonnegative and convex in $z$, or
   \item \label{a:affine} $F$ is affine in $z$.
\end{enumerate}
\end{itemize}
The first problem that interests us is the classical one:

\begin{mymathbox}[ams align]\label{opt:classical}
 M_\mathrm{c}= &\!\inf\limits_{y \in W^{1,\infty}(\Omega;Y)}  &\quad &\displaystyle\int_{\Omega} L(x,y(x),D y(x))\, dx + \int_{\partial\Omega} L_\partial(x,y(x))\, d\sigma(x) \\ 
 &\textrm{subject to}& & F(x,y(x),Dy(x)) = 0,\quad  G(x,y(x),Dy(x)) \le 0,\quad \text{a.e. }x\in \Omega,\nonumber \\
 &&& F_\partial(x,y(x)) = 0,\quad \;\hspace{0.9cm} G_\partial(x,y(x)) \le 0, \quad \hspace{0.9cm}\text{a.e. } x\in \partial\Omega, \nonumber
\end{mymathbox}
Observe that ``a.e. $x\in \Omega$'' and ``a.e. $x\in\partial\Omega$'' mean, respectively, almost-everywhere with respect to 
{the} Lebesgue measure on $\Omega$ and with respect to the $(n-1)$-dimensional Hausdorff measure on the boundary $\partial\Omega$. 

We remark that in the case with codimension $m=1$, the assumption \ref{U:convexity} of convexity in $z$ is equivalent to quasi-convexity of $L$, an important property that is in turn equivalent to the lower-semicontinuity of the integral of $L$ in \eqref{opt:classical}, an important property that is used in the direct method to prove the existence of minimizers; see \cite{dacorogna} for details. For this reason, the convexity assumption is a natural one in this context.

The second problem is the occupation-measure relaxation. 

\begin{definition}[Relaxed occupation measures]\label{def:M}
 Let $\mathcal M$ be the set of pairs $(\mu,\mu_\partial)$ consisting of compactly-supported, positive, Radon measures on $\overline\Omega\times Y\times Z$ respectively $\partial\Omega\times Y$ satisfying
 \begin{equation}\label{eq:measureomega}
 \mu(\Omega\times Y\times Z)=|\Omega|,
\end{equation}
and
\begin{equation}\label{eq:boundarymeasure}
  \int_{\Omega\times Y\times Z}\frac{\partial\phi}{\partial x}(x,y)+\frac{\partial \phi}{\partial y}(x,y)z\,d\mu(x,y,z)
  =\int_{\partial\Omega\times Y} \phi(x,y)\mathbf n(x)\,d\mu_\partial(x,y),\quad \phi\in C^\infty(\overline\Omega\times Y).
 \end{equation}
Here $\mathbf n$ denotes the exterior unit vector normal to the boundary $\partial\Omega$.
Note that here $\frac{\partial \phi}{\partial x}$, $\mathbf{n}$ and $z$ are in $\R^n$ and hence for each $\phi$ the above equation is in fact a system of $n$ equations.
 In each pair $(\mu,\mu_\partial)\in \mathcal M$, the measure $\mu$ is referred to as a \emph{relaxed occupation measure} and the measure $\mu_\partial$ as a \emph{relaxed boundary measure}.
 \end{definition}

 Observe that every $(\mu,\mu_\partial)\in \mathcal M$ satisfies
\begin{equation}\label{eq:finitemoment s}
\int_{\Omega\times Y\times Z}\|z\|\,d\mu(x,y,z)<+\infty,
\end{equation}
since $\mu$ is finite and compactly-supported.

The relaxation of problem \eqref{opt:classical} considered in this work is
\begin{mymathbox}[ams align] \label{opt:relaxed}
 M_\mathrm{r}=&\!\inf\limits_{(\mu,\mu_\partial)\in \mathcal M}  &\quad&\displaystyle\int_{\Omega\times Y\times Z} L(x,y,z)\, d\mu(x,y,z) + \int_{\partial\Omega\times Y} L_\partial(x,y)\, d\mu_\partial \\ 
 &\textrm{subject to} %
&& \operatorname{supp}\mu\subset \{(x,y,z)\in\Omega\times Y\times Z:F(x,y,z) = 0,\;\; G(x,y,z) \le 0\},\nonumber\\ 
&&& \operatorname{supp}\mu_\partial\subset \{(x,y)\in\Omega\times Y:F_\partial(x,y) = 0,\;\; G_\partial(x,y) \le 0\}.\nonumber%
\end{mymathbox}
Naturally we have $M_{\mathrm{c}}\geq M_{\mathrm{r}}$ (see the proof of Theorem \ref{thm:nogap}) and the primary goal of this section is to prove that $M_{\mathrm{c}} = M_{\mathrm{r}}$ if \ref{U:first}-\ref{U:last} hold. The main theoretical result of this work that will enable us to establish this is the following result.%

\begin{theorem}\label{thm:decomposition}
 Let $m = \mathrm{dim}\,Y = 1$ and let $\mu$ be a compactly supported, positive, finite, Radon measure on $\overline\Omega\times Y\times Z$ and,
 for all $\phi\in C_c^\infty(\Omega\times Y)$,
\begin{equation}\label{eq:boundarycondition}\int_{\Omega\times Y\times Z}\frac{\partial\phi}{\partial x}(x,y)+\frac{\partial \phi}{\partial y}(x,y)z\,d\mu(x,y,z)=0.
\end{equation}
 Then there are a compactly-supported, finite, positive, Radon measure $\nu$ on $\R$ and a family of  %
 functions $(\varphi_r\colon\Omega\to Y)_{r\in\R}\subset W^{1,\infty}(\Omega)$ such that, for all functions $\phi\in L^1(\mu)$ that are affine in $z$ we have
 \begin{equation}\label{eq:superposition}
  \int_{\Omega\times Y\times Z} \phi\,d\mu=\int_\R\int_\Omega\phi(x,\varphi_r(x),D\varphi_r(x))\,dx\,d\nu(r).
 \end{equation}
 Additionally, if $r\geq r'$ then $\varphi_r(x)\leq \varphi_{r'}(x)$ for all $x\in \Omega$.
\end{theorem}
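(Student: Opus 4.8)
The plan is to reformulate the measure $\mu$ as (the projection of) a normal current in $\Omega\times Y\subseteq\R^{n+1}$ and then apply a Hardt--Pitts-type slicing/decomposition to that current. Concretely, since $m=\dim Y=1$, the graph of a Lipschitz function $\varphi\colon\Omega\to\R$ is an $n$-dimensional object of codimension one in $\R^{n+1}$, so it is natural to encode $\mu$ as an $n$-current $T$ acting on $n$-forms on $\Omega\times Y$. The condition \eqref{eq:boundarycondition} is exactly the statement that a certain push-forward of $\mu$ is a \emph{cycle} (has zero boundary) when tested against compactly supported forms; the finite-mass hypothesis \eqref{eq:finitemoments}-type bound (which here is automatic since $\mu$ is finite and compactly supported) gives that $T$ has finite mass. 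Thus $T$ is a normal current of codimension one, and codimension-one normal cycles in Euclidean space decompose as an integral (over a parameter $r\in\R$) of boundaries of sublevel sets of a $BV$ function $u$, via the coarea formula: $T=\int_{\R}\partial\{u>r\}\,dr$. This is precisely the classical Hardt--Pitts picture, and the role of the paper's promised ``generalization'' is to carry it out with the $z$-variable (the derivative slot) made explicit and with only the weak control \eqref{eq:finitemoments} rather than full regularity.

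First I would make the correspondence precise: given $\mu$, define a vector-valued measure / current by letting $T$ pair an $n$-form $\omega=\sum_\ell a_\ell\,dx_1\wedge\cdots\widehat{dx_\ell}\cdots\wedge dx_n\wedge dy + b\,dx_1\wedge\cdots\wedge dx_n$ with the integral $\int (\text{linear-in-}z\text{ expression})\,d\mu$, chosen so that for $\mu=\mu_\varphi$ one recovers integration of $\omega$ over the graph of $\varphi$ with its natural orientation; the normalization \eqref{eq:measureomega} and the identity \eqref{eq:boundarycondition} then translate into: $T$ is supported in $\overline\Omega\times Y$, $\partial T$ is supported in $\partial\Omega\times Y$ (i.e.\ $T$ restricted to the interior is a cycle), and $T$ has the correct mass on the $x$-projection. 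Because $T$ has codimension one, by the constancy/structure theorem for such currents there is a $BV$ function $u\colon\Omega\to\R$ (an ``antiderivative'' of $T$ in the $y$-direction, obtained by integrating $\mu$ against $\partial_y$-primitives of test functions, which is where hypothesis \eqref{eq:finitemoments}/finiteness is used to guarantee $u\in BV$), and the coarea formula for $BV$ functions (as in \cite{maggi2012sets,evansgariepy}) writes $Du$ — hence $T$ — as a superposition over $r$ of the perimeter measures of the superlevel sets $E_r=\{u>r\}$. After a further mollification/approximation step, the indicator boundaries $\partial E_r$ are replaced by graphs of Lipschitz functions $\varphi_r$ (the ``upper/lower envelopes'' of $E_r$ in the $y$-direction), monotone in $r$ by the nesting $E_r\subseteq E_{r'}$ for $r\ge r'$; this monotonicity is exactly the last sentence of the theorem. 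Setting $\nu$ to be the push-forward of Lebesgue measure on the range of $u$ (or a suitable reparametrization making $\nu$ finite and compactly supported, using that $\mu$ is compactly supported) gives \eqref{eq:superposition} for forms/functions $\phi$ affine in $z$, since both sides are then of the tested ``current'' type and agree by construction.

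The main obstacle I expect is the regularity of the slices: the naive coarea decomposition produces, for a.e.\ $r$, a set $E_r$ of finite perimeter whose reduced boundary need not be a \emph{graph} over $\Omega$, whereas the theorem demands honest functions $\varphi_r\in W^{1,\infty}(\Omega)$. Controlling this requires exploiting that $\mu$ is compactly supported (so $|\varphi_r|$ and $|D\varphi_r|$ are bounded, giving the $W^{1,\infty}$ rather than merely $BV$ conclusion) together with the structure of the constraint \eqref{eq:boundarycondition} forcing, roughly, that the ``vertical'' part of $T$ vanishes — i.e.\ $T$ has no pieces of boundary that are vertical walls $\{(x_0,y):y\in I\}$, which is what would obstruct the graph property. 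Making this rigorous — showing the perimeter measure of $E_r$ projects onto $\Omega$ with multiplicity one and bounded slope for a.e.\ $r$ — is the delicate step; the rest (coarea, monotonicity, passing from $BV$ slices to $W^{1,\infty}$ by the compact-support bound, and the approximation allowing \eqref{eq:superposition} to be tested against all $\phi\in L_1(\mu)$ affine in $z$ by density/monotone convergence) is comparatively routine and relies only on standard $BV$ and geometric-measure-theory facts available in \cite{maggi2012sets,evansgariepy}.
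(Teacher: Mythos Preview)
Your proposal is correct and follows essentially the same route as the paper: the paper constructs a bounded $BV$ function $\rho$ on $\Omega\times Y$ (your $u$) via an explicit primitive-in-$y$ construction and Radon--Nikodym, applies the coarea formula to decompose into sublevel sets $\rho^{-1}(-\infty,r]$, and then devotes the bulk of the work to precisely the obstacle you identify---showing that for almost every $r$ the reduced boundary of the sublevel set has no vertical pieces and is the graph of a $W^{1,\infty}$ function $\varphi_r$. The monotonicity of $\varphi_r$ in $r$ from the nesting of sublevel sets and the $W^{1,\infty}$ bound from compact support of $\mu$ arise exactly as you anticipate.
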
 

Note that \eqref{eq:boundarycondition} is a special case of \eqref{eq:boundarymeasure} when the set of test functions is restricted to $C_c^\infty(\Omega\times Y)$. 

The proof of Theorem \ref{thm:decomposition} presented in Section \ref{sec:proof} follows the arguments given in  \cite{zworski1988decomposition}, although the setting of~\cite{zworski1988decomposition} is different than the one considered here.  Theorem~\ref{thm:decomposition} enables us to prove the following result, which leads immediately to establishing $M_\mathrm{r} = M_\mathrm{c}$:

\begin{theorem}\label{thm:consolidated}
 Assume that $m = \mathrm{dim}\,Y = 1$ and that the functions $L,F,G,L_\partial, F_\partial,G_\partial$ satisfy \ref{U:first}--\ref{U:last}.

 Let $(\mu,\mu_\partial)\in\mathcal M$.
Suppose that the supports of $\mu$ and $\mu_\partial$ satisfy
 \begin{gather}
  \label{eq:Fcond}
  \operatorname{supp}\mu\subset \{(x,y,z)\mid F(x,y,z) = 0, \;G(x,y,z) \le 0\}\\
  \label{eq:Gcond}
  \operatorname{supp}\mu_\partial \subset \{(x,y)\mid F_\partial(x,y) = 0,\; G_\partial(x,y) \le 0\}.
 \end{gather}
 Then we have the following two conclusions:
 \begin{enumerate}[label=\roman*.,ref=(\roman*)]
     \item \label{it:barphi} 
      There is a function $\bar\varphi\in W^{1,\infty}(\bar\Omega)$ such that 
     \begin{gather}\label{eq:barphiL1}
     \int_\Omega L(x,\bar\varphi(x),D\bar\varphi(x))\,dx+\int_{\partial\Omega} L_\partial(x,\bar\varphi(x))\,d\sigma(x)\leq \int_{\Omega\times Y\times Z} L\,d\mu+\int_{\partial\Omega\times Y} L_\partial\,d\mu_\partial,\\
     \label{eq:barphiF}
     F(x,\bar\varphi(x),D\bar\varphi(x))=0,\quad G(x,\bar\varphi(x),D\bar\varphi(x))\le 0\quad\textrm{
      a.e. 
      $x\in\Omega$},\\
     \label{eq:barphiG}
     F_\partial(x,\bar\varphi(x))=0,\quad G_\partial(x,\bar\varphi(x))\leq 0\quad\text{a.e. }
     \textrm{$x\in\partial\Omega$,} %
     \end{gather}
     where $\sigma$ is the $(n-1)$-dimensional Hausdorff measure on $\partial\Omega$.
     \item \label{it:gi} 
     Assume additionally that $L$, $F$, and $G$ are continuous.
     There exists a sequence of functions $(g_i\colon\overline \Omega \to Y)\subset C^\infty(\Omega)\cap W^{1,\infty}(\overline\Omega)$, such that
     \begin{equation}\label{eq:Lineq}
 \lim_{i\to+\infty}\int_\Omega L(x,g_i(x),Dg_i(x))dx+ \int_{\partial\Omega} L_\partial(x,g_i(x))d\sigma(x) \leq \int_{\Omega\times Y\times Z} L\,d\mu+\int_{\partial\Omega\times Y} L_\partial\,d\mu_\partial,
 \end{equation}
 and %
 \begin{gather}
  \label{eq:limF}
  \lim_{i\to+\infty}
  F(x,g_i(x),D g_i(x))=0,\quad  \lim_{i\to+\infty}
  G(x,g_i(x),D g_i(x))\le 0 \quad %
  \text{a.e. }x\in\Omega,\\
  \label{eq:limG}
  F_\partial(x,g_i(x))=0,\quad G_\partial(x,g_i(x))\le 0\quad \text{a.e. }
   x\in\partial\Omega, \;i=1,2,\dots %
 \end{gather}
 \end{enumerate}
\end{theorem}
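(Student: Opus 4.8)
The plan is to disintegrate $\mu$ through a barycentric projection in the $z$ variable, apply the superposition principle of Theorem~\ref{thm:decomposition} to the projection, show that $\nu$-almost every ``sheet'' $\varphi_r$ it produces is feasible for~\eqref{opt:classical} with cost at most that of $(\mu,\mu_\partial)$, select one such sheet as $\bar\varphi$, and obtain the approximants $g_i$ of~\ref{it:gi} by mollification.

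\emph{Step 1 (barycentric projection).} Disintegrate $\mu=\int\delta_{(x,y)}\otimes\rho_{x,y}\,d\mu_{xy}(x,y)$ over its marginal $\mu_{xy}$ on $\Omega\times Y$, put $\bar z(x,y)=\int_Z z\,d\rho_{x,y}(z)$ (finite and measurable since $\operatorname{supp}\mu$ is compact), and let $\bar\mu$ be the push-forward of $\mu_{xy}$ under $(x,y)\mapsto(x,y,\bar z(x,y))$. Then: (i) $(\bar\mu,\mu_\partial)\in\mathcal M$, because the integrands in~\eqref{eq:measureomega} and~\eqref{eq:boundarymeasure} are affine in $z$, so $\bar\mu$ returns the same values as $\mu$; (ii) $\operatorname{supp}\bar\mu\subseteq K:=\{F=0,\ G\le0\}$, because $\operatorname{supp}\mu\subseteq K$ gives $\rho_{x,y}\big(K\cap(\{(x,y)\}\times Z)\big)=1$ for $\mu_{xy}$-a.e.\ $(x,y)$ and this fibre is closed and convex by~\ref{U:closedconditions} and~\ref{U:synthetic}, whence its barycenter $\bar z(x,y)$ lies in it; and (iii) $\int L\,d\bar\mu\le\int L\,d\mu$, by Jensen's inequality applied fibrewise using the convexity of $L$ in $z$~\ref{U:convexity}. (All integrals below are finite since $L,L_\partial$ are locally bounded and every measure is compactly supported.)

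\emph{Step 2 (superposition and the tangency identity).} As $\bar\mu$ satisfies~\eqref{eq:boundarycondition} (restrict~\eqref{eq:boundarymeasure} to $C_c^\infty$), Theorem~\ref{thm:decomposition} furnishes a finite measure $\nu$ on $\R$ and a nonincreasing family $(\varphi_r)_{r\in\R}\subseteq W^{1,\infty}(\Omega)$ satisfying~\eqref{eq:superposition}; testing against $\phi\equiv1$ and using~\eqref{eq:measureomega} gives $\nu(\R)=1$. The additional input I would take from the proof of Theorem~\ref{thm:decomposition} --- this is exactly the content of a Hardt--Pitts decomposition, that each sheet is tangent to the underlying divergence-free field --- is the pointwise identity $D\varphi_r(x)=\bar z(x,\varphi_r(x))$ for a.e.\ $x$ and $\nu$-a.e.\ $r$. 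Granting it, and since $\bar\mu$ is carried by the graph $\{z=\bar z(x,y)\}$, one upgrades~\eqref{eq:superposition} from $z$-affine integrands to every $\phi\in L_1(\bar\mu)$: indeed $\int\phi\,d\bar\mu=\int\phi(x,y,\bar z(x,y))\,d\mu_{xy}=\int_\R\int_\Omega\phi(x,\varphi_r(x),\bar z(x,\varphi_r(x)))\,dx\,d\nu(r)=\int_\R\int_\Omega\phi(x,\varphi_r(x),D\varphi_r(x))\,dx\,d\nu(r)$, the middle equality being~\eqref{eq:superposition} for the $z$-affine function $(x,y,z)\mapsto\phi(x,y,\bar z(x,y))$. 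Equivalently $\bar\mu=\int_\R\lambda_r\,d\nu(r)$, where $\lambda_r$ is the occupation measure of $\varphi_r$. The boundary is handled in parallel: $\varphi_r\in W^{1,\infty}(\Omega)$ has a trace on $\partial\Omega$, and the Gauss--Green formula on the piecewise $C^1$ domain $\Omega$ shows that $(\lambda_r,\lambda_r^\partial)$ satisfies~\eqref{eq:boundarymeasure}, $\lambda_r^\partial$ being the push-forward of $\sigma$ under $x\mapsto(x,\varphi_r(x))$; integrating in $\nu$, $\big(\bar\mu,\int_\R\lambda_r^\partial\,d\nu\big)$ satisfies~\eqref{eq:boundarymeasure}, and comparing with $(\bar\mu,\mu_\partial)$ --- using that the functionals $\phi\mapsto\int\phi\,\mathbf n\,d(\cdot\,)$, $\phi\in C^\infty(\Omega\times Y)$, separate Radon measures on $\partial\Omega\times Y$ --- yields $\mu_\partial=\int_\R\lambda_r^\partial\,d\nu(r)$.

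\emph{Step 3 (feasibility, selection, mollification).} Since $K^c$ is open and $\bar\mu(K^c)=0$, covering $K^c$ by countably many open $\bar\mu$-null sets and using $\bar\mu=\int\lambda_r\,d\nu$ gives $\lambda_r(K^c)=0$ for $\nu$-a.e.\ $r$; that is, $F(x,\varphi_r,D\varphi_r)=0$ and $G(x,\varphi_r,D\varphi_r)\le0$ for a.e.\ $x\in\Omega$. The same argument with $\mu_\partial=\int\lambda_r^\partial\,d\nu$ and the closed set $\{F_\partial=0,\ G_\partial\le0\}$ gives, for $\nu$-a.e.\ $r$, $F_\partial(x,\varphi_r(x))=0$ and $G_\partial(x,\varphi_r(x))\le0$ for $\sigma$-a.e.\ $x\in\partial\Omega$; since $\varphi_r$ is continuous, this set is closed and $\sigma$ has full support on $\partial\Omega$, so the relations hold for \emph{every} $x\in\partial\Omega$. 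For the cost, Step~1 and the upgraded superposition applied to $L$ and $L_\partial$ give
\[
\int L\,d\mu+\int L_\partial\,d\mu_\partial\ \ge\ \int L\,d\bar\mu+\int L_\partial\,d\mu_\partial\ =\ \int_\R\!\Big(\int_\Omega L(x,\varphi_r,D\varphi_r)\,dx+\int_{\partial\Omega}L_\partial(x,\varphi_r)\,d\sigma\Big)d\nu(r),
\]
and since $\nu$ is a probability measure the bracket is $\le$ the left side for $r$ in a set of positive $\nu$-measure, which meets the $\nu$-conull set of feasible $r$; any $r_0$ in the intersection gives $\bar\varphi:=\varphi_{r_0}\in W^{1,\infty}(\overline\Omega)$, proving~\ref{it:barphi}. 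For~\ref{it:gi}, with $L,F,G$ continuous, mollify $\bar\varphi$ to obtain $g_i\in C^\infty(\Omega)\cap W^{1,\infty}(\overline\Omega)$ with $g_i\to\bar\varphi$ uniformly, $Dg_i\to D\bar\varphi$ a.e.\ with uniformly bounded gradients, and $g_i=\bar\varphi$ on $\partial\Omega$ (so~\eqref{eq:limG} holds exactly and the boundary term is unchanged); dominated convergence gives $\int_\Omega L(\cdot,g_i,Dg_i)\to\int_\Omega L(\cdot,\bar\varphi,D\bar\varphi)$, and continuity of $F,G$ gives $F(\cdot,g_i,Dg_i)\to0$ and $\limsup_i G(\cdot,g_i,Dg_i)\le0$ a.e., which with~\ref{it:barphi} establishes~\eqref{eq:Lineq}--\eqref{eq:limG}.

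\emph{Main obstacle.} The heart of the matter is the tangency identity $D\varphi_r(x)=\bar z(x,\varphi_r(x))$ of Step~2 --- that the decomposition of Theorem~\ref{thm:decomposition} genuinely disintegrates $\bar\mu$ into occupation measures and does not merely reproduce its $z$-affine moments. Without it the Jensen inequality of Step~1 points the wrong way (one would only get $\int L\,d\bar\mu\le\int_\R\int_\Omega L(x,\varphi_r,D\varphi_r)\,dx\,d\nu$, useless for selecting a cheap $\varphi_{r_0}$) and the feasibility extraction collapses, so this is precisely where the strength of the Hardt--Pitts-type result is spent. A secondary technical point is the construction of the mollified $g_i$ in~\ref{it:gi}: they must be interior-smooth, Lipschitz up to $\partial\Omega$, carry the exact boundary trace of $\bar\varphi$, and converge to $\bar\varphi$ in the weak-$*$ topology of $W^{1,\infty}$.
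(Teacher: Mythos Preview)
Your proposal is correct and follows essentially the same route as the paper: form the centroid-concentrated measure $\bar\mu$, apply the superposition Theorem~\ref{thm:decomposition}, invoke the tangency identity $D\varphi_r(x)=\bar z(x,\varphi_r(x))$ (which is precisely equation~\eqref{eq:singlepoint} established inside the proof of Theorem~\ref{thm:decomposition}), derive the boundary decomposition via Gauss--Green and the $\phi\mathbf n$ trick, select $\bar\varphi=\varphi_{r_0}$ by averaging, and mollify for part~\ref{it:gi}. Your identification of the tangency identity as the crux is exactly right, and your feasibility argument via $\bar\mu(K^c)=0\Rightarrow\lambda_r(K^c)=0$ for $\nu$-a.e.\ $r$ is a clean variant of the paper's direct convex-hull reasoning.
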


The proof of Theorem \ref{thm:consolidated} is presented in Section \ref{sec:proof2}. This theorem immediately leads to a result on the absence of a relaxation gap between \eqref{opt:classical} and \eqref{opt:relaxed}. %

\begin{theorem}
 \label{thm:nogap}
 Assume that $m = \mathrm{dim}\,Y = 1$ and that the functions $L,F,G,L_\partial, F_\partial,G_\partial$ satisfy \ref{U:first}--\ref{U:last}.
If $M_\mathrm{c}<+\infty$, then \[M_\mathrm{c}=M_\mathrm{r}.\] 
\end{theorem}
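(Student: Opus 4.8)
\textbf{Proof proposal for Theorem~\ref{thm:nogap}.}

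The plan is to derive the equality $M_{\mathrm c}=M_{\mathrm r}$ from the two inequalities $M_{\mathrm c}\geq M_{\mathrm r}$ and $M_{\mathrm c}\leq M_{\mathrm r}$, with the latter being an immediate consequence of Theorem~\ref{thm:consolidated}. First I would establish the easy direction $M_{\mathrm c}\geq M_{\mathrm r}$: given any $y\in W^{1,\infty}(\Omega;Y)$ feasible in~\eqref{opt:classical}, one checks that the pair $(\mu_y,\mu_{\partial,y})$ obtained by pushing forward Lebesgue measure on $\Omega$ under $x\mapsto(x,y(x),Dy(x))$ and the Hausdorff measure on $\partial\Omega$ under $x\mapsto(x,y(x))$ lies in $\mathcal M$. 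The mass condition~\eqref{eq:measureomega} holds by construction, the boundary identity~\eqref{eq:boundarymeasure} is exactly the integration-by-parts/divergence-theorem computation carried out in the introduction (now keeping the boundary term since $\phi\in C^\infty(\Omega\times Y)$ need not vanish on $\partial\Omega$), and the support constraints~\eqref{eq:Fcond}--\eqref{eq:Gcond} follow because $y$ satisfies the pointwise PDE constraints a.e.\ and the relevant sets are closed by~\ref{U:closedconditions}. One subtlety here: a generic $W^{1,\infty}$ function need not be $C^1$, so the graph map is only defined a.e.\ and one should note that the pushforward is still well-defined and the IBP identity still holds for Lipschitz $y$ (this is standard, and the closedness in~\ref{U:closedconditions} guarantees the support lands in the constraint set despite the a.e.\ qualifier). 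Since the objective of~\eqref{opt:relaxed} evaluated at $(\mu_y,\mu_{\partial,y})$ equals the objective of~\eqref{opt:classical} at $y$, taking the infimum over $y$ gives $M_{\mathrm c}\geq M_{\mathrm r}$.

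For the reverse inequality, I would argue as follows. Assuming $M_{\mathrm c}<+\infty$ (so in particular~\eqref{opt:classical} is feasible), fix any $(\mu,\mu_\partial)\in\mathcal M$ feasible for~\eqref{opt:relaxed}, i.e.\ satisfying~\eqref{eq:Fcond}--\eqref{eq:Gcond}. Part~\ref{it:barphi} of Theorem~\ref{thm:consolidated} produces $\bar\varphi\in W^{1,\infty}(\overline\Omega)$ that is feasible for~\eqref{opt:classical} by~\eqref{eq:barphiF}--\eqref{eq:barphiG} and whose cost is at most the cost of $(\mu,\mu_\partial)$ in~\eqref{opt:relaxed} by~\eqref{eq:barphiL1}. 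Hence
\[
M_{\mathrm c}\;\leq\;\int_\Omega L(x,\bar\varphi,D\bar\varphi)\,dx+\int_{\partial\Omega}L_\partial(x,\bar\varphi)\,d\sigma\;\leq\;\int L\,d\mu+\int L_\partial\,d\mu_\partial.
\]
Taking the infimum over all feasible $(\mu,\mu_\partial)$ yields $M_{\mathrm c}\leq M_{\mathrm r}$. (Note this direction uses only part~\ref{it:barphi}, which does not require the continuity assumption; part~\ref{it:gi} would give the alternative approach via $C^\infty$ approximants $g_i$, but since the $g_i$ only satisfy the PDE constraint in the limit they are not exactly feasible for~\eqref{opt:classical}, so using~\ref{it:barphi} is cleaner.) Combining the two inequalities gives $M_{\mathrm c}=M_{\mathrm r}$.

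The main obstacle is entirely located in the already-granted Theorem~\ref{thm:consolidated}, so at the level of this proof there is essentially no obstacle — it is a short deduction. If I had to name the one genuinely non-bookkeeping point, it is verifying that $\mathcal M$ is nonempty and that the infimum $M_{\mathrm r}$ is meaningful: if~\eqref{opt:classical} is infeasible we could have $M_{\mathrm c}=+\infty$ while $M_{\mathrm r}$ is finite, which is why the hypothesis $M_{\mathrm c}<+\infty$ is imposed and used to guarantee there is at least one classical competitor (and hence one relaxed competitor via the forward construction), so both infima are over nonempty sets. Everything else — well-definedness of pushforwards of Lipschitz graph maps, the boundary integration by parts, closedness of constraint sets ensuring support containment — is routine.
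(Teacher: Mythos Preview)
Your proposal is correct and follows essentially the same approach as the paper's own proof: the easy direction $M_{\mathrm c}\geq M_{\mathrm r}$ via pushforward of Lebesgue and boundary measures along the graph of a feasible $y$, and the hard direction $M_{\mathrm c}\leq M_{\mathrm r}$ by invoking Theorem~\ref{thm:consolidated}\ref{it:barphi} to extract from any feasible $(\mu,\mu_\partial)$ a classical competitor $\bar\varphi$ with no greater cost. Your additional remarks on why part~\ref{it:barphi} is preferable to part~\ref{it:gi} here, and on the role of the hypothesis $M_{\mathrm c}<+\infty$, are accurate and go slightly beyond what the paper spells out.
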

\begin{proof}
 Since every function $y\in W^{1,\infty}(\Omega;Y)$ induces measures $(\mu,\mu_\partial)$ by 
 \begin{gather*}
 \int_{\Omega\times Y\times Z}\phi(x,y,z)d\mu(x,y,z)=\int_{\Omega}\phi(x,y(x),Dy(x))\,dx,\quad \phi\in C^0(\Omega\times Y\times Z),\\
 \int_{\partial \Omega\times Y}\phi_\partial(x,y) d\mu_\partial(x,y)=\int_{\partial\Omega}\phi_\partial(x,y(x))\,d\sigma(x),\quad \phi_\partial\in C^0(\partial\Omega\times Y),
 \end{gather*}
 and the pair $(\mu,\mu_\partial)$ satisfies all the constraints of $M_\mathrm{r}$,
 we have $M_\mathrm{r}\leq M_\mathrm{c}$. In order to prove the opposite direction, assume that $(\mu,\mu_\partial)$ is feasible in \eqref{opt:relaxed}. Such $(\mu,\mu_\partial)$ satisfies the assumptions of Theorem~\ref{thm:consolidated} and hence there exists a function $\bar\varphi \in W^{1,\infty}(\overline\Omega)$ satisfying~\eqref{eq:barphiL1}--\eqref{eq:barphiG}. This implies that $\bar\varphi$ is feasible in \eqref{opt:classical} and achieves an objective value no worse than the objective value achieved by $(\mu,\mu_\partial)$ in \eqref{opt:relaxed}.
\end{proof}
\begin{definition}[Centroid and centroid-concentrated measure]
\label{def:centroid}
 Let $\mu$ be a positive Radon measure on $\Omega\times Y\times Z$.
 Denote the marginal measure $(\projXY)_\#\mu$ by $\mu_{\Omega\times Y}$.
 Disintegrate $\mu$ through the projection map $\projXY{}$
to obtain a family of probability measures $(\mu_{xy})_{(x,y)\in\Omega\times Y}$, with $\mu_{xy}$ being a measure on $Z$, such that
\[\mu=\int_{\Omega\times Y}\mu_{xy}\,d \mu_{\Omega\times Y}(x,y).\]
In other words, we have, for measureable $f\colon\Omega\times Y\times Z\to\R$,
\[\int f(x,y,z) d\mu=\int_{\Omega\times Y} \int_Z f(x,y,z)d\mu_{xy}(z)\,d \mu_{\Omega\times Y}(x,y).\]
By \eqref{eq:finitemoment s}, the quantity
\begin{equation}\label{eq:defZ}
 \mathcal Z(x,y)=\int z\,d\mu_{xy}(z)
\end{equation}
is well defined and finite for $(\projXY)_\#\mu$-almost every $(x,y)$; it is referred to as the \emph{centroid} of $\mu$ at $(x,y)$ and can also be thought of as the conditional expectation of the $z$ variable given $(x,y)$. Let $\bar \mu$ be the measure whose projection coincides with that of $\mu$, that is, $(\projXY)_\#\bar\mu=(\projXY)_\#\mu=\mu_{\Omega\times Y}$, and which is concentrated on $\mathcal Z(x,y)$, that is,
\[\bar\mu=\int_{\Omega\times Y}\delta_{\mathcal Z(x,y)}d\mu_{\Omega\times Y}(x,y);\]
this means that, for measurable $f\colon\Omega\times Y\times Z\to\R$, we have
\[\int_{\Omega\times Y\times Z}f(x,y,z)\,d\bar\mu(x,y,z)=\int_{\Omega\times Y}f(x,y,\mathcal Z(x,y))\,d\mu_{\Omega\times Y}(x,y).\]
The measure $\bar\mu$ is \emph{the version of $\mu$ concentrated at its centroid in the z variable}. 
\end{definition}
\begin{remark}
In the absence of the convexity assumptions \ref{U:convexity} and \ref{U:synthetic}, $M_{\mathrm{r}}$ remains the same if we replace $L$ with its convexification $\tilde L$ in $z$, given, for $(x,y,z)\in \Omega\times Y\times Z$, by
 \begin{align*}\tilde L(x,y,z)=\inf \{&\lambda L(x,y,z')+(1-\lambda)L(x,y,z''):\\
 &z=\lambda z'+(1-\lambda)z'',\;\lambda\in[0,1],\;z',z''\in Z,\\
 &F(x,y,z')=0=F(x,y,z''),\;G(x,y,z')\leq 0,\;G(x,y,z'')\leq 0\}.
 \end{align*}
  Indeed, denoting the latter minimum by $\tilde M_\mathrm{r}$, observe that we always have $M_\mathrm{r}\geq \tilde M_\mathrm{r}$ because $L\geq \tilde L$; let us show the opposite inequality. The measure $\bar\mu$ constructed in Definition \ref{def:centroid}, which concentrates the mass of $\mu$ on its centroid $\mathcal Z(x,y)$ in each fiber $(x,y)\times Z$, satisfies
 \[\int L\,d\mu\geq \int \tilde L\,d\mu\geq\int \tilde L\,d\bar\mu.\]

 A new measure $\tilde\mu$ can be constructed that redistributes, on each fiber $(x,y) \times Z$, the mass of $\bar\mu$ on the points where $\tilde L=L$ while maintaining the same centroid; indeed, on each fiber $(x,y)\times Z$ we can pick (for example, using Choquet's theorem) a probability measure $\nu_{(x,y)}$ supported on the extreme points of the facet of $\tilde L$ containing the centroid $\mathcal Z(x,y)$, in such a way that the centroid of $\nu_{(x,y)}$ will again be $\mathcal Z(x,y)$; it can be argued using the Kuratowski--Ryll-Nardzewski Selection Theorem (see \cite[Th. 18.13]{aliprantisborder} or \cite[Th.\ 8.1.3]{aubinfrankowska}) %
 that this choice can be done in such a way as to produce a measurable selection on the set-valued map associating to each $(x,y)\in\Omega\times Y$ the set of probabilities on the extreme points of the facet containing the centroid; to finish the construction, let $\tilde\mu=\int_{\Omega\times Y}\nu_{(x,y)}d(\projXY)_\#\mu(x,y)$.
 Then we have
 \[\int \tilde L\,d\mu=\int \tilde L\,d\tilde\mu=\int L\,d\tilde\mu.\]
 Now, $(\tilde\mu,\mu_\partial)\in\mathcal M$ because condition \eqref{eq:boundarycondition} does not change by the construction of $\tilde\mu$ because integrals of functions linear in $z$ are not affected. Thus we have $M_\mathrm{r}\leq \tilde M_r$, which is what we wanted to show.
\end{remark}

\subsection{Formulation for optimal control}
\label{sec:oc}

In this section we extend the no-gap result of Theorem \ref{thm:nogap} to the context of optimal control. Let $\Omega\subset\R^n$ be a bounded, connected, open set with piecewise $C^1$ boundary $\partial\Omega$ and with boundary measure $\sigma$. Let also $Y=\R$, and $Z=\R^{n}$. Let $U$ and $U_\partial$ be compact topological spaces. 

Let $\projXYZ{}\colon \Omega\times Y\times Z\times U\to \Omega\times Y\times Z$ and $\projdXY{}\colon\partial\Omega\times Y\times U_\partial\to \partial\Omega\times Y$ 
be the projections 
$\projXYZ{}(x,y,z,u)=(x,y,z)$ and $\projdXY{}(x,y,u)=(x,y)$. 

In analogy with \ref{U:first}--\ref{U:last}, we will assume:
\begin{enumerate}[label=OC\arabic*.,ref=OC\arabic*]
    \item \label{OC:first} $L\colon \Omega\times Y\times Z\times U\to\R$ and $L_\partial \colon\partial\Omega\times Y\times U_\partial\to\R$ are measurable and locally bounded functions, %
    \item $F,G\colon \Omega\times Y\times Z\times U\to\R$ are measurable functions,
    \item $F_\partial,G_\partial\colon\partial \Omega\times Y\times U_\partial\to \R$ are measurable functions on the boundary,
     \item\label{OC:3} the function $\bar L\colon \Omega\times Y\times Z\to\R$ defined by
     \begin{equation}\label{eq:defLbar}
      \bar L(x,y,z)=\inf\{L(x,y,z,u):u\in U,\;F(x,y,z,u)=0,\; G(x,y,z,u)\leq 0\}
     \end{equation}
     is measurable, locally bounded, and convex in $z$,
     \item\label{OC:2} $\projXYZ{}(F^{-1}(0)\cap G^{-1}((-\infty,0]))\cap((x,y)\times Z)$ is convex for every $(x,y)\in\Omega\times Y$.
     \item \label{OC:last}  $F^{-1}(0)\cap G^{-1}((-\infty,0])$ and $F_\partial^{-1}(0)\cap G^{-1}_\partial((-\infty,0])$ are closed.
\end{enumerate}
Assumption \ref{OC:2} amounts to the set of permissible points being convex on each fiber $Z$, once we project with $\projXYZ{}$. For a concrete application satisfying these assumptions, refer to Example \ref{ex:affine}.

We want to consider the following two optimization problems: first, the classical multivariable optimal control problem

\begin{alignat}{2} \label{opt:classical_cont}
 M_\mathrm{c}^\mathrm{oc}= &\!\inf\limits_{\substack{y \in W^{1,\infty}(\Omega;Y)\\u \in L^{\infty}(\Omega;U)\\u_\partial \in L^\infty(\partial\Omega;U_\partial)}}  &\quad &\displaystyle\int_{\Omega} L(x,y(x),D y(x),u(x))\, dx + \int_{\partial\Omega} L_\partial(x,y(x),u_\partial(x))\, d\sigma(x) \\ 
 &\textrm{subject to}& & F(x,y(x),Dy(x),u(x)) = 0,\quad  G(x,y(x),Dy(x),u(x)) \le 0,\quad \text{a.e. }x\in \Omega,\nonumber \\
 &&& F_\partial(x,y(x),u_\partial(x)) = 0,\quad \;\hspace{0.725cm} G_\partial(x,y(x),u_\partial(x)) \le 0, \quad \hspace{0.725cm} \text{a.e. }x\in \partial\Omega, \nonumber
\end{alignat}
and its relaxation
\begin{alignat}{2} \label{opt:relaxed_cont}
 M_\mathrm{r}^\mathrm{oc}=
 &\!\inf\limits_{(\mu,\mu_\partial)\in \mathcal M^\mathrm{oc}}  &\quad&\displaystyle\int_{\Omega\times Y\times Z\times U} L(x,y,z,u)\, d\mu(x,y,z,u) + \int_{\partial\Omega\times Y\times U_\partial} L_\partial(x,y,u)\, d\mu_\partial(x,y,u) \\ 
 &\textrm{subject to}%
&& \operatorname{supp}\mu\subset \{(x,y,z,u)\in\Omega\times Y\times Z\times U:F(x,y,z,u) = 0,\;\; G(x,y,z,u) \le 0\},\nonumber\\ 
&&& \operatorname{supp}\mu_\partial\subset \{(x,y,u)\in\partial\Omega\times Y\times U_\partial:F_\partial(x,y,u) = 0,\;\; G_\partial(x,y,u) \le 0\},\nonumber%
\end{alignat}
where $\mathcal M^\mathrm{oc}$ denotes the set of pairs $(\mu,\mu_\partial)$ consisting of compactly-supported positive Borel measures on $\overline\Omega\times Y\times Z\times U$ respectively $\partial\Omega\times Y\times U_\partial$ satisfying
\begin{gather}
\label{eq:mass_cont}
  \mu(\overline\Omega\times Y\times Z\times U)=|\Omega|,
\end{gather}
 and
\begin{equation}
  \label{eq:boundarymeasure_cont}
  \int_{\overline\Omega\times Y\times Z\times U}\frac{\partial\phi}{\partial x}(x,y)+\frac{\partial\phi}{\partial y}(x,y)z\,d\mu(x,y,z,u)
  =\int_{\partial\Omega\times Y\times U_\partial}\hspace{-8mm}\phi(x,y)\mathbf n(x)\,d\mu_\partial(x,y,u),\;\; \phi\in C^\infty(\Omega\times Y),
\end{equation}
which are the analogies of \eqref{eq:measureomega} and \eqref{eq:boundarymeasure}. Note that none of these conditions \eqref{eq:mass_cont}--\eqref{eq:boundarymeasure_cont} substantially involves the control set $U$, and they correspond to the hypotheses of Theorem \ref{thm:consolidated} and Theorem \ref{thm:nogap}. 

\begin{theorem}\label{cor:oc}
 If $M^\mathrm{oc}_\mathrm{c}$ is finite and \ref{OC:first}--\ref{OC:last} hold, then $M^\mathrm{oc}_\mathrm{c}=M^\mathrm{oc}_\mathrm{r}$.
\end{theorem}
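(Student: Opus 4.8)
The plan is to eliminate the control variables, reducing \eqref{opt:classical_cont}--\eqref{opt:relaxed_cont} to a variational pair of the form \eqref{opt:classical}--\eqref{opt:relaxed} to which Theorem~\ref{thm:consolidated} applies (legitimately, since $m=\dim Y=1$), and then to lift the Sobolev function it produces back to an admissible control by a measurable selection. The easy inequality $M^\mathrm{oc}_\mathrm{r}\le M^\mathrm{oc}_\mathrm{c}$ goes exactly as in the proof of Theorem~\ref{thm:nogap}: a classically admissible triple $(y,u,u_\partial)$ induces a pair in $\mathcal M^\mathrm{oc}$, feasible in \eqref{opt:relaxed_cont} with the same objective, by pushing Lebesgue measure on $\Omega$ forward along $x\mapsto(x,y(x),Dy(x),u(x))$ and $\sigma$ on $\partial\Omega$ forward along $x\mapsto(x,y(x),u_\partial(x))$. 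If $M^\mathrm{oc}_\mathrm{r}=+\infty$ the claim is trivial, so it remains to show $M^\mathrm{oc}_\mathrm{c}\le M^\mathrm{oc}_\mathrm{r}$.

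To set up the reduction I would put $\bar L_\partial(x,y)=\inf\{L_\partial(x,y,u):F_\partial(x,y,u)=0,\ u\in U_\partial\}$ in analogy with $\bar L$ from \ref{OC:3}, and let
\[S=\projXYZ\big(F^{-1}(0)\cap G^{-1}((-\infty,0])\big),\qquad S_\partial=\projdXY\big(F_\partial^{-1}(0)\cap G_\partial^{-1}((-\infty,0])\big),\]
which are closed because \ref{OC:last} holds and $U,U_\partial$ are compact (the projection along a compact factor of a closed set is closed). As variational data I would take $\bar L,\bar L_\partial$ together with $\bar F:=\operatorname{dist}(\cdot,S)$, $\bar G:=0$, $\bar F_\partial:=\operatorname{dist}(\cdot,S_\partial)$, $\bar G_\partial:=0$; these satisfy \ref{U:first}--\ref{U:last}, since the distance functions are continuous (hence \ref{U:bulkconditions}--\ref{U:boundaryconditions}), $\bar L$ is measurable, locally bounded and convex in $z$ by \ref{OC:3} (giving \ref{U:lsc} and \ref{U:convexity}; the same for $\bar L_\partial$ follows from a standard projection/measurable-selection argument using compactness of $U_\partial$), $\bar F^{-1}(0)\cap\bar G^{-1}((-\infty,0])=S$ has convex $z$-fibers by \ref{OC:2} (giving \ref{U:synthetic}), and $S,S_\partial$ are closed (giving \ref{U:closedconditions}).

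Now take $(\mu,\mu_\partial)$ feasible in \eqref{opt:relaxed_cont} and push it forward to $\hat\mu:=(\projXYZ)_\#\mu$, $\hat\mu_\partial:=(\projdXY)_\#\mu_\partial$. Because the integrands in \eqref{eq:boundarymeasure_cont} do not involve $u$, pushing that identity forward gives \eqref{eq:boundarymeasure}, and the total mass is preserved, so $(\hat\mu,\hat\mu_\partial)\in\mathcal M$; moreover $\operatorname{supp}\hat\mu\subseteq S=\bar F^{-1}(0)$ and $\operatorname{supp}\hat\mu_\partial\subseteq S_\partial=\bar F_\partial^{-1}(0)$, so the support hypotheses \eqref{eq:Fcond}--\eqref{eq:Gcond} of Theorem~\ref{thm:consolidated} hold. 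Disintegrating $\mu$ along $\projXYZ$, for $\hat\mu$-a.e.\ $(x,y,z)$ the fibre measure is a probability supported where $F(x,y,z,\cdot)=0$ and $G(x,y,z,\cdot)\le0$, so its $L$-average is at least $\bar L(x,y,z)$; integrating, $\int L\,d\mu\ge\int\bar L\,d\hat\mu$, and likewise $\int L_\partial\,d\mu_\partial\ge\int\bar L_\partial\,d\hat\mu_\partial$. Theorem~\ref{thm:consolidated}\ref{it:barphi} then produces $\bar\varphi\in W^{1,\infty}(\overline\Omega)$ with $(x,\bar\varphi(x),D\bar\varphi(x))\in S$ a.e.\ on $\Omega$, $(x,\bar\varphi(x))\in S_\partial$ on $\partial\Omega$, and
\[\int_\Omega\bar L(x,\bar\varphi,D\bar\varphi)\,dx+\int_{\partial\Omega}\bar L_\partial(x,\bar\varphi)\,d\sigma\ \le\ \int\bar L\,d\hat\mu+\int\bar L_\partial\,d\hat\mu_\partial\ \le\ \int L\,d\mu+\int L_\partial\,d\mu_\partial.\]

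The remaining step is to turn $\bar\varphi$ into an admissible control, and I expect this to be the main obstacle. For a.e.\ $x$, the inclusion $(x,\bar\varphi(x),D\bar\varphi(x))\in S$ makes $U(x):=\{u\in U:F(x,\bar\varphi(x),D\bar\varphi(x),u)=0,\ G(x,\bar\varphi(x),D\bar\varphi(x),u)\le0\}$ a nonempty compact set, while $\bar L(x,\bar\varphi(x),D\bar\varphi(x))$ is, by \ref{OC:3}, the infimum of $L(x,\bar\varphi(x),D\bar\varphi(x),\cdot)$ over the possibly larger set $\{u:F=0\}$. Using an Aumann--von Neumann-type measurable selection on these multifunctions I would, for each $\varepsilon>0$, choose measurable $u\colon\Omega\to U$ with $u(x)\in U(x)$ for a.e.\ $x$ and $L$ nearly attaining $\bar L$ along $\bar\varphi$, and similarly $u_\partial\colon\partial\Omega\to U_\partial$ respecting the boundary constraints and nearly attaining $\bar L_\partial$; compactness of $U,U_\partial$ gives $u\in L^\infty(\Omega;U)$ and $u_\partial\in L^\infty(\partial\Omega;U_\partial)$. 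Then $(\bar\varphi,u,u_\partial)$ is admissible in \eqref{opt:classical_cont} with objective at most $\int L\,d\mu+\int L_\partial\,d\mu_\partial+\varepsilon(|\Omega|+\sigma(\partial\Omega))$, and letting $\varepsilon\downarrow0$ and infimizing over $(\mu,\mu_\partial)$ yields $M^\mathrm{oc}_\mathrm{c}\le M^\mathrm{oc}_\mathrm{r}$. The delicate point is that the selected control must be admissible for $F$ \emph{and} $G$ at once --- which is possible precisely because $\bar\varphi$ was forced to take values in $S$ --- while simultaneously pulling $L$ down to within $\varepsilon$ of the value function $\bar L$ of \ref{OC:3} (whose infimum ignores $G$); reconciling these, together with the joint measurability needed for the selection theorem and the measurability and local boundedness of $\bar L_\partial$ (the boundary counterpart of the hypothesis \ref{OC:3}), is where \ref{OC:3} and \ref{OC:2} are really used.
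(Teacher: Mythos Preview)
Your approach is essentially the paper's: project out the control by passing to $\bar L,\bar L_\partial$ and the projected constraint sets $S,S_\partial$, verify \ref{U:first}--\ref{U:last} for these reduced data, and invoke the codimension-one variational result. The only real difference is packaging. The paper works purely at the level of optimal values, closing a circular chain $M^{\mathrm{oc}}_{\mathrm c}\le\cdots=M_{\mathrm c}=M_{\mathrm r}=\cdots\le M^{\mathrm{oc}}_{\mathrm r}\le M^{\mathrm{oc}}_{\mathrm c}$ via Theorem~\ref{thm:nogap} together with the identity $\inf_u\int_\Omega L=\int_\Omega\bar L$, so no explicit selector is ever produced; you instead fix a feasible $(\mu,\mu_\partial)$, push it forward, apply Theorem~\ref{thm:consolidated} to obtain a concrete $\bar\varphi$, and then lift $\bar\varphi$ to an admissible triple by an Aumann-type measurable selection. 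Both routes are sound; yours is more constructive, the paper's is shorter. Your choice of $\bar F=\operatorname{dist}(\cdot,S)$ in place of the paper's $\bar F=\chi_S$ is immaterial, since only the zero set of $\bar F$ enters \ref{U:synthetic}--\ref{U:closedconditions}. Finally, the ``delicate point'' you flag---that the infimum defining $\bar L$ in \ref{OC:3} is taken over $\{F=0\}$ alone, whereas the selected control must also satisfy $G\le 0$---is a genuine subtlety in the assumptions that the paper's chain (at the step $\inf_u\int L=\int\bar L$) glosses over in exactly the same way; it is not a defect of your reduction.
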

\begin{proof}
 We always have $M^\mathrm{oc}_\mathrm{r}\leq M^\mathrm{oc}_\mathrm{c}$ because every  $(y_0,u_0,v_0)\in W^{1,\infty}(\Omega;Y)\times L^{\infty}(\Omega;U)\times L^\infty(\partial\Omega;U_\partial)$ induces pairs of measures $(\mu,\mu_\partial)\in \mathcal M^\mathrm{oc}$ by
\[\int_{\overline\Omega\times Y\times Z\times U}\phi(x,y,z,u)\,d\mu(x,y,z,u)=\int_{\Omega}\phi(x,y_0(x),Dy_0(x),u_0(x))\,dx,\quad \phi\in C^0(\Omega\times Y\times Z\times U),\]
and 
\[ \int_{\partial\Omega\times Y\times U_\partial}\phi(x,y,u)\,d\mu_\partial(x,y,u)=\int_{\partial\Omega}\phi(x,y_0(x),v_0(x))\,d\sigma(x),\quad \phi\in C^0(\Omega\times Y\times U_\partial),\]
and they satisfy \eqref{eq:mass_cont}--\eqref{eq:boundarymeasure_cont}.

 Define $\bar L$ as in \eqref{eq:defLbar} and similarly,
 \[\bar L_\partial(x,y)=\inf \{L_\partial(x,y,u):u\in U,\; F_\partial(x,y,u)=0,\;G_\partial(x,y,u)\leq 0\},\quad (x,y)\in \partial\Omega\times Y.\]
 Then because of the local boundedness of $L_\partial$ and the compactness of $U_\partial$, $\bar L_\partial\colon \partial\Omega\times Y\to \R$ is locally bounded and measurable. We will use the functions $\bar L$ and $\bar L_\partial $ to reduce the optimal control problem to the variational calculus problem from Section \ref{sec:lagrangian}.
 
 The sets 
 \[\projXYZ{}(F^{-1}(0)\cap G^{-1}((-\infty,0]))\quad\textrm{and} \quad\projdXY{}(F_\partial^{-1}(0)\cap G_\partial^{-1}((-\infty,0]))\]
 are closed. We explain why this is true for the former, the latter being similar. For every compact set $K\subset \Omega\times Y\times Z$, the set $(K\times U)\cap (F^{-1}(0)\cap G^{-1}((-\infty,0])$ is compact, so its image under the continuous map $\projXYZ{}$ is compact, and it equals $K\cap\projXYZ{}(F^{-1}(0)\cap G^{-1}((-\infty,0])) $. Thus $\projXYZ{}(F^{-1}(0)\cap G^{-1}((-\infty,0]))$ is a set whose intersection with every compact set is compact, so it must be closed.

In order to reduce the optimal control problem to the variational calculus one considered in Section \ref{sec:lagrangian}, we will need functions that encode the admissibility conditions. Let
\begin{align*}
 \bar F(x,y,z)&=1-\chi_{\projXYZ{}(F^{-1}(0)\cap G^{-1}((-\infty,0]))}(x,y,z),\quad (x,y,z)\in \Omega\times Y\times Z,\\
 \bar F_\partial(x,y)&=1-\chi_{\projdXY{}(F_\partial^{-1}(0)\cap G_\partial^{-1}((-\infty,0]))}(x,y),\quad (x,y)\in\partial\Omega\times Y,
 \end{align*}
 as well as $\bar G=0=\bar G_\partial$.

 Consider problems \eqref{opt:classical} and \eqref{opt:relaxed} with $L,F,G,L_\partial,F_\partial,G_\partial$ replaced by $\bar L,\bar F,\bar G,\bar L_\partial,\bar F_\partial,\bar G_\partial$; since assumptions \ref{OC:first}--\ref{OC:last} imply the corresponding assumptions \ref{U:first}--\ref{U:last}, and since $M_\mathrm{c}^\mathrm{oc}<+\infty$ on the optimal control side implies $M_\mathrm{c}<+\infty$ on the variational side, we have, by Theorem \ref{thm:nogap}, $M_{\mathrm{c}}=M_{\mathrm{r}}$ on the variational side.
 Denote by  
 \[I_1\coloneqq\int_{\partial\Omega}\bar L_\partial(x,\bar\varphi(x))\,d\sigma(x)\]
 and by
 \[I_2\coloneqq\int_{\partial\Omega\times Y\times U_\partial }\bar L_\partial(x,y)\,d\mu_\partial(x,y,u)\leq \int_{\partial\Omega\times Y\times U_\partial}L_\partial(x,y,u)\,d\mu_\partial(x,y,u).\]
 We have (omitting for brevity the conditions on $\bar\varphi$ and $\mu$ as in  \eqref{opt:classical} and \eqref{opt:classical_cont} for lines involving $\bar L$, and  as in \eqref{opt:classical_cont} and \eqref{opt:relaxed_cont} for lines involving $L$),
 \begin{align*}
     M^\mathrm{oc}_\mathrm{c}
     &\leq \inf_{\bar\varphi\in W^{1,\infty}(\Omega;Y)}\inf_{u\in L^\infty(\Omega; U)}\int_\Omega L(x,\bar\varphi(x),D\bar\varphi(x),u(x))\,dx+I_1\\
      &=\inf_{\bar\varphi\in W^{1,\infty}(\Omega;Y)}
      \int_\Omega \bar L(x,\bar\varphi(x),D\bar\varphi(x))\,dx+I_1\\
      &=M_\mathrm{c}\\
      &=M_\mathrm{r}\\
      &=\inf_{(\mu,\mu_\partial)\in \mathcal M}\int_{\Omega\times Y\times Z}\bar L\,d\mu+I_2\\
      &=\inf_{(\mu,\mu_\partial)\in \mathcal M^\mathrm{oc}}\int_{\Omega\times Y\times Z}\bar L\,d(\projXYZ{})_\#\mu+I_2\\
      &=\inf_{(\mu,\mu_\partial)\in \mathcal M^\mathrm{oc}}\int_{\Omega\times Y\times Z}\bar L\circ\projXYZ{}\,d\mu+I_2\\
      &\leq \inf_{(\mu,\mu_\partial)\in \mathcal M^\mathrm{oc}}\int_{\Omega\times Y\times Z\times U} L\,d\mu+I_2\\
      &= M^\mathrm{oc}_\mathrm{r}\\
      &\leq \inf_{\substack{y\in W^{1,\infty}(\Omega;Y)\\u\in L^\infty(\Omega;U)}}\int_\Omega L(x,y(x),Dy(x),u(x))\,dx+I_2\\
      &=M^\mathrm{oc}_\mathrm{c}.\qedhere
 \end{align*}
\end{proof}

\begin{example}[Affine control of the derivatives]
\label{ex:affine}
 Consider an optimal control problem in which a relation of the form
 \[Dy(x)=v(x,y,u)\]
 must be enforced. Assume that $v\colon \Omega\times Y\times U\to Z$ is such that $u\mapsto v(x,y,u)$ is affine and invertible for each pair $(x,y)$. Then we may encode the relation above by letting 
 \[F(x,y,z,u)=z-v(x,y,u).\]
 The effective Lagrangian $\bar L$ is then simply
 \[\bar L(x,y,z)=L(x,y,z,(v(x,y,\cdot))^{-1}(z)).\]
 If $L$ is continuous and convex in $z$ and $v$ is continuous, then $\bar L$ is continuous and convex in $z$ as well.
 With $F$ defined as above, and assuming for simplicity that $F_\partial=G_\partial=G=0$, then \ref{OC:first}--\ref{OC:last} are true.
\end{example}

\subsection{Proof of Theorem \ref{thm:decomposition} }\label{sec:proofdecomposition}

Now we come to the proof of Theorem~\ref{thm:decomposition}. We start by illustrating the main steps of the proof on a simple example.

\subsubsection{Overview of the proof of Theorem \ref{thm:decomposition}}
\label{sec:overview}

To fix ideas, let us show how the proof of Theorem \ref{thm:decomposition} works in the very simple case when $\Omega=[0,1]\subset\R$, $Y = \R$, $Z = \R$, and $\mu$ is induced by a $C^1$ curve $\gamma\colon\Omega\to Y$, so that it is given by
\[\int_{\Omega\times Y\times Z}f(x,y,z)\,d\mu(x,y,z)=\int_0^1f(x,\gamma(x),\gamma'(x))\,dx,\quad f\in C^0(\Omega\times Y\times Z).\]
In this case, Lemma \ref{lem:radon-nikodym} will confirm that the projection of $\mu$ onto $\Omega$ is a multiple of 
{the} Lebesgue measure (it is just $dx|_{[0,1]}$). We will then use a trick involving the computation of the circulation ${\mathcal C_\mu}(X)$ of vector fields $X$ and its relation to a linear functional $S\colon C^0(\Omega\times Y)\to \R$ that will be related by the fundamental identity (Lemma \ref{lem:dividentity})
\begin{equation}\label{eq:fundamental}
{\mathcal C_\mu}(X)=-S(\operatorname{div}X)
\end{equation}
and will give us, by the Radon-Nikodym theorem (see Lemma \ref{lem:radon-nikodym}), a function $\rho\colon\Omega\times Y\to\R$ that heuristically has the property that
\[\textrm{``}(\projXY)_\#\mu=-\frac{\partial\rho}{\partial y}.\textrm{''}\]
Thus in our example (see Figure \ref{fig:10}), 
\[\rho(x,y)=\begin{cases}
 -1,&y\geq \gamma(x),\\
 0,&y< \gamma(x).
 \end{cases}
 \]
 \begin{figure}
\includegraphics[width=13.5cm]{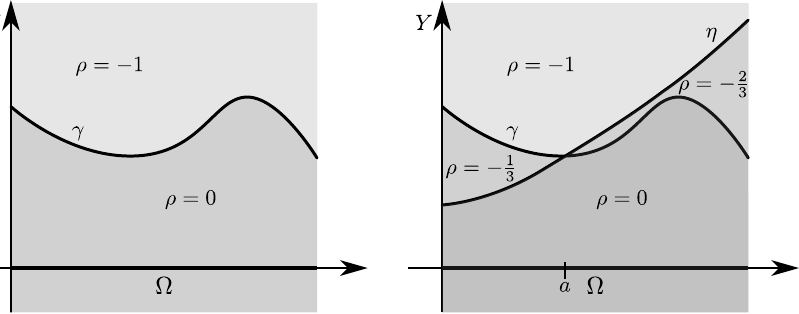}
\centering
\caption{The left-hand side diagram illustrates the values of $\rho$ when $\mu$ is induced by a single curve $\gamma$. In the right-hand side, we illustrate the case in which $\mu$ is the convex combination $\mu=\frac23\mu_1+\frac13\mu_2$ and $\mu_1$ and $\mu_2$ are measures induced by two curves, $\gamma$ and $\eta$, respectively. }
\label{fig:10}
\end{figure}

 After checking that $\rho$ is bounded (Lemma \ref{lem:boundedrange}), we will use the function $\rho$ to define the functions $\varphi_r$ (commonly known as \emph{sheets}) that will give the decomposition of $\mu$. This is done in Lemma \ref{lem:main}. Lemma \ref{lem:function} shows that $\varphi_r$ roughly corresponds to the boundary of a level set of $\rho$, and that it is ``almost continuous,'' and Lemma \ref{lem:weakderivative} shows that it is weakly differentiable; these two lemmas are used to prove Lemma \ref{lem:main}. The proof of Theorem \ref{thm:decomposition}, presented at the end of Section \ref{sec:proof}, relies on the fundamental identity \eqref{eq:fundamental}, together with the technical details from Lemma \ref{lem:main}. 
 
 In our example, the decomposition of Theorem \ref{thm:decomposition} gives the measure $\nu$ equal to 
{the} Lebesgue measure on ${\mathcal R}=[-1,0]$, and 
 \[\varphi_{r}(x)=\inf_{\substack{y\in Y\\\rho(x,y)\leq r}}y=\gamma(x),\quad r\in [-1,0),\]
 so that, indeed,
 \begin{multline*}
  \int_{\Omega\times Y\times Z} f\,d\mu
  =\int_{{\mathcal R}}\int_\Omega f(x,\varphi_r(x),D\varphi_r(x))\,dx\,d\nu(r)\\
  =\int_{-1}^0\int_0^1 f(x,\gamma(x),\gamma'(x))\,dx\,d\nu(r)=\int_0^1 f(x,\gamma(x),\gamma'(x))\,dx.
 \end{multline*}
 
 Another example, illustrated as well in Figure \ref{fig:10}, is the case in which $\mu=\tfrac23\mu_1+\tfrac13\mu_2$, and $\mu_1$ and $\mu_2$ are the measures induced by curves $\gamma$ and $\eta$, and say that $\gamma\geq \eta$ on $[0,a]$ and $\gamma<\eta$ on $(a,1]$, for some $0<a<1$. In this case, 
 \[\rho(x,y)=\begin{cases}
  0,&y<\gamma(x)\;\textrm{and}\;y<\eta(x),\\
  -\tfrac13,&\eta(x)\leq y<\gamma(x),\\
  -\tfrac23, &\gamma(x)\leq y<\eta(x),\\
  -1,&y\geq \gamma(x)\;\textrm{and}\;y\geq\eta(x).
 \end{cases}
 \]
 Similarly,
 \[\varphi_r(x)=\begin{cases}
    \gamma(x),&\textrm{($-1<r<-\tfrac13$ and $0<x<a$) or ($-\tfrac23<r<0$ and $a<x<1$)},\\
    \eta(x),&\textrm{($-\tfrac13<r<0$ and $0<x<a$) or ($-1<r<-\tfrac23$ and $a<x<1$)}.
 \end{cases}
 \]

\subsubsection{Proof of Theorem \ref{thm:decomposition}}
\label{sec:proof}

We collect some lemmas needed in the proof of the theorem, which is presented at the end of the section.
Throughout this section, we assume that $\mu$ is a non-zero measure satisfying the hypotheses of Theorem \ref{thm:decomposition}

\begin{lemma}\label{lem:lebesgueproj}
 If $\projX{}\colon \Omega\times Y\times Z\to\Omega$ is the projection, then there is $c>0$ such that
 \[(\projX{})_{\#}\mu=c\,dx.\]
 In other words, the pushforward $(\projX{})_{\#}\mu$ is a positive multiple of the Lebesgue measure on $\Omega$.
\end{lemma}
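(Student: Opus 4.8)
The plan is to test the defining condition \eqref{eq:boundarycondition} against test functions that do not depend on the $y$-variable, so that the $z$-dependent term drops out and we learn that the $x$-derivative of the pushforward vanishes weakly. Concretely, first I would fix an index $\ell$ and a function $\psi\in C_c^\infty(\Omega)$, and apply \eqref{eq:boundarycondition} to $\phi(x,y)=\psi(x)$, viewed as an element of $C_c^\infty(\Omega\times Y)$ (it is compactly supported in $\Omega$ in the $x$-variables and constant — hence harmless — in $y$; strictly one should multiply by a cutoff in $y$ that equals $1$ on the $y$-projection of $\operatorname{supp}\mu$, which is legitimate since $\mu$ is compactly supported). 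Then $\partial\phi/\partial y_i=0$, so \eqref{eq:boundarycondition} reduces to
\[
\int_{\Omega\times Y\times Z}\frac{\partial\psi}{\partial x_\ell}(x)\,d\mu(x,y,z)=0,\qquad \ell=1,\dots,n.
\]
Rewriting the left-hand side as $\int_\Omega \frac{\partial\psi}{\partial x_\ell}\,d(\projX{})_\#\mu$, this says precisely that the finite positive measure $\lambda:=(\projX{})_\#\mu$ on $\Omega$ has vanishing distributional partial derivatives of all orders one, i.e. $D\lambda=0$ in $\mathcal D'(\Omega)$.

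The next step is the standard fact that a distribution (a fortiori a Radon measure) on a connected open set with vanishing gradient is constant. I would invoke this: mollify $\lambda$ to get $\lambda_\varepsilon\in C^\infty$ with $\nabla\lambda_\varepsilon=0$ on the set where the mollification is defined, conclude $\lambda_\varepsilon$ is locally constant, hence constant on each connected component, and pass to the limit $\lambda_\varepsilon\to\lambda$ in $\mathcal D'$; since $\Omega$ is connected by hypothesis, the limit is a single constant $c$, so $\lambda=c\,dx$ as measures. Positivity of $\mu$ forces $c\ge 0$, and $c>0$ because otherwise $\mu$ would be the zero measure (its $x$-marginal would vanish), contradicting that $\mu$ is a genuine measure in the setting of the theorem — if one wants to be careful about the degenerate case $\mu=0$, note that the conclusion $c>0$ is only needed when $\mu\ne 0$, and the statement as used downstream tolerates this, or one simply adds $\mu\ne 0$ to the reading of the hypotheses. (In the application, $\mu$ arises from Definition \ref{def:M} with $\mu(\Omega\times Y\times Z)=|\Omega|$, which pins down $c=1$.)

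I do not expect any serious obstacle here; this is a soft argument. The only mild technical point is the justification that one may use $y$-independent test functions despite $C_c^\infty(\Omega\times Y)$ nominally requiring compact support in all variables — resolved, as noted, by multiplying by a $y$-cutoff adapted to the compact support of $\mu$ and observing the extra terms vanish because the cutoff is $1$ on $\operatorname{supp}\mu$. After that, "vanishing weak gradient on a connected open set implies constant" and "positive measure $\Rightarrow$ nonnegative constant" are entirely routine.
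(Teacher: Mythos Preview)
Your argument is correct and is in fact cleaner than the paper's. The paper proves translation invariance of $(\projX)_\#\mu$ directly: given a rectangle $R\subset\Omega$ and an axis-parallel translation $\tau_i$ in direction $x_{j_i}$, it builds the primitive
\[
\phi_i(x)=\int_{-\infty}^{x_{j_i}}\big(\chi_{\tau_i(R)}-\chi_{R}\big)\,ds,
\]
which is compactly supported in $\Omega$ (and $y$-independent), approximates it by smooth functions, and reads off $(\projX)_\#\mu(\tau_i(R))=(\projX)_\#\mu(R)$ from the $j_i$-th component of \eqref{eq:boundarycondition}; general translations are handled by composing axis-parallel ones, and then Rudin's characterization of Lebesgue measure by translation invariance yields the conclusion. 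Your route bypasses all of this by testing \eqref{eq:boundarycondition} against $\phi(x,y)=\psi(x)\chi(y)$ with $\chi\equiv 1$ on the $y$-support of $\mu$, obtaining $\int_\Omega\partial_{x_\ell}\psi\,d(\projX)_\#\mu=0$ for all $\psi\in C_c^\infty(\Omega)$, and then invoking the standard fact that a distribution with vanishing gradient on a connected open set is a constant multiple of Lebesgue measure. The trade-off: your argument is shorter and avoids the dominated-convergence approximation of nonsmooth test functions, at the price of citing a (routine) distribution-theory lemma; the paper's argument is more hands-on but still needs an external characterization of Lebesgue measure. Your remark about $c>0$ versus $c\ge 0$ is also apt --- the paper's proof is silent on this point, and indeed strict positivity only follows once one knows $\mu\ne 0$ (guaranteed in the application by \eqref{eq:measureomega}).
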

\begin{figure}
\includegraphics[width=4.5cm]{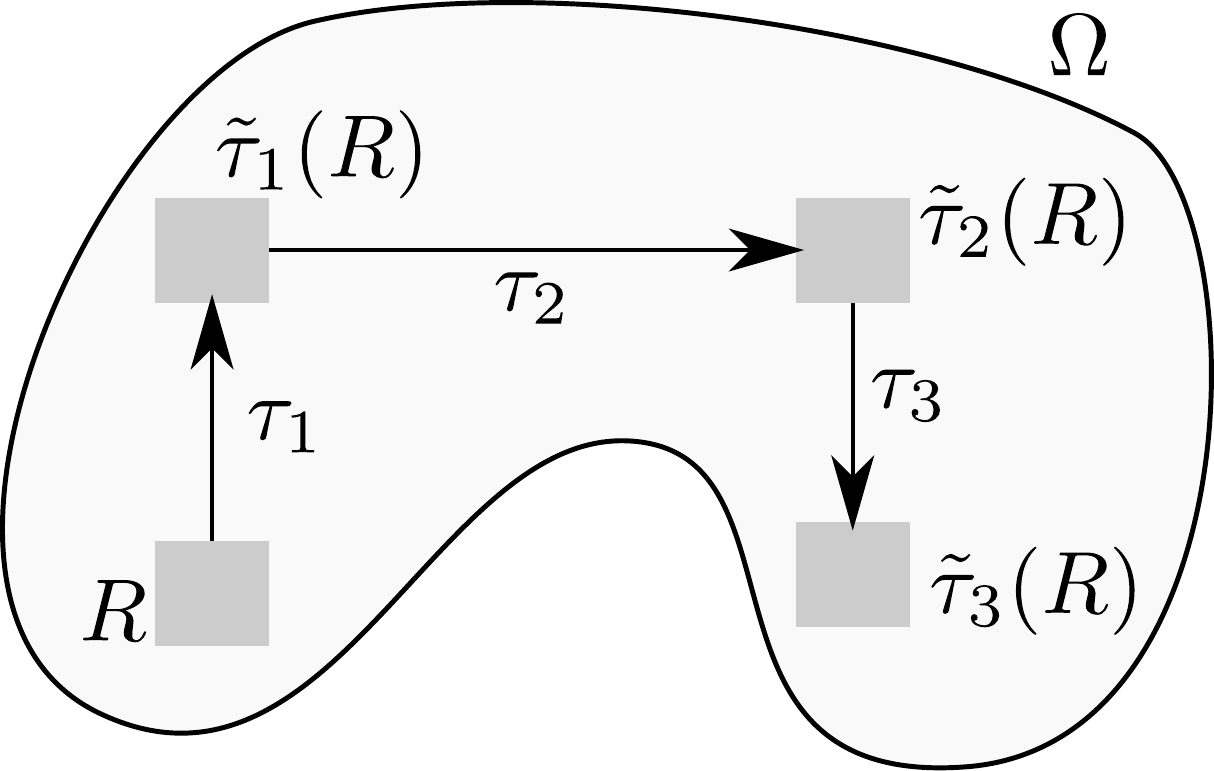}
\centering
\caption{Translating a rectangle in the proof of Lemma \ref{lem:lebesgueproj}.}
\label{fig:lebesgueproj}
\end{figure}
\begin{proof}
 Let $R\subset \Omega$ be a small parallelepiped, and let $\tau$ be a translation such that $\tau(R)\subset \Omega$. We will show that $(\projX{})_{\#}\mu(R)=(\projX{})_{\#}\mu(\tau(R))$, and since this will be true for all $R$ and all $\tau$, $(\projX{})_{\#}\mu$ must be a positive multiple of 
{the} Lebesgue measure on $\Omega$ \cite[Thm. 2.20]{rudin}. 
 Write $\tau$ as a finite composition of translations $\tau_i$ in the directions of the axes $x_1,\dots,x_n$,
 \[\tau=\tau_k\circ\tau_{k-1}\circ\dots\circ\tau_1.\]
 Denote $\tilde \tau_i=\tau_i\circ\tau_{i-1}\circ\dots\circ\tau_1$ and set $\tilde\tau_0$ equal to the identity. We assume $\tau_1,\dots,\tau_k$ have been chosen also in such a way that the convex hull of $\tilde\tau_{i-1}(R)\cup\tilde\tau_{i}(R)$ is contained in $\Omega$ for each $i$. Refer to Figure \ref{fig:lebesgueproj}.
 For each $i=1,\dots,k$, let $j_i$ be such that $\tau_i$ is a translation in direction $x_{j_i}$. 

 Take a sequence of smooth, compactly-supported functions $u_{i}\colon \Omega\to \R$, such that, for all $j=1,\dots,k$, the support of $u_{i}\circ\tilde\tau_j$ is properly contained in $\Omega$, and  $u_{i}\tilde\tau_j\to \chi_{R}\circ\tilde\tau_j$ in $L^1(\mu)$, that is, 
 \begin{equation}\label{eq:L1convergence}
  \lim_{i\to+\infty}\int_{\Omega\times Y\times Z} |u_{i}\circ\tilde\tau_j(x)-\chi_{\tilde\tau_j(R)}(x)|\,d\mu(x,y,z)=0,\qquad j=1,\dots,k.
 \end{equation}
 Define also, for $a\in\mathbb N$, $1\leq i\leq k$,
 \begin{multline*}
  \bar\phi_{ia}(x_1,\dots,x_n)=\\
  \int_{-\infty}^{x_{j_i}}u_{a}\circ\tilde\tau_i(x_1,\dots,x_{j_i-1},s,x_{j_i+1},\dots,x_n)-u_{a}\circ\tilde\tau_{i-1}(x_1,\dots,x_{j_i-1},s,x_{j_i+1},\dots,x_n)\,ds.
 \end{multline*}
 Observe that the $C^\infty$ function $\bar\phi_{ia}$ vanishes on the boundary of $\Omega$, so that equation \eqref{eq:boundarycondition} applies to $\bar\phi_{ia}$, which means that for the $j_i$-th entry we have, since $\partial\bar\phi_{ia}/\partial y=0$,
 \begin{equation*}
   0=\int_{\Omega\times Y\times Z}\frac{\partial\bar\phi_{ia}}{\partial x_{j_i}}(x)d\mu(x,y,z).
   \end{equation*}
  Applying the Fundamental Theorem of Calculus we get $\frac{\partial\bar\phi_{ia}}{\partial x_{j_i}}=u_a\circ\tilde\tau_i-u_a\circ\tilde\tau_{i-1}$, and then \eqref{eq:L1convergence} gives, as $a\to+\infty$, 
   \begin{multline*}
   0=\int_{\Omega\times Y\times Z}(u_{a}\circ\tilde\tau_i-u_{a}\circ\tilde\tau_{i-1})\,d\mu\\
   \to \int_{\Omega\times Y\times Z}(\chi_{\tilde\tau_i(R)}-\chi_{\tilde\tau_{i-1}(R)})\,d\mu
   =(\projX)_{\#}\mu(\tilde\tau_i(R))-(\projX)_{\#}\mu(\tilde\tau_{i-1}(R)).
 \end{multline*}
 By induction we get
 \[(\projX{})_{\#}\mu(R)=(\projX{})_{\#}\mu(\tilde\tau_{0}(R))=(\projX{})_{\#}\mu(\tilde\tau_{k}(R))=(\projX{})_{\#}\mu(\tau(R)).\qedhere\]
 \end{proof}

For a vector field $X\colon \Omega\times Y\to\R^{n+1}$, we can define
\begin{equation}\label{eq:defmuX}
{\mathcal C_\mu}(X)\coloneqq\int \langle X(x,y), 
 ( z_1,\dots,z_n,-1)\rangle\, d\mu(x,y,z).
\end{equation}
When $\mu$ is induced by a smooth function $\varphi\colon \Omega\to Y$, ${\mathcal C_\mu}(X)$ is the circulation of $X$ through the graph of $\varphi$, since $(z_1,\dots,z_n,-1)=(\frac{\partial\varphi}{\partial x_1},\dots,\frac{\partial\varphi}{\partial x_n},-1)$ is normal to the graph of $\varphi$. 

\begin{lemma}\label{lem:divergencecond}
Let $X\colon\Omega\times Y\to\R^{n+1}$ be a smooth, compactly-supported vector field  that vanishes on a neighborhood of $\partial \Omega\times Y$ and satisfies
$\operatorname{div} X=0.$
Then
\[{\mathcal C_\mu}(X)=0.\]
\end{lemma}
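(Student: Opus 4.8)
The plan is to use the condition $\operatorname{div}X=0$ to decompose $X$ into a finite sum of ``elementary'' divergence-free vector fields, each of which, once inserted into $\mu(\cdot)$, reproduces exactly one component of the cycle condition \eqref{eq:boundarycondition}. The key point --- and the place where codimension one enters --- is that the decomposition can be carried out using only elementary fields that mix a single horizontal direction $x_\ell$ with the vertical direction $y=x_{n+1}$, and these are precisely the fields that \eqref{eq:boundarycondition} controls.

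First I would fix a product neighbourhood of the support: since $X$ is compactly supported in $\Omega\times Y$ and vanishes near $\partial\Omega\times Y$, its support is contained in $\Omega'\times[-M,M]$ for some bounded open $\Omega'$ with $\overline{\Omega'}\subset\Omega$ and some $M>0$. For $\ell=1,\dots,n$ put $\psi_\ell(x,y)=\int_{-\infty}^y X_\ell(x,s)\,ds$; differentiating under the integral sign shows $\psi_\ell$ is smooth, and it vanishes whenever $x\notin\Omega'$ or $y<-M$, although it need not be compactly supported in $Y$ because for $y>M$ it equals $\int_{\R}X_\ell(x,s)\,ds$. Using $\operatorname{div}X=0$ one checks the pointwise identity
\begin{equation*}
 X=\sum_{\ell=1}^{n}\left(\frac{\partial\psi_\ell}{\partial y}\,e_\ell-\frac{\partial\psi_\ell}{\partial x_\ell}\,e_{n+1}\right),
\end{equation*}
where $e_1,\dots,e_{n+1}$ is the standard basis of $\R^{n+1}$: for $k\le n$ the $e_k$-component of the right-hand side is $\partial\psi_k/\partial y=X_k$, while its $e_{n+1}$-component is $-\sum_{\ell=1}^n\partial\psi_\ell/\partial x_\ell=-\int_{-\infty}^y\operatorname{div}_x X(x,s)\,ds=\int_{-\infty}^y\partial_s X_{n+1}(x,s)\,ds=X_{n+1}(x,y)$, the last step using that $X_{n+1}(x,s)\to 0$ as $s\to-\infty$.

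By linearity of $X\mapsto\mu(X)$ it then suffices to prove that $\mu(X^\ell)=0$ for each elementary field $X^\ell=\frac{\partial\psi_\ell}{\partial y}e_\ell-\frac{\partial\psi_\ell}{\partial x_\ell}e_{n+1}$. Plugging into \eqref{eq:defmuX} gives
\begin{equation*}
 \mu(X^\ell)=\int\left(\frac{\partial\psi_\ell}{\partial x_\ell}(x,y)+\frac{\partial\psi_\ell}{\partial y}(x,y)\,z_\ell\right)d\mu(x,y,z),
\end{equation*}
which is exactly the $\ell$-th entry of the left-hand side of \eqref{eq:boundarycondition} evaluated at the test function $\psi_\ell$. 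The one obstacle is that $\psi_\ell$ is \emph{not} compactly supported in $\Omega\times Y$ (it is not compactly supported in the $y$-variable), so \eqref{eq:boundarycondition} does not apply to it directly; this is handled by the compactness of $\operatorname{supp}\mu$. Choosing $\chi\in C_c^\infty(\R)$ identically equal to $1$ on a neighbourhood of $\proj_Y(\operatorname{supp}\mu)$, the function $\chi(y)\psi_\ell(x,y)$ lies in $C_c^\infty(\Omega\times Y)$, and on $\operatorname{supp}\mu$ one has $\chi\equiv 1$ and $\chi'\equiv 0$, so $\partial_{x_\ell}(\chi\psi_\ell)=\partial_{x_\ell}\psi_\ell$ and $\partial_y(\chi\psi_\ell)=\partial_y\psi_\ell$ there. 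Hence $\mu(X^\ell)$ coincides with the left-hand side of \eqref{eq:boundarycondition} evaluated at $\chi\psi_\ell$, which is $0$; summing over $\ell$ yields $\mu(X)=0$. The main --- and essentially only --- difficulty is thus the admissibility bookkeeping that forces the cutoff $\chi$; the remaining computations are elementary and, in particular, do not depend on the topology of $\Omega$.
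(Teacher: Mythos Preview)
Your proof is correct and follows essentially the same approach as the paper: both arguments define the vertical primitives $\psi_\ell(x,y)=\int_{-\infty}^y X_\ell(x,s)\,ds$ (the paper calls them $\tilde X_i$) and then feed them into the cycle condition \eqref{eq:boundarycondition}, using $\operatorname{div}X=0$ to recover $X_{n+1}$ from the horizontal derivatives. Your presentation via the decomposition $X=\sum_\ell X^\ell$ is a cosmetic reorganisation of the same computation; in fact you are slightly more careful than the paper in explicitly justifying, via the cutoff $\chi$ and the compactness of $\operatorname{supp}\mu$, why the non--compactly-supported $\psi_\ell$ can be used as a test function in \eqref{eq:boundarycondition}.
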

\begin{proof} 
Let 
\begin{equation}\label{eq:Xtilde}
\tilde X_i(x,y)=\int_{-\infty}^yX_i(x,s)ds.
\end{equation}
Then $%
\tilde X_i\in C^\infty%
(\Omega\times Y)$ and vanishes on $\partial\Omega\times Y$, so by the $i$-th entry of \eqref{eq:boundarycondition}, with $\phi=%
\tilde X_i$,%
\[
\int_{\Omega\times Y\times Z}\left(\frac{\partial\tilde X_i}{\partial x_i}(x,y) +X_i(x,y)z_i\right) d\mu(x,y,z)%
=0.
\]%
Rearranging, and plugging this into the definition of ${\mathcal C_\mu}(X)$, it follows that
\begin{align*}
 {\mathcal C_\mu}(X)&=\sum_{i=1}^n\int_{\Omega\times Y\times Z}X_i(x,y)z_i\,d\mu(x,y,z)-\int_{\Omega\times Y\times Z}X_{n+1}(x,y)d\mu(x,y,z)\\
 &=- %
 \int_{ \Omega\times Y\times Z}\sum_{i=1}^{n}\frac{\partial\tilde X_i}{\partial x_i}(x,y)%
 +X_{n+1}(x,y)\,d\mu(x,y,z)
\end{align*}
Now, using (\ref{eq:Xtilde}) and 
\[X_{n+1}(x,y)=\int_{-\infty}^y\frac{\partial X_{n+1}}{\partial x_{n+1}}(x,s)\,ds,\] 
we get
\begin{equation*}
 \sum_{i=1}^n\frac{\partial\tilde X_i}{\partial x_i}+X_{n+1}
= \int_{-\infty}^{y} \sum_{i=1}^{n+1}\frac{\partial X_i}{\partial x_i}(x,s)\,ds=\int_{-\infty}^{y}\operatorname{div} X\,ds, %
\end{equation*}
which vanishes by the assumption that $\operatorname{div} X = 0$. 
\qedhere
\end{proof}
We define, for measurable, compactly supported, and bounded functions $u\colon\Omega\times Y\to\R$,
\begin{align}
\notag S(u)&=\mu\left(0,\dots,0,\int_{y}^\infty u(x,s)ds\right)\\
 \label{eq:defS} &=-\int \left(\int_{y}^\infty u(x,s)ds\right)\,d\mu(x,y,z).
\end{align}
\begin{lemma}\label{lem:radon-nikodym} 
 The functional $S$ corresponds to integration with respect to an absolutely continuous nonpositive measure; in other words, there is a measurable function $\rho\colon\Omega\times Y\to(-\infty,0]$ such that 
\begin{align*}
 S(u)&=\int_{\Omega\times Y} u(x,y)\rho(x,y)\,dx\,dy.
\end{align*}
\end{lemma}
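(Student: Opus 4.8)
\medskip

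The plan is to show two things about the functional $S$: first, that it is \emph{monotone} (nonpositive on nonnegative inputs), so that it extends to a nonpositive Radon measure on $\Omega\times Y$ by the Riesz representation theorem; and second, that this measure is absolutely continuous with respect to Lebesgue measure on $\Omega\times Y$, so that the Radon--Nikodym theorem furnishes the density $\rho\le 0$.

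\medskip

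For monotonicity, suppose $u\ge 0$ is measurable, compactly supported, and bounded. Then the vector field $X=(0,\dots,0,\int_y^\infty u(x,s)\,ds)$ has nonnegative last component $X_{n+1}\ge 0$, and by the definition \eqref{eq:defmuX},
\[
S(u)=\mu(X)=\int \Big(\sum_{i=1}^n 0\cdot z_i \;-\; X_{n+1}(x,y)\Big)\,d\mu(x,y,z)=-\int X_{n+1}(x,y)\,d\mu\le 0,
\]
since $\mu$ is a positive measure. (Linearity of $S$ in $u$ is immediate from \eqref{eq:defS}, and $S$ is bounded on any fixed compact set because $\mu$ is finite and the inner integral is controlled by $\|u\|_\infty$ times the length of the support in the $y$-direction.) Thus $S$ is a positive linear functional on, say, $C_c(\Omega\times Y)$ after a sign change, so by Riesz representation there is a nonpositive Radon measure $\lambda$ on $\Omega\times Y$ with $S(u)=\int u\,d\lambda$ for all continuous compactly supported $u$; a routine monotone-class / dominated-convergence argument extends this to all bounded compactly supported measurable $u$.

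\medskip

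The substantive step is absolute continuity: I must show $\lambda(A)=0$ whenever $|A|=0$, equivalently that $S(u)=0$ when $u$ is supported on a Lebesgue-null set. The idea is to exploit the fact that, for vector fields $X$ with $\operatorname{div}X=0$ and compact support away from $\partial\Omega\times Y$, Lemma \ref{lem:divergencecond} gives $\mu(X)=0$; more generally, writing an arbitrary smooth compactly supported test vector field $X$ in terms of its divergence, one sees that $\mu(X)$ depends only on $\operatorname{div}X$, which is exactly the content of the "fundamental identity'' $\mu(X)=S(\operatorname{div}X)$ advertised in the overview. Concretely: given $u$ bounded, compactly supported, set $X=(0,\dots,0,\int_y^\infty u\,ds)$, so $\operatorname{div}X=-u$; then $S(u)=\mu(X)=-S'(\operatorname{div}X)$ for an appropriate functional, and iterating/combining with Lemma \ref{lem:divergencecond} shows that $S(u)$ is unchanged if $u$ is modified on a set of measure zero in the $(x,y)$ variables — because such a modification can be absorbed into a divergence-free field by Lemma \ref{lem:lebesgueproj} (which pins the $x$-marginal of $\mu$ to Lebesgue measure) together with an approximation in $L^1$. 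In more detail, one approximates $u=\chi_A$ with $|A|=0$ from outside by open sets $U_k\supset A$ with $|U_k|\to0$; the quantity $\int_y^\infty \chi_{U_k}(x,s)\,ds\to 0$ in $L^1(dx\,dy)$ and is uniformly bounded, and since $(\projX)_\#\mu=c\,dx$ by Lemma \ref{lem:lebesgueproj} while the $y$-extent of $\operatorname{supp}\mu$ is bounded, dominated convergence along the fibers gives $S(\chi_{U_k})\to 0$, hence $S(\chi_A)=0$. This yields $\lambda\ll dx\,dy$, and Radon--Nikodym produces $\rho\in L^1_{\mathrm{loc}}$ with $S(u)=\int u\,\rho\,dx\,dy$; the sign constraint $\rho\le0$ follows from $\lambda\le0$.

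\medskip

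The main obstacle I anticipate is the absolute continuity step, and specifically making rigorous the claim that $S$ annihilates functions supported on null sets. The delicate point is that $S(u)=-\int(\int_y^\infty u\,ds)\,d\mu$ involves integrating in $z$ against $\mu$, which could in principle concentrate; one must use that the integrand $\int_y^\infty u(x,s)\,ds$ is independent of $z$ and then transfer the problem to the $(x,y)$-marginal of $\mu$. But the $(x,y)$-marginal need not be absolutely continuous — only the $x$-marginal is, by Lemma \ref{lem:lebesgueproj}. The resolution is that a null set $A\subset\Omega\times Y$ has the property that for Lebesgue-a.e.\ $x$, the slice $A_x=\{y:(x,y)\in A\}$ is null in $\R$, hence $\int_y^\infty \chi_A(x,s)\,ds=0$ for a.e.\ $(x,y)$; so the integrand defining $S(\chi_A)$ vanishes $\mu$-a.e.\ because its $x$-marginal is Lebesgue. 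This fiberwise argument, combined with Fubini, is the crux and must be stated carefully; everything else is a routine application of Riesz representation and Radon--Nikodym.
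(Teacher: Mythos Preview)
Your proposal is correct and ultimately lands on exactly the paper's argument: the fiberwise observation that a Lebesgue-null $A\subset\Omega\times Y$ has null slices $A_x$ for a.e.\ $x$, so $\int_y^\infty\chi_A(x,s)\,ds=0$ for a.e.\ $x$, and then Lemma~\ref{lem:lebesgueproj} (the $x$-marginal of $\mu$ is Lebesgue) plus Fubini give $S(\chi_A)=0$; nonpositivity of $\rho$ follows from $S(u)\le 0$ for $u\ge 0$. The middle detour through divergence-free fields, the undefined $S'$, and the open-set approximation $U_k$ is unnecessary and somewhat muddled (for instance, $\int_y^\infty\chi_{U_k}\,ds\to 0$ in $L^1(dx\,dy)$ is not even finite without restricting $y$); you can delete that paragraph entirely, since your final ``resolution'' is already the complete proof, and the separate Riesz step is also dispensable once you verify absolute continuity directly.
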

\begin{proof}
 This follows from the Radon-Nikodym theorem. To apply the theorem we need to check that, if $A\subset \Omega\times Y$ is a set of measure zero and $\chi_A$ is its indicator function, then $S(\chi_A)=0$. 
 Indeed, if $A\cap \{(x,y):y\in Y\}$ has zero measure for Lebesgue-almost all $x\in\Omega$, then $\int_{-\infty}^y\chi_A(x,s)ds=0$ for almost all $x\in \Omega$, and by Lemma \ref{lem:lebesgueproj} and the Fubini theorem, the integral in the definition \eqref{eq:defS} of $S(\chi_A)$ vanishes. To see that the function $\rho$ can be taken to be nonpositive, observe that whenever $u$ is nonnegative, its primitive also satisfies $\int_{-\infty}^yu(x,s)ds\geq 0$, so $S(u)\leq 0$.
\end{proof}

\begin{lemma}\label{lem:decreasingconstant}
 When restricted to a line $\{(x,y):y\in Y\}$, $x\in \Omega$, the function $y\mapsto \rho(x,y)$ is non-increasing for almost every $x\in \Omega$. If $N>0$ is such that $\operatorname{supp}\mu\subset \Omega\times (-N,N)\times Z$, then $y\mapsto\rho(x,y)$ vanishes throughout $(-\infty,{-}N]$ and is constant on $[N,+\infty)$, for almost every $x\in\Omega$.
\end{lemma}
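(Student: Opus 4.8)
The plan is to read off every claim by feeding suitable test functions into the identity of Lemma~\ref{lem:radon-nikodym}, namely $S(u)=\int_{\Omega\times Y}u(x,y)\rho(x,y)\,dx\,dy$.

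For the monotonicity, I would fix $h>0$ and, for a nonnegative, bounded, compactly supported $u\colon\Omega\times Y\to\R$, consider its upward shift $v(x,y)=u(x,y-h)$. Using the definition~\eqref{eq:defS} together with the elementary identity $\int_y^\infty u(x,s)\,ds-\int_y^\infty u(x,s-h)\,ds=-\int_{y-h}^{y}u(x,s)\,ds$, one gets
\[
 S(u)-S(v)=\int_{\Omega\times Y\times Z}\Big(\int_{y-h}^{y}u(x,s)\,ds\Big)\,d\mu(x,y,z)\ \ge\ 0,
\]
where nonnegativity holds because $\mu\ge 0$ and $u\ge 0$. On the other hand, the substitution $y\mapsto y+h$ in $S(v)$ turns the left-hand side into $\int_{\Omega\times Y}u(x,y)\big(\rho(x,y)-\rho(x,y+h)\big)\,dx\,dy$. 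Running $u$ through indicators $\chi_A$ of bounded measurable $A\subset\Omega\times Y$ then forces $\rho(x,y)\ge\rho(x,y+h)$ for Lebesgue-a.e.\ $(x,y)$. Intersecting these full-measure sets over rational $h>0$ and invoking Fubini produces a full-measure set of $x\in\Omega$ along which $y\mapsto\rho(x,y)$ agrees, off a null set of $y$, with a non-increasing function; I fix that representative. (Given the existence of $\rho$, this part of the argument uses only $\mu\ge0$, not~\eqref{eq:boundarycondition}.)

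For the behaviour at large $|y|$, fix $N>0$ with $\operatorname{supp}\mu\subset\Omega\times(-N,N)\times Z$. If $u$ is supported in $\Omega\times(-\infty,-N)$, then $y>-N$ for $\mu$-a.e.\ $(x,y,z)$, so $\int_y^\infty u(x,s)\,ds=0$ and hence $S(u)=0$; taking $u=\chi_A$ with $A\subset\Omega\times(-\infty,-N)$ arbitrary yields $\rho=0$ a.e.\ there. If instead $u$ is supported in $\Omega\times(N,+\infty)$, then $y<N$ for $\mu$-a.e.\ $(x,y,z)$, so $\int_y^\infty u(x,s)\,ds=\int_{\R}u(x,s)\,ds=:U(x)$ is independent of $(y,z)$, and Lemma~\ref{lem:lebesgueproj} gives
\[
 S(u)=-\int_{\Omega\times Y\times Z}U(x)\,d\mu=-c\int_\Omega U(x)\,dx=-c\int_{\Omega\times Y}u(x,y)\,dx\,dy ,
\]
so that $\rho=-c$ a.e.\ on $\Omega\times(N,+\infty)$. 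Combined with the non-increasing representative from the previous step, this shows that for a.e.\ $x$ the function $y\mapsto\rho(x,y)$ may be taken to vanish on $(-\infty,-N]$ and to equal the constant $-c$ on $[N,+\infty)$.

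I do not expect a serious obstacle: each computation is short. The only point that needs care is the passage from the family of pointwise-a.e.\ inequalities $\rho(\cdot,\cdot)\ge\rho(\cdot,\cdot+h)$, one for every shift $h$, to a single statement valid for a.e.\ $x$ and all shifts simultaneously --- equivalently, the selection of a good representative of $\rho$. This is precisely what the countable-intersection-plus-Fubini step accomplishes; once that representative is fixed (and modified on a Lebesgue-null set so as to take the endpoint values $0$ at $-N$ and $-c$ at $N$), the stated monotonicity and boundary behaviour follow, and the modification does not affect any later use of $\rho$, which always enters through integrals.
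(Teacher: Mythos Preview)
Your proposal is correct and follows essentially the same approach as the paper: both arguments feed indicator-type test functions into the identity $S(u)=\int u\rho$ and exploit the positivity of $\mu$ via the definition~\eqref{eq:defS} to get monotonicity, and both use the support condition on $\mu$ to read off the behaviour for $|y|>N$. Your version is marginally more explicit in two respects---you identify the constant on $[N,\infty)$ as $-c$ via Lemma~\ref{lem:lebesgueproj}, and you spell out the countable-intersection-plus-Fubini step needed to pass to a single non-increasing representative---but the underlying mechanism is identical to the paper's box-translation argument.
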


Observe that, strictly speaking, $\rho$ is only defined Lebesgue-almost everywhere on $\Omega\times Y$, so the statement of the lemma should be interpreted as ascertaining the existence of a representative, in the 
equivalence class of measurable functions coinciding with $\rho$ Lebesgue-almost everywhere, having the desired properties.

\begin{proof}
Let $R$ be an $(n+1)$-dimensional box in $\Omega\times Y$ whose edges are parallel to the axes%
 , and let $\tau_t(x,y)=(x,y+t)$ be the translation in the $y$ direction. Then, by Lemma \ref{lem:radon-nikodym} and definitions \eqref{eq:defmuX} and \eqref{eq:defS},
 \begin{align*}
 \int_{R} \rho(x,y-t)\,dx\,dy&=\int_{\tau_t(R)} \rho(x,y)\,dx\,dy\\
 &=\int_{\Omega\times Y} \chi_{\tau_t(R)}(x,y)\rho(x,y)\,dx\,dy\\
 &=S(\chi_{\tau_t(R)})\\
 &=\mu\left(0,\dots,0,\int_{y}^\infty\chi_{\tau_t(R)}(x,s)ds\right)\\
 &=\mu\left(0,\dots,0,\int_{y-t}^{\infty}\chi_{R}(x,s)ds\right)\\
 &=-\int_{\Omega\times Y\times Z}\left(\int_{y-t}^{\infty}\chi_R(x,s)ds\right)d\mu(x,y,z).
 \end{align*}
 Since $\mu$ is a positive measure, the last term is nonincreasing in $t$. Since this is true for all $t$ and all $R$, this proves that $\rho$ is nonincreasing in the $y$ direction. This proves the first part of the lemma.
 
 To prove the second statement of the lemma, consider the case in which $R=R_\Omega\times[a,b]$ for some box $R_\Omega\subset \Omega$ and some $a<b$. Then, if $x\in R_\Omega$,
 \[\int_y^\infty\chi_R(x,s)\,ds=\begin{cases}
  b-a, &y\leq a,\\
  0,&y\geq b.
 \end{cases}
 \]
 Thus if $a<b\leq -N$, we have
 \[\int_R\rho(x,y)\,dx\,dy=-\int_{\Omega\times Y\times Z}\left(\int_y^\infty\chi_R(x,y)\,ds\right)d\mu(x,y,z)=\int_{\Omega\times Y\times Z}0\,d\mu(x,y,z).\]
 On the other hand, if $N\leq a<b$, then
  \[\int_R\rho(x,y)\,dx\,dy=-\int_{\Omega\times Y\times Z}\left(\int_y^\infty\chi_R(x,y)\,ds\right)d\mu(x,y,z)=-\int_{\Omega\times Y\times Z}(b-a)\,d\mu(x,y,z).\]
 This is invariant under translations of the interval $[a,b]$.
 This proves the second statement of the lemma.
\end{proof}

\begin{lemma}\label{lem:boundedrange}
The function $\rho$ in Lemma \ref{lem:radon-nikodym} is essentially bounded.
\end{lemma}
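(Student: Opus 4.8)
The plan is to prove the slightly sharper statement that $-c\le\rho\le 0$ Lebesgue‑almost everywhere on $\Omega\times Y$, where $c>0$ is the constant supplied by Lemma~\ref{lem:lebesgueproj}. This constant is finite (indeed $c=\mu(\Omega\times Y\times Z)/|\Omega|$ and $\mu$ is a finite measure), and the upper bound $\rho\le 0$ is already part of Lemma~\ref{lem:radon-nikodym}, so the entire task reduces to the lower bound $\rho\ge -c$.

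The route I would take is an $L^\infty$--$L^1$ duality argument built on the explicit formula~\eqref{eq:defS} for $S$. First I would observe that for any nonnegative, bounded, compactly supported $u$ on $\Omega\times Y$ the primitive $\int_y^\infty u(x,s)\,ds$ is trapped between $0$ and $U(x):=\int_Y u(x,s)\,ds$, a function of $x$ alone which is finite by compactness of $\operatorname{supp}u$. Integrating against $\mu$, using Lemma~\ref{lem:lebesgueproj} to replace $(\pi_\Omega)_\#\mu$ by $c\,dx$, and applying Fubini, this gives $0\ge S(u)\ge -c\,\|u\|_{L^1(\Omega\times Y)}$. Decomposing a general bounded compactly supported $u$ as $u_+-u_-$ upgrades this to $|S(u)|\le c\,\|u\|_{L^1(\Omega\times Y)}$, hence, by Lemma~\ref{lem:radon-nikodym}, to $\big|\int_{\Omega\times Y}u\,\rho\,dx\,dy\big|\le c\,\|u\|_{L^1(\Omega\times Y)}$. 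Finally I would test this inequality with $u$ the indicator of $\{-\rho>c+\varepsilon\}\cap K$ for $\varepsilon>0$ and $K\subset\Omega\times Y$ compact; since $\rho<-(c+\varepsilon)$ there, the inequality forces that set to be Lebesgue‑null, and exhausting $\Omega\times Y$ by compacta and letting $\varepsilon\downarrow 0$ yields $\rho\ge -c$ a.e.

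An alternative that avoids duality is to read the bound straight off Lemma~\ref{lem:decreasingconstant}: choosing $N>0$ with $\operatorname{supp}\mu\subset\Omega\times(-N,N)\times Z$ and passing to the monotone representative of $\rho$ provided there, for almost every $x$ the map $y\mapsto\rho(x,y)$ is non‑increasing, vanishes for $y$ sufficiently negative, and is constant for $y\ge N$; the computation inside the proof of Lemma~\ref{lem:decreasingconstant} (with $R=R_\Omega\times[a,b]$, $N\le a<b$) identifies that constant as $-c$ for a.e.\ $x$, after which monotonicity traps $\rho(x,\cdot)$ in $[-c,0]$. I do not anticipate a genuine obstacle in either approach; the one point that needs care is verifying that the constant value of $\rho$ on $[N,+\infty)$ is the \emph{same} $-c$ for almost every $x$, rather than a merely finite, possibly $x$‑dependent constant — which is exactly what the integral identity in the proof of Lemma~\ref{lem:decreasingconstant}, or equivalently the uniform estimate $|S(u)|\le c\,\|u\|_{L^1}$ in the first approach, provides.
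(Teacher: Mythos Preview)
Your proposal is correct, and both routes you sketch work. In fact your first argument gives the sharp bound $-c\le\rho\le 0$ directly: for $u\ge 0$ bounded and compactly supported, $0\le\int_y^\infty u(x,s)\,ds\le\int_Y u(x,s)\,ds$, so integrating against $\mu$ and invoking Lemma~\ref{lem:lebesgueproj} yields $0\ge S(u)\ge -c\|u\|_{L^1(\Omega\times Y)}$, and duality against $L^1$ forces $\|\rho\|_{L^\infty}\le c$. Your second route is also sound; the computation at the end of the proof of Lemma~\ref{lem:decreasingconstant} indeed gives $\int_{R_\Omega\times[a,b]}\rho=-c(b-a)|R_\Omega|$ for $N\le a<b$, so the constant value of $\rho(x,\cdot)$ on $[N,\infty)$ is $-c$ for a.e.\ $x$, and monotonicity finishes.

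The paper proceeds differently: it argues by contradiction, assuming $\rho$ unbounded and building test functions $\phi_j$ supported above the support of $\mu$ that approximate normalized indicators of sets where $\rho\le -j$; feeding these into the chain $S(\phi_j)=\mu(0,\dots,0,\int_y^\infty\phi_j)$ and estimating via Lemma~\ref{lem:lebesgueproj} gives $-j/2\ge -4c$, a contradiction for large $j$. Your approach is more economical and yields the explicit constant, while the paper's argument only establishes essential boundedness without identifying the bound. The underlying estimate is the same---bounding the $y$-primitive of the test function by its full integral in $y$ and then pushing through $(\pi_\Omega)_\#\mu=c\,dx$---but you apply it uniformly over all $u$ rather than along a contradictory sequence.
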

\begin{proof}
 Aiming for a contradiction, assume that the function $\rho\leq 0$ is not essentially bounded. 
 Then the sets $B_j=\{(x,y)\in\Omega\times Y:\rho(x,y)\leq -j\}$, $j\in\mathbb N$, have positive measure. By Lemma \ref{lem:decreasingconstant}, if we take $N>0$ to be such that $\operatorname{supp}\mu\subset\Omega\times(-N,N)\times Z$, then $y\mapsto \rho(x,y)$ is everywhere non-increasing and is constant on $[N,+\infty)$ for all $x\in\Omega$. Thus the sets $B_j\cap(\Omega\times [N,N+1])$ must have positive measure.
 Pick a subset $A_j\subset \{(x,y)\in\Omega\times [N,N+1]:\rho(x,y)\leq -j\}\subset B_j$ of finite measure $|A_j|<\infty$ and of the form $A_j=A_j^\Omega\times [N,N+1]$, with $A_j^\Omega\subset\Omega$. Observe that this means that $A_j$ does not intersect the compact set $\operatorname{supp}\mu$. Pick an open set $U_j\subset\Omega\times Y$, of the same product form, $U_j=U^\Omega_j\times[N,N+1]$, such that $A_j\subset U_j$ with $|U_j\setminus A_j|\leq |A_j|/j$, which is possible due to the outer regularity of {the} Lebesgue measure. Note that the function $f_j=\chi_{U_j}/|A_j|$ verifies $\int_{\Omega\times Y} \rho f_j\leq \int_{A_j} (-j) f_j+\int_{U_j\setminus A_j}0\,f_j= -j$. Take $\phi_j\in C^\infty_c(\Omega\times Y)$  to be any $C^\infty$ nonnegative function approximating $f_j$ well enough and satisfying
 \begin{gather}
 \label{eq:contradiction}
 \int_{\Omega\times Y}\rho\phi_j dx\,dy\leq -j/2, \\
 \notag |\projX{}(\operatorname{supp}\phi_j)|\leq 2|A^\Omega_j|, \\
 \notag \sup_{x\in\Omega}\int_{-\infty}^\infty\phi_j(x,s)ds\leq 2\frac{1}{|A_j|}=\frac2{|A^\Omega_j|}.%
 \end{gather}
 Then we have by Lemma \ref{lem:radon-nikodym}, \eqref{eq:defS}, the fact that $\mu$ and $\phi_j$ are non-negative, the bounds above, and Lemma \ref{lem:lebesgueproj},
 \begin{align*}
    -\frac j2&\geq\int_{\Omega\times Y}\phi_j(x,y)\rho(x,y)dx\,dy\\
    &=S(\phi_j)\\
    &=\mu\left(0,\dots,0,\int_{-\infty}^y\phi_j(x,s)ds\right)\\
    &=-\int_{\Omega\times Y\times Z}\int_{-\infty}^y\phi_j(x,s)ds\, d\mu(x,y,z)\\
    &\geq - \int_{\Omega\times Y\times Z}\int_{-\infty}^\infty\phi_j(x,s)ds\, d\mu(x,y,z)\\
    &\geq -\int_{\Omega\times Y\times Z}
    \frac2{|A^\Omega_j|}%
    \chi_{\projX{}(\operatorname{supp}\phi_j)}(x)d\mu(x,y,z)\\
    &=-\frac2{|A^\Omega_j|}(\projX{})_\#\mu(\projX{}(\operatorname{supp}\phi_j))\\
    &=-\frac2{|A^\Omega_j|}c|\projX{}(\operatorname{supp}\phi_j)|\\
    &\geq -\frac2{|A^\Omega_j|}c(2|A^\Omega_j|)=-4c,
 \end{align*}
 where $\projX{}$ and $c$ are as in the statement of Lemma \ref{lem:lebesgueproj}.
 This uniform bound gives the contradiction we were aiming for. We conclude that the essential range of $\rho$ is a bounded interval in $(-\infty,0]$. 
 \end{proof}
 
 We will henceforth take $\rho$ to be bounded (we may choose such a representative in its class of essentially bounded functions) and denote the range of $\rho$ by
 \[{\mathcal R}=\rho(\Omega\times Y)\subset(-\infty,0].\]
 We will also denote by $\nu$ the restriction of 
{the} Lebesgue measure to ${\mathcal R}$, sometimes denoted $\nu=\mathcal L^1\llcorner\mathcal R$.

\begin{lemma}\label{lem:dividentity}
For all smooth vector fields $X$ compactly supported in $\Omega\times Y$ and vanishing in a neighborhood of $\partial \Omega\times Y$,
\[{\mathcal C_\mu}(X)=-S(\operatorname{div} X).\]
\end{lemma}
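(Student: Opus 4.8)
The plan is to re-run the computation from the proof of Lemma~\ref{lem:divergencecond} without discarding anything at the end: that argument never uses the hypothesis $\operatorname{div} X=0$ until its very last line, and what it actually produces is an exact formula for $\mu(X)$ as minus the $\mu$-integral of the $y$-primitive of $\operatorname{div} X$ --- which is precisely $S(\operatorname{div} X)$ by the definition~\eqref{eq:defS}. So the statement amounts to the observation that Lemma~\ref{lem:divergencecond} already proves more than it asserts, and no new idea is required.

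In detail, for $i=1,\dots,n$ I would put $\tilde X_i(x,y)=\int_{-\infty}^{y}X_i(x,s)\,ds$ as in~\eqref{eq:Xtilde}; these functions are smooth on $\Omega\times Y$, bounded, and vanish in a neighbourhood of $\partial\Omega\times Y$. The first step is to apply~\eqref{eq:boundarycondition} with $\phi=\tilde X_i$, obtaining $\int X_iz_i\,d\mu=-\int\partial_{x_i}\tilde X_i\,d\mu$ for each $i$. The second step is the bookkeeping from the proof of Lemma~\ref{lem:divergencecond}: substituting these identities into the definition~\eqref{eq:defmuX} of $\mu(X)$, and using that $X$ has compact support to write $X_{n+1}(x,y)=\int_{-\infty}^{y}\partial_{x_{n+1}}X_{n+1}(x,s)\,ds$ and $\partial_{x_i}\tilde X_i(x,y)=\int_{-\infty}^{y}\partial_{x_i}X_i(x,s)\,ds$, the integrand $\sum_{i=1}^{n}\partial_{x_i}\tilde X_i+X_{n+1}$ collapses to $\int_{-\infty}^{y}\operatorname{div} X(x,s)\,ds$, so that
\begin{align*}
\mu(X)&=-\int_{\Omega\times Y\times Z}\Big(\sum_{i=1}^{n}\partial_{x_i}\tilde X_i(x,y)+X_{n+1}(x,y)\Big)\,d\mu(x,y,z)\\
&=-\int_{\Omega\times Y\times Z}\int_{-\infty}^{y}\operatorname{div} X(x,s)\,ds\,d\mu(x,y,z)=S(\operatorname{div} X).
\end{align*}

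The single genuine obstacle is the first step: relation~\eqref{eq:boundarycondition} is stated only for $\phi\in C^\infty_c(\Omega\times Y)$, whereas $\tilde X_i$ is smooth and bounded but \emph{not} compactly supported in the $y$-direction. I would handle this in the usual way: multiply $\tilde X_i$ by a cutoff $\eta\in C^\infty_c(\Omega\times Y)$ equal to $1$ on a neighbourhood of the compact set $\operatorname{supp}\mu$, so that $\eta\tilde X_i$ is an admissible test function, and observe that the terms in~\eqref{eq:boundarycondition} carrying derivatives of $\eta$ are supported off $\operatorname{supp}\mu$ and hence integrate to $0$ against $\mu$, leaving exactly $\int X_iz_i\,d\mu=-\int\partial_{x_i}\tilde X_i\,d\mu$. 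One further bookkeeping check is worth making explicit: had one taken all primitives from $+\infty$ instead of from $-\infty$, the same steps would yield a superficially different expression for $\mu(X)$, the discrepancy being $\int_{\Omega\times Y\times Z}\operatorname{div} W(x)\,d\mu$ with $W(x)=\int_{\R}X(x,s)\,ds$ a smooth vector field compactly supported in $\Omega$; this vanishes because by Lemma~\ref{lem:lebesgueproj} it equals $c\int_\Omega\operatorname{div} W\,dx=0$ (divergence theorem), so the formula, and hence the identity, is independent of this choice. Everything past the cutoff step is just the fundamental theorem of calculus together with the already-established Lemma~\ref{lem:divergencecond}.
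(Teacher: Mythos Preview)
Your route and the paper's diverge in packaging. The paper does not reopen the proof of Lemma~\ref{lem:divergencecond} at all: it simply observes that the modified vector field
\[
X-\Big(0,\dots,0,\int_y^\infty\operatorname{div}X(x,s)\,ds\Big)
\]
has vanishing divergence, applies Lemma~\ref{lem:divergencecond} to it as a black box, and reads off $\mu(X)=\mu\big(0,\dots,0,\int_y^\infty\operatorname{div}X\,ds\big)=S(\operatorname{div}X)$ straight from~\eqref{eq:defS}. Your approach---rerunning the internal computation of that lemma to extract an explicit formula for $\mu(X)$---is legitimate and arrives at the same place, but is longer: you must redo the cutoff argument for the non-compactly-supported $\tilde X_i$, and then reconcile the $\int_{-\infty}^y$ primitive you produced with the $\int_y^\infty$ appearing in the definition of $S$. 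The paper's subtraction trick sidesteps both issues by building the correct primitive into the modified field from the start.

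One point that needs tightening rather than being relegated to a ``bookkeeping check'': your final displayed equality
\[
-\int_{\Omega\times Y\times Z}\int_{-\infty}^{y}\operatorname{div} X(x,s)\,ds\,d\mu=S(\operatorname{div} X)
\]
is not immediate, since~\eqref{eq:defS} defines $S$ with $\int_y^\infty$, not $\int_{-\infty}^y$. Your $W$-argument is exactly the missing link---because $\int_{-\infty}^y+\int_y^\infty=\int_{-\infty}^\infty$ and the latter integrates against $\mu$ to $c\int_\Omega\operatorname{div}_x W=0$---but you should present it as the justification of that equality, not as an optional consistency check between two conventions. As written, the display asserts an identity whose proof is deferred to the following paragraph and framed as incidental.
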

\begin{proof}
Indeed,
\begin{align*}
 \operatorname{div}&(X+\Big(\underbrace{0,\dots,0}_n,\int_y^{\infty} \operatorname{div}X(x,s)\,ds\Big))\\
 &=\sum_{i=1}^n\frac{\partial }{\partial x_i}\left(
  X_i+0
 \right)+\frac{\partial}{\partial y}\left(X_{n+1}+\int_y^\infty\operatorname{div} X\,ds\right)\\
 &=\operatorname{div} X-\operatorname{div} X\\
 &=0.
\end{align*}
By Lemma \ref{lem:divergencecond},
\begin{align*}0&={\mathcal C_\mu}(X+\left(0,\dots,0,\int_y^\infty\operatorname{div}X\,ds\right))\\
&={\mathcal C_\mu}(X)+\mu\left(0,\dots,0,\int_y^\infty\operatorname{div}X\,ds\right)\\
&={\mathcal C_\mu}(X)+S(\operatorname{div}X).\qedhere
\end{align*}
\end{proof}

\begin{lemma}\label{lem:main}
The functions $\varphi_r:\Omega\to \R$ defined by,
\[\varphi_r(x)=\inf_{\substack{y\in Y\\\rho(x,y)\leq r}}y,\qquad r\in {\mathcal R}=\rho(\Omega\times Y)\subset(-\infty,0], \; x\in \Omega,\]
are weakly differentiable.
These functions satisfy, for all $X\in C^\infty_c(\Omega\times Y;\R^{n+1})$,
\begin{equation}\label{eq:divXvarphi}
    \int_{{\mathcal R}}\int_{\{(x,y)\in\Omega\times Y:y\geq \varphi_r(x)\}}\hspace{-20mm}\operatorname{div}X\,dx\,dy\,d\nu(r)=\int_{{\mathcal R}}\int_\Omega\langle X(x,\varphi_r(x)),(D\varphi_r(x),-1)\rangle\,dx\,d\nu(r),
\end{equation}
where $\nu$ is 
{the} Lebesgue measure restricted to ${\mathcal R}$.
\end{lemma}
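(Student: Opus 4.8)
The plan is to recognize each $\varphi_r$ as the lower ``sheet'' of the level set $E_r:=\{(x,y)\in\Omega\times Y:\rho(x,y)\le r\}$, to show these level sets have finite perimeter for a.e.\ $r$, and to obtain \eqref{eq:divXvarphi} by applying the Gauss--Green formula to each $E_r$ and integrating in $r$. Since a representative of $\rho(x,\cdot)$ is nonincreasing (Lemma~\ref{lem:decreasingconstant}), the slice $E_r\cap(\{x\}\times Y)$ is a half-line with left endpoint $\varphi_r(x)$ (possibly empty), so $E_r$ differs from the supergraph $\{(x,y):y\ge\varphi_r(x)\}$ only by the $\mathcal L^{n+1}$-null graph of $\varphi_r$; in particular the left-hand side of \eqref{eq:divXvarphi} equals $\int_I\int_{E_r}\operatorname{div}X\,dx\,dy\,d\nu(r)$.

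The first genuine step is to show $\rho\in BV(\Omega\times Y)$. Combining Lemmas~\ref{lem:radon-nikodym} and~\ref{lem:dividentity}, for every $X\in C_c^\infty(\Omega\times Y;\R^{n+1})$ one has $\int_{\Omega\times Y}\rho\,\operatorname{div}X\,dx\,dy=S(\operatorname{div}X)=\mu(X)=\int\langle X(x,y),(z,-1)\rangle\,d\mu(x,y,z)$, and since $\operatorname{supp}\mu$ is compact the last expression is bounded by $\big(1+\sup_{\operatorname{supp}\mu}\|z\|\big)\,\mu(\Omega\times Y\times Z)\,\|X\|_\infty$ (cf.~\eqref{eq:finitemoment}); together with $\rho$ being essentially bounded (Lemma~\ref{lem:boundedrange}) this means $D\rho$ is a finite $\R^{n+1}$-valued Radon measure, i.e.\ $\rho\in BV$. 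Disintegrating $\mu$ over $\Omega\times Y$ moreover identifies $D\rho$ explicitly: its last component is (up to sign) the positive measure $(\projXY)_\#\mu$, and its $x_\ell$-component has density $\mathcal Z_\ell(x,y)$ (the $\mu$-conditional mean of $z_\ell$) with respect to it. In particular the ``horizontal'' part of $D\rho$ is bounded by $M:=\sup_{\operatorname{supp}\mu}\|z\|$ times its ``vertical'' part, which is everywhere nonzero. The coarea formula for $BV$ functions then gives: $E_r$ has finite perimeter for a.e.\ $r\in I$, and the Borel direction of $D\chi_{E_r}$ agrees $|D\rho|$-a.e.\ with that of $D\rho$ for a.e.\ such $r$.

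Next comes the regularity of the sheets, which I expect to be the main obstacle. For a.e.\ $r$, $E_r$ is a finite-perimeter supergraph, so $\varphi_r\in BV_{\mathrm{loc}}(\Omega)$ and $\partial^*E_r$ coincides $\mathcal H^n$-a.e.\ with the graph of $\varphi_r$. By the direction-matching above, the outer normal $\nu_{E_r}$ satisfies $|\nu_{E_r}^{\mathrm{horiz}}|\le M\,|\nu_{E_r}^{\mathrm{vert}}|$ with $\nu_{E_r}^{\mathrm{vert}}\neq 0$ for $\mathcal H^n$-a.e.\ point of $\partial^*E_r$; for the boundary of a supergraph this rules out the vertical pieces that would carry a singular part of $D\varphi_r$ and bounds the approximate gradient by $M$, so $\varphi_r\in W^{1,\infty}(\Omega)$ (in particular weakly differentiable) with $\|D\varphi_r\|_\infty\le M$, and $\varphi_r$ is finite and bounded because $\operatorname{supp}\mu$ is compact. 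This then extends from a.e.\ $r$ to every $r\in I$: since $r'\mapsto\varphi_{r'}(x)$ is monotone and the Lipschitz bound $M$ is uniform, each $\varphi_r$ is a monotone pointwise limit of $M$-Lipschitz sheets, hence itself $M$-Lipschitz. The delicate points are precisely the rigorous transfer of the directional bound on $D\rho$ to the reduced boundaries $\partial^*E_r$ (via the compatibility of the coarea formula with De Giorgi's structure theorem) and the book-keeping for the null set of exceptional $r$ and for sheets that are finite only on a subset of $\Omega$; these are presumably the content of the structural lemmas preceding this one.

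It then remains to assemble \eqref{eq:divXvarphi}, and in fact both sides reduce to $\int_I\int_{E_r}\operatorname{div}X\,dx\,dy\,d\nu(r)$: the left-hand side by the first paragraph, and the right-hand side because, for a.e.\ $r$, the Gauss--Green theorem for the finite-perimeter set $E_r$ gives $\int_{E_r}\operatorname{div}X\,dx\,dy=\int_{\partial^*E_r}\langle X,\nu_{E_r}\rangle\,d\mathcal H^n$ — with no term on $\partial\Omega\times Y$ since $X$ is compactly supported in $\Omega\times Y$ — and the graph structure of $\partial^*E_r$ (normal $(D\varphi_r(x),-1)/\sqrt{1+|D\varphi_r(x)|^2}$, area element $\sqrt{1+|D\varphi_r(x)|^2}\,dx$) rewrites the right side as $\int_\Omega\langle X(x,\varphi_r(x)),(D\varphi_r(x),-1)\rangle\,dx$. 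Integrating over $r\in I$ against $\nu$ is justified by Fubini: both integrands are jointly measurable in their variables — for the $E_r$-side because $\{(x,y,r):\rho(x,y)\le r\}$ is measurable, for the $\varphi_r$-side by monotonicity of $r\mapsto\varphi_r(x)$ together with a difference-quotient description of $D\varphi_r$ — and are bounded by $\|X\|_\infty(M\sqrt n+1)$ on the relevant compact ranges of $x$ and $r$. This yields \eqref{eq:divXvarphi}.
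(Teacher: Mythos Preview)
Your proposal is correct and follows the same overall strategy as the paper: establish $\rho\in BV(\Omega\times Y)$ via the identity $\int\rho\,\operatorname{div}X=\mu(X)$, invoke the coarea formula to obtain finite-perimeter level sets $E_r=\rho^{-1}(-\infty,r]$ for a.e.\ $r$, apply De Giorgi's structure theorem and the Gauss--Green formula, and then read off \eqref{eq:divXvarphi} from the graph parametrization of $\partial^*E_r$.

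The genuine point of departure is how you establish that $\partial^*E_r$ is a graph and that $\varphi_r$ is weakly differentiable. The paper defers this to two subsequent lemmas: Lemma~\ref{lem:function} shows, by a direct integral-identity argument involving $\mu$, that the set where the boundary normal $\eta_r$ has vanishing vertical component is $\mathcal H^n$-null; Lemma~\ref{lem:weakderivative} then computes the Jacobian of the graph map explicitly to identify $D\varphi_r$ with $\zeta_r$. You instead identify $D\rho$ explicitly by disintegrating $\mu$ over $\Omega\times Y$ (vertical component equal to $(\projXY)_\#\mu$, horizontal components bounded by $M=\sup_{\operatorname{supp}\mu}\|z\|$ times the vertical one) and then invoke the direction-matching in the vector-valued coarea formula to transfer this bound to each outer normal $\nu_{E_r}$, which immediately rules out vertical pieces and gives a uniform Lipschitz bound on $\varphi_r$. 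Your route is cleaner and yields $W^{1,\infty}$ with an explicit constant in one stroke, at the cost of invoking a stronger structural statement (the compatibility of the coarea decomposition with the polar factor of $D\rho$); the paper's route is more self-contained, using only the scalar coarea formula and explicit computations. Your extension from a.e.\ $r$ to every $r\in I$ via monotone limits of uniformly Lipschitz sheets is an additional observation not present in the paper, which only uses the a.e.\ statement downstream.
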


Observe that, since by Lemma \ref{lem:boundedrange} $\rho$ is essentially bounded, we may take a representative in the class of $\rho$ that is bounded, and then $\varphi_r(x)$ is finite for each $x\in \Omega$.

\begin{proof}

 Consider the set $C^1_c(\Omega\times Y;\R^{n+1})$ of compactly-supported vector fields that are continuously differentiable. Observe that since these vector fields are compactly supported, they vanish on the boundary $\partial\Omega\times Y$.
 Consider also the set $B$ of vector fields $X\in C^1_c(\Omega\times Y;\R^{n+1})$ satisfying $\sup_{\Omega\times Y}\|X(x,y)\|\leq 1$.

 Since we have a uniform bound %
 for $X\in B$, by Lemma \ref{lem:radon-nikodym}, \eqref{eq:defS}, Lemma \ref{lem:dividentity}, \eqref{eq:defmuX}, the Cauchy-Schwarz inequality, and the finiteness of $\mu$ together with \eqref{eq:finitemoment s},  
 \begin{multline*}
\left|
 \int \rho\operatorname{div} X\,dx\,dy\right|
  =| S(\operatorname{div}X)|=|{\mathcal C_\mu}(X)|=\\
  \left|\int \langle X(x,y),(z,-1)\rangle\,d\mu(x,y,z)\right|
  \leq \int 1+\|z\|\,d\mu(x,y,z)<+\infty,
 \end{multline*}
 we conclude that $\rho$ is a \emph{function of bounded variation} (see \cite[Def. 5.1]{evansgariepy}) in $\Omega\times Y$.
It follows from the coarea formula \cite[Thm. 5.9]{evansgariepy} that there is a set of full measure $A\subset {\mathcal R}$, $|{\mathcal R}\setminus A|=0$, such that if $r\in A$ then $\rho^{-1}(-\infty,r]\subset \Omega\times Y$ is a \emph{set of locally finite perimeter} (\cite[Def. 5.1]{evansgariepy}, \cite[Ch. 12]{maggi2012sets}), meaning that 
 \[\sup_{X\in B}\int_{\rho^{-1}(-\infty,r]}\operatorname{div}X(x,y)\,dx\,dy<+\infty, \quad r\in A.\]
By \cite[Prop. 12.1]{maggi2012sets}, there exists an $\R^{n+1}$-valued measure $\bm\nu_r$ with bounded total variation $|{\bm\nu}_r|$ (defined in \cite[Rmk. 4.12]{maggi2012sets}), $|{\bm\nu}_r|(\overline{\Omega\times Y})<+\infty$,  such that, for $X\in C^1_c(\Omega\times Y;\R^{n+1})$,
 \[\int_{\rho^{-1}(-\infty,r]}\operatorname{div}X\,dx\,dy=\int_{\Omega\times Y}X\cdot d{\bm\nu}_r  = \sum_{i=1}^{n+1} \int_{\Omega\times Y} X_i d{\bm\nu}_{r,i}\]
 De Giorgi's Structure Theorem (\cite[Th. 5.15 and 5.16]{evansgariepy} or \cite[Th. 15.9]{maggi2012sets}) then implies that ${\bm\nu}_r$ is supported on the boundary $\partial \rho^{-1}(-\infty,r]$, that this boundary is of Hausdorff dimension $n$, and that the unit normal $\eta_r$ to the boundary of $\rho^{-1}(-\infty,r]$ is well defined for almost every point $(x,y)$ on the boundary with respect to Hausdorff measure $H^n$ of dimension $n$ by
 \begin{equation}\label{eq:defeta}
  \eta_r(x,y)=\lim_{b\searrow 0}\frac{{\bm\nu}_r(D((x,y),b))}{|{\bm\nu}_r|(D((x,y),b))}
 \end{equation}
 where $D((x,y),b)$ denotes the ball centered at $(x,y)$ of radius $b>0$ and $|{\bm\nu}_r|$ denotes the total variation of ${\bm\nu}_r$. Refer to Figure \ref{fig:2}.
 
 \begin{figure}
 \includegraphics[width=6.5cm]{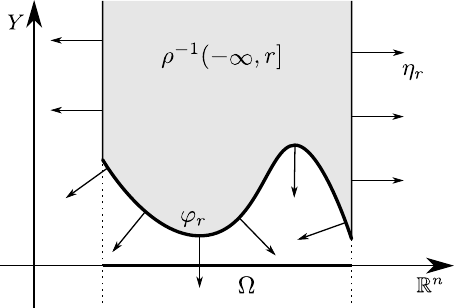}
 \centering
 \caption{The sets $\rho^{-1}(-\infty,r]$ and their exterior unit normal $\eta_r$, as in the situation of Lemma \ref{lem:main}.}
  \label{fig:2} 
 \end{figure}
 
 Also, the Gauss--Green formula holds: for {a compactly supported vector field} $X\in C^1(\overline{\Omega\times Y},\R^{n+1})$,
 \begin{equation}\label{eq:normal}
  \int_{\rho^{-1}(-\infty,r]}\operatorname{div}X\,dx\,dy=\int_{\partial \rho^{-1}(-\infty,r]} \langle X,\eta_r\rangle d H^n.
 \end{equation}
  Indeed, this is equivalent to \cite[eq. (15.11)]{maggi2012sets}, summing over all the entries in that vector-valued equation; cf. \cite[Rmk. 12.2]{maggi2012sets}.
 
 From Lemma \ref{lem:function} below and Remark \ref{rmk:function}, it follows that $H^n$-almost all the boundary $(\partial \rho^{-1}(-\infty,r])\cap (\Omega\times Y)$ corresponds to the graph of $\varphi_r$, i.e.,
 \[H^n((\partial \rho^{-1}(-\infty,r])\cap (\Omega\times Y)\setminus 
 \{(x,\varphi_r(x)):x\in\Omega\})=0.\]
 
 Let $\zeta_r\colon\Omega\to \R^n$ be the vector field whose $i$-th entry is given by
 \begin{equation}\label{eq:defzeta}
 [\zeta_r(x)]_i=-\frac{[\eta_r(x,\varphi_r(x))]_i}{[\eta_r(x,\varphi_r(x))]_{n+1}},\quad 1\leq i\leq n,
 \end{equation}
 if the denominator is $\neq 0$, and $[\zeta_r(x)]_i=\operatorname{sign}([\eta_r(x)]_i)\infty$ otherwise. It follows from Lemma \ref{lem:weakderivative} below that the denominator in \eqref{eq:defzeta} is almost-everywhere nonzero, and that $\zeta_r$ is the weak derivative of $\varphi_r$. 
 
 Equality \eqref{eq:divXvarphi} 
 follows from  \begin{align*}
      \int_{{\mathcal R}}\int_{\{y\geq \varphi_r(x)\}}\operatorname{div}X\,dx\,dy\,d\nu(r)
       &=\int_{{\mathcal R}}\int_{\partial\rho^{-1}(-\infty,r]}\langle X,\eta_r\rangle dH^n\,d\nu(r)\\
       &=\int_{{\mathcal R}}\int_{\Omega}\langle X(x,\varphi_r(x)),(\zeta_r(x),-1)\rangle dx\,d\nu(r),
  \end{align*}
  which is \eqref{eq:normal} and Lemma \ref{lem:weakderivative}\ref{it:weakderivative2}, together with $\zeta_r$ being the weak derivative  $D\varphi_r$.
\end{proof}

 \begin{lemma}\label{lem:samederivatives2}
  If $r\leq r'\leq 0$ and  $x_0\in\Omega$ is such that $\varphi_r(x_0)=\varphi_{r'}(x_0)$ and $\eta_r$ and $\eta_{r'}$ are defined at $x_0$, then $\eta_r(x,\varphi_r(x_0))=\eta_{r'}(x,\varphi_{r'}(x_0))$.
 \end{lemma}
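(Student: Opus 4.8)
The plan is to exploit the monotone structure of the sets $\rho^{-1}(-\infty,r]$ in the $y$-direction together with the fact that $\varphi_r$ and $\varphi_{r'}$ agree at the single point $x_0$. First I would recall from Lemma \ref{lem:decreasingconstant} that, for almost every $x$, the function $y\mapsto\rho(x,y)$ is non-increasing; consequently, since $r\le r'$, the superlevel sets are nested: $\rho^{-1}(-\infty,r]\subseteq\rho^{-1}(-\infty,r']$, and moreover $\{y:\rho(x,y)\le r\}\subseteq\{y:\rho(x,y)\le r'\}$ is an ``upper'' set in each fiber, so that $\varphi_r(x)\ge\varphi_{r'}(x)$ for all $x$ (this is the monotonicity already asserted at the end of Theorem \ref{thm:decomposition}). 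Thus the graph of $\varphi_r$ lies above (or on) the graph of $\varphi_{r'}$, and the two touch precisely at those $x$ where equality holds, in particular at $x_0$.

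The key geometric observation is this: near the touching point $(x_0,\varphi_r(x_0))=(x_0,\varphi_{r'}(x_0))$, the set $E_{r'}:=\rho^{-1}(-\infty,r']$ contains the set $E_r:=\rho^{-1}(-\infty,r]$, and because both are (locally) epigraph-type regions bounded below by the graphs of $\varphi_r$ and $\varphi_{r'}$ respectively, the inclusion $E_r\subseteq E_{r'}$ combined with the two boundaries passing through the same point forces the boundaries to be tangent there in the measure-theoretic sense. Concretely, I would argue at the level of the blow-up (tangent cone) used in De Giorgi's structure theorem: at an $H^n$-point $(x_0,\varphi_r(x_0))$ where $\eta_r$ is defined, the rescalings of $E_r$ converge to the half-space $H_r=\{(v,t): \langle(v,t),\eta_r(x_0,\varphi_r(x_0))\rangle\le 0\}$, and similarly the rescalings of $E_{r'}$ converge to the half-space $H_{r'}$ determined by $\eta_{r'}$ at the same point. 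Since $E_r\subseteq E_{r'}$ at every scale, passing to the limit gives $H_r\subseteq H_{r'}$; but two half-spaces through the origin with one contained in the other must be equal, hence $\eta_r(x_0,\varphi_r(x_0))=\eta_{r'}(x_0,\varphi_{r'}(x_0))$, which is the claim.

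To make the blow-up argument rigorous I would use the characterization of the reduced boundary via densities: at such a point, $E_r$ has density $1/2$ and, by \cite[Thm. 5.15, 5.16]{evansgariepy} (equivalently \cite[Thm. 15.9]{maggi2012sets}), the blow-ups converge in $L^1_{\mathrm{loc}}$ to the half-space $H_r$; the same applies to $E_{r'}$ provided $x_0$ is also a reduced-boundary point of $E_{r'}$, which is exactly the hypothesis that $\eta_{r'}$ is defined at $x_0$. The inclusion $E_r\subseteq E_{r'}$ is preserved under the rescaling maps, and $L^1_{\mathrm{loc}}$-limits of nested sets remain nested (up to null sets), so $|H_r\setminus H_{r'}|=0$, forcing $H_r=H_{r'}$ as half-spaces and hence equality of the two outer unit normals. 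I expect the main obstacle to be purely technical bookkeeping: one must make sure the ``epigraph'' description of $E_r$ near the touching point — supplied by Lemma \ref{lem:function} and Remark \ref{rmk:function}, which identify the boundary with the graph of $\varphi_r$ up to an $H^n$-null set — is available at the specific point $x_0$ and is compatible with the reduced-boundary blow-up, rather than any deep new estimate; the inclusion of half-spaces then closes the argument immediately.
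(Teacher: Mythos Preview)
Your argument is correct. The nesting $E_r\subseteq E_{r'}$ together with the blow-up characterization of the reduced boundary indeed forces the two tangent half-spaces at the common point to coincide, and hence the measure-theoretic outer normals agree. The only cosmetic point is that you do not actually need the epigraph description from Lemma~\ref{lem:function} or Remark~\ref{rmk:function} for this step: the inclusion $E_r\subseteq E_{r'}$ and the hypothesis that both normals are defined at the point already suffice for the half-space comparison, so the ``technical bookkeeping'' you anticipate is lighter than you suggest.

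The paper's own proof is a one-line citation of \cite[Thm.~5.13]{evansgariepy}, which packages exactly this principle (that the measure-theoretic normal of the superlevel sets of a BV function is determined by the function at the point and not by the particular level). Your route is the same idea unpacked: you reprove the relevant consequence of that theorem via the De Giorgi blow-up rather than invoking it as a black box. The advantage of your version is that it is self-contained and makes the geometric mechanism transparent; the paper's version is shorter but opaque to a reader who does not have the reference at hand.
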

 \begin{proof}
  This follows immediately from \cite[Th. 5.13]{evansgariepy}.
 \end{proof}
 
 \begin{lemma}\label{lem:function}
 For $r\in {\mathcal R}$, let $V_r$ be the set of points $(x,y)\in\Omega\times Y$ such that $\eta_r(x,y)$ is defined and its last coordinate is $[\eta_r(x,y)]_{n+1}=0$. For almost every $r\in {\mathcal R}$, we have 
 \[H^n(V_r)=0.\]
 \end{lemma}
 \begin{remark}\label{rmk:function}
  The statement of Lemma \ref{lem:function} means that the graph of the function $\varphi_r$ defined in the statement of Lemma \ref{lem:main} coincides with $(\partial \rho^{-1}(-\infty,r])\cap\Omega\times Y$ almost everywhere.
 \end{remark}
  \begin{figure}
 \includegraphics[width=6.5cm]{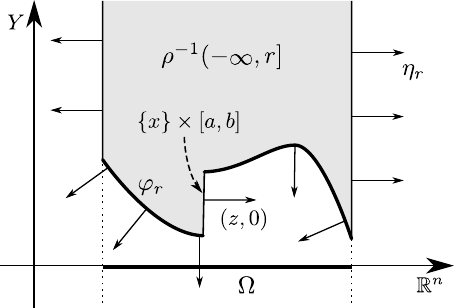}
 \centering
 \caption{When there is a vertical segment $\{x\}\times[a,b]$ 
 in the boundary $\partial \rho^{-1}(-\infty,r]$, the normal vector is horizontal, that is, of the form $(z,0)$, $z\in\R^n$. The proof of Lemma \ref{lem:function} shows that the $n$-dimensional volume of the union of these segments is zero.}
  \label{fig:3}
 \end{figure}
 \begin{proof}
 From the boundedness of $\rho$ (Lemma \ref{lem:boundedrange}) and the fact that $\lim_{y\to-\infty}\rho(x,y)=0$ and $\lim_{y\to+\infty}\rho(x,y)=\inf {\mathcal R}$ (see Lemma \ref{lem:decreasingconstant}), it follows that, for each $r\in {\mathcal R}$ and $x\in \Omega$, there is some $y\in\R$ such that $(x,y)\in \partial\rho^{-1}(-\infty,r]$. %
 Since $\rho$ is non-increasing (Lemma \ref{lem:decreasingconstant}), given $x\in\Omega$ the set of values $y\in Y$ with $(x,y)\in \partial\rho^{-1}((-\infty,r])$ must be an interval.

 {Let $I=[\inf\mathcal R,\sup\mathcal R]$ be the closed convex hull of $\mathcal R=\rho(\Omega\times Y)$; it is a compact interval (cf.comments after Lemma \ref{lem:boundedrange}).}
 By equation \eqref{eq:normal}, {the Cavalieri principle}\footnote{\label{fn:cavalieri}{If $\mathbb X\subset \R^n$ and the functions $f\colon \mathbb X\to \R_+$ and $g\colon \mathbb X\to \R$ are integrable, as is their product $fg$, then by the Fubini theorem, $\int_0^\infty\int_{f^{-1}[t,+\infty)}g(x)\,dx\,dt
 =\int_0^\infty\int_{\mathbb X}\chi_{f^{-1}[t,+\infty)}(x)g(x)\,dx\,dt
 =\int_{\mathbb X}\int_0^\infty\chi_{f^{-1}[t,+\infty)}(x)g(x)\,dt\,dx
 =\int_{\mathbb X}f(x)g(x)\,dx$. We apply this with $f=-\rho$, $g=\operatorname{div}X$, $\mathbb X=\Omega\times Y$.}}, Lemmas \ref{lem:radon-nikodym} and \ref{lem:dividentity} as well as definitions \eqref{eq:defmuX} and \eqref{eq:defS},
 \begin{align}
  \notag \int_{I}\int_{\partial \rho^{-1}(-\infty,r]} \langle X,\eta_r\rangle d H^n\,dr
  \notag &=\int_{I}\left(\int_{\rho^{-1}(-\infty,r]} \operatorname{div}X\,dx\,dy\right)dr \\
  \notag &=-\int\rho\operatorname{div}X\,dx\,dy\\
  \notag &=-S(\operatorname{div} X)\\
  \notag &={\mathcal C_\mu}(X)\\
  &=\int_{\Omega\times Y\times Z} \langle X(x,y),(z,-1)\rangle\,d\mu(x,y,z)
  \label{eq:normalid}
 \end{align}
 for all $X\in C^1_c(\Omega\times Y;\R^{n+1})$. By the density 
 of $C^1_c(\Omega\times Y;\R^n)$ in $L^1((\projXY)_\#\mu;\R^n)$ \cite[Th.~8.14]{folland1999real} and dominated convergence, this holds as well for vector fields $X$ in the latter space.
 
 Let $X=({\vec0_{\R^n}},\chi_A)$, with $A=\bigcup_rV_r$. Then, by \eqref{eq:normalid},
 \begin{equation}\label{eq:Anegligible}
 \begin{aligned}
 \mu(A)=&\int_{\Omega\times Y\times Z}\chi_A(x,y)d\mu(x,y,z)\\
 &=-\int_{\Omega\times Y\times Z}\langle X(x,y),(z,-1)\rangle\,d\mu(x,y,z)\\
 &= - \int_{I}\int_{\partial\rho^{-1}(-\infty,r]}\langle X,\eta_r\rangle\,dH^n\,dr\\
 &=-\int_{I}\int_{\partial\rho^{-1}(-\infty,r]}\chi_A[\eta_r]_{n+1}\,dH^n\,dr=0,
 \end{aligned}
 \end{equation}
 since $[\eta_r]_{n+1}(x,y)=0$ whenever $(x,y)\in V_r$; by Lemma \ref{lem:samederivatives2} this happens whenever $(x,y)\in A$ because $\eta_r(x,y)$ is independent of $r$ (among those values $r\in {\mathcal R}$ such that $y=\varphi_r(x)$, which is to say, $(x,y)\in\partial\rho^{-1}(-\infty,r]$).
 Then we have, again using \eqref{eq:normalid} and $\|\eta_r\|=1$,
 \begin{multline*}
     {\int_{\mathcal R}H^n(V_r)\,dr\leq}
     \int_{I}H^n(V_r)\,dr
     =\int_{I}\int_{\partial\rho^{-1}(-\infty,r]}\chi_A\,dH^n\,dr\\
     =\int_{I}\int_{\partial\rho^{-1}(-\infty,r]}\langle \chi_A\eta_r,\eta_r\rangle\,dH^n\,dr
     =-\int_{\Omega\times Y\times Z}\chi_A\langle \eta_r,(z,-1)\rangle\,d\mu(x,y,z)=0,
 \end{multline*}
 {where the last term vanishes because $A$ is a negligible set with respect to $\mu$, by \eqref{eq:Anegligible}.}
 This is what we wanted to show.
\end{proof}

 \begin{lemma}\label{lem:weakderivative}
 \begin{enumerate}[label=\roman*.,ref=(\roman*)]
 The following statements are true for almost every $r\in {\mathcal R}$:
  \item \label{it:weakderivative1}The vector field $\zeta_r$ defined in \eqref{eq:defzeta} is minus the weak derivative of $\varphi_r$, that is, 
  \[\int_\Omega\varphi_r(x)\nabla\phi(x)\,dx=\int_\Omega\phi(x)\zeta_r(x)\,dx\]
  for all $\phi\in C^\infty_c(\Omega)$.
  \item \label{it:weakderivative2} We also have, for $X\in C^1_c(\Omega\times Y;\R^{n+1})$, %
  \[\int_{\partial\rho^{-1}(-\infty,r]}\langle X,\eta_r\rangle dH^n
      =\int_{\Omega}\langle X(x,\varphi_r(x)),(\zeta_r(x),-1)\rangle dx.\]
  \end{enumerate}
 \end{lemma}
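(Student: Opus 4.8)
The plan is to prove the two parts together, treating part \ref{it:weakderivative2} as essentially a ``slicing'' computation that then yields part \ref{it:weakderivative1} as a special case. Throughout, fix a good value of $r$ --- one for which $\rho^{-1}(-\infty,r]$ is a set of locally finite perimeter, De Giorgi's structure theorem applies, the unit normal $\eta_r$ is defined $H^n$-a.e.\ on the reduced boundary, and (by Lemma \ref{lem:function} and Remark \ref{rmk:function}) the reduced boundary of $\rho^{-1}(-\infty,r]$ inside $\Omega\times Y$ agrees $H^n$-a.e.\ with the graph $\{(x,\varphi_r(x)):x\in\Omega\}$. All of these hold for a.e.\ $r\in I$, which is all that is claimed.

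First I would establish \ref{it:weakderivative2}. Start from the Gauss--Green formula \eqref{eq:normal} applied to $X\in C^1_c(\Omega\times Y;\R^{n+1})$:
\[\int_{\rho^{-1}(-\infty,r]}\operatorname{div}X\,dx\,dy=\int_{\partial\rho^{-1}(-\infty,r]}\langle X,\eta_r\rangle\,dH^n.\]
On the left-hand side I would use Fubini: for each fixed $x\in P_r$ (so the slice is a half-line $\{x\}\times[\varphi_r(x),+\infty)$ up to a null set by the monotonicity of $\rho$ from Lemma \ref{lem:decreasingconstant} and Lemma \ref{lem:function}), the inner integral in the $y$-variable of $\partial X_{n+1}/\partial y$ telescopes by the fundamental theorem of calculus to $-X_{n+1}(x,\varphi_r(x))$ (the term at $y=+\infty$ vanishes by compact support), and the horizontal divergence terms $\sum_{i\le n}\partial X_i/\partial x_i$ integrate over the region $\{y\ge\varphi_r(x)\}$. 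This is exactly the computation already carried out in the displayed chain at the end of the proof of Lemma \ref{lem:main}, where it is shown that
\[\int_I\int_{\{y\ge\varphi_r(x)\}}\operatorname{div}X\,dx\,dy\,d\nu(r)=\int_I\int_\Omega\langle X(x,\varphi_r(x)),(\zeta_r(x),-1)\rangle\,dx\,d\nu(r).\]
The point is to run that identity at the level of a single (a.e.)\ value of $r$ rather than integrated against $\nu$. To do this I would note that on the reduced boundary the unit normal $\eta_r$ is, by definition \eqref{eq:defzeta} of $\zeta_r$, proportional to $(\zeta_r(x),-1)$ wherever its last component is nonzero; and the set where the last component vanishes projects to $\Omega\setminus P_r$, which by Lemma \ref{lem:function} carries zero $H^{n-1}$-volume in $\Omega$ and hence --- being a union of vertical segments --- zero $H^n$-volume in $\Omega\times Y$, so it contributes nothing to the boundary integral. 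Writing the proportionality constant explicitly, the surface measure $H^n$ restricted to the graph pushes forward under $\proj_\Omega$ to Lebesgue measure $dx$ times the Jacobian factor $\sqrt{1+\|\zeta_r(x)\|^2}$, while $\langle X,\eta_r\rangle$ picks up the reciprocal of that same factor from normalizing $(\zeta_r,-1)$; the two cancel, giving precisely
\[\int_{\partial\rho^{-1}(-\infty,r]}\langle X,\eta_r\rangle\,dH^n=\int_\Omega\langle X(x,\varphi_r(x)),(\zeta_r(x),-1)\rangle\,dx.\]
Combining with the Gauss--Green identity and the Fubini telescoping gives \ref{it:weakderivative2}.

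Then \ref{it:weakderivative1} follows by a careful choice of test field. Given $\phi\in C^\infty_c(\Omega)$, apply \ref{it:weakderivative2} with $X(x,y)=(\phi(x)\psi(y)e_i,0)$ (for $1\le i\le n$) and with $X(x,y)=(0,\dots,0,\phi(x)\Psi(y))$, where $\psi\in C^\infty_c(Y)$ is a cutoff identically $1$ on a neighborhood of the (compact) $y$-range of the graph over $\operatorname{supp}\phi$ and $\Psi$ is its primitive. For the first choice, $\operatorname{div}X=\phi(x)\psi(y)\,\partial_{x_i}$-type terms integrate over $\{y\ge\varphi_r(x)\}$ and, since $\psi\equiv1$ there, reduce to $\int_\Omega (\partial\phi/\partial x_i)(x)\cdot(\text{constant in }y)\,dx$ against the vertical extent, which after the same telescoping and the graph identity yields $\int_\Omega \varphi_r(x)\,\partial_i\phi(x)\,dx$ on one side and $\int_\Omega \phi(x)[\zeta_r(x)]_i\,dx$ on the other --- this is the integration-by-parts identity defining the weak derivative. (Equivalently, and more cleanly: apply \ref{it:weakderivative2} to $X=(\phi(x)e_i,\,\text{chosen }(n{+}1)\text{-component})$ arranged so that $\operatorname{div}X\equiv$ something supported near the graph, and read off the weak-derivative relation directly; the bookkeeping is the standard ``boundary of a subgraph is the graph of the function, and the first variation of the subgraph's volume is minus the first variation of the function'' calculation.) In all cases one uses that $\varphi_r\in L^\infty$ (from Lemma \ref{lem:boundedrange}) so the integrals converge, and that the exceptional sets where $\eta_r$ is horizontal are negligible.

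The main obstacle --- and the place to be careful --- is the interchange between the $H^n$ surface integral over the reduced boundary and the Lebesgue integral over $\Omega$ via the graph parametrization. This requires: (i) knowing the reduced boundary is a.e.\ a graph, which is exactly Lemma \ref{lem:function}/Remark \ref{rmk:function}; (ii) controlling the ``vertical'' part of the boundary, again Lemma \ref{lem:function}, together with Lemma \ref{lem:samederivatives2} to ensure $\eta_r$ is well-defined and $r$-independent there so the set can be handled uniformly; and (iii) the measure-theoretic area formula identifying $(\proj_\Omega)_\#(H^n\lfloor\text{graph})$ with $\sqrt{1+\|\zeta_r\|^2}\,dx$ and the cancellation with the normalization of $(\zeta_r,-1)$ --- this is where finiteness of $\int\|z\|\,d\mu$ (hence a.e.\ finiteness of $\zeta_r$) is used to guarantee $\zeta_r$ is genuinely an $\R^n$-valued, a.e.-finite function rather than taking infinite values on a non-null set. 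Once these three points are in hand, everything reduces to the Gauss--Green formula plus Fubini, both already available.
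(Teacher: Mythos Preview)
Your approach is correct but genuinely different from the paper's. For part \ref{it:weakderivative2} you invoke the area (coarea) formula for rectifiable sets to identify the pushforward $(\proj_\Omega)_\#(H^n\lfloor\partial^*\rho^{-1}(-\infty,r])$ directly, obtaining the Jacobian $\sqrt{1+\|\zeta_r\|^2}$ from the tangential Jacobian $|[\eta_r]_{n+1}|$ of the projection on the approximate tangent plane; the cancellation with the normalization of $(\zeta_r,-1)$ is then immediate. The paper avoids this black box: it first proves the abstract absolute continuity $m_2^r\ll m_1^r$ by a covering argument, extracts an unknown Radon--Nikodym density $J_r$, and then runs a geometric computation (writing $\varphi_r(x)=\int_0^{\varphi_r(x)}dy$, splitting the region $\{0\le y\le\varphi_r(x)\}$ as a difference of two half-spaces, and applying Gauss--Green to each; see their Figure~4) to obtain a vector identity whose $(n{+}1)$-th component forces $J_r=-1/[\eta_r]_{n+1}$. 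Both \ref{it:weakderivative1} and \ref{it:weakderivative2} then fall out of the remaining components. Your route is shorter and more geometric but imports the area formula for rectifiable sets; the paper's is more self-contained, relying only on De~Giorgi's structure theorem and elementary measure theory. One caution: your derivation of \ref{it:weakderivative1} from \ref{it:weakderivative2} via $X=(\phi(x)\psi(y)e_i,0)$ does work (the integral $\int_{\varphi_r(x)}^\infty\psi\,dy$ equals a constant minus $\varphi_r(x)$ on $\operatorname{supp}\phi$, and the constant disappears against $\int\partial_i\phi=0$), but your write-up leaves this bookkeeping implicit; it is worth spelling out, since the parenthetical ``equivalently, and more cleanly'' alternative you gesture at is not actually specified.
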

 \begin{proof}
 For almost every $r\in {\mathcal R}$, $\eta_r$ is well defined on almost all the boundary $\partial\rho^{-1}(-\infty,r]$, and by Lemma \ref{lem:function}, the set $V_r$ of points $(x,y)\in\partial\rho^{-1}(-\infty,r]$ with $[\eta_r(x,y)]_{n+1}=0$ has $n$-dimensional Hausdorff measure zero, $H^n(V_r)=0$. Let $r\in {\mathcal R}$ be a number that enjoys those properties.
 
 Denote by $m_1^r$ the Hausdorff measure $H^n$ on the boundary $(\Omega\times Y)\cap\partial \rho^{-1}(-\infty,r]$. The set $(\Omega\times Y)\cap\partial \rho^{-1}(-\infty,r]$ is depicted in Figure \ref{fig:2} this set is depicted as a thickened curve; since $\Omega$ is open, it does not include any points in $(\partial\Omega)\times Y$.
 Observe that $m_1^r$ also \emph{is equal to its restriction to the graph of $\varphi_r$,}
 \begin{equation}\label{eq:restriction}
  m^r_1=m^r_1|_{(\Omega\times Y)\cap\partial \rho^{-1}(-\infty,r]\setminus V_r}=m^r_1|_{\operatorname{graph}(\varphi_r)}
 \end{equation}
 because $H^n(V_r)=0$, and $V_r$ contains the part of $\partial\rho^{-1}(-\infty,r]$ corresponding to discontinuities of $\varphi_r$; see Figure \ref{fig:3}. 
 
 Denote by $m_2^r$ the pushforward of 
{the} Lebesgue measure on $\Omega$ by the map $\Psi(x)=(x,\varphi_r(x))$, that is, $m_2^r$ is a measure on $\Omega \times Y$ given by
 \[m_2^r=\Psi_{\#}dx|_{\Omega}.\] 
 
 The measure $m_2^r$ is absolutely continuous with respect to $m_1^r$. Indeed, if $A\subset\Omega\times Y$ is a measurable set of zero $m_1^r$ measure, this means that for every $\varepsilon>0$, $A$ can be covered with countably many balls $U_1,U_2,\dots$ such that $\sum_{i=1}^\infty(\operatorname{diam} U_i)^n\leq \varepsilon$, according to the definition of $n$-dimensional Hausdorff measure. The image $\projX{}(A)$ through the projection $\projX{}\colon\Omega\times Y\to\Omega$ can then be covered by the projections of the balls $\projX{}(U_i)$, which will still satisfy (for some $C>0$ dependent only on $n$)
 \[m^r_2(A)\leq m^r_2({\textstyle\bigcup_{i=1}^\infty U_i})\leq |\textstyle\bigcup_{i=1}^\infty \projX{}(U_i)|\leq C\sum_{i=1}^\infty(\operatorname{diam}\projX{}(U_i))^n=C\sum_{i=1}^\infty(\operatorname{diam}U_i)^n\leq \varepsilon,\] 
 and hence $A$ is a set of measure zero with respect to $m_2^r$. This proves the absolute continuity of $m_2^r$ with respect to $m_1^r$.
 
 By the Radon-Nikodym theorem, there is a measurable function $G_r(x,y)$ such that $m^r_2=G_rm^r_1$.
 The measure $m_2^r$ is also supported in $(\partial \rho^{-1}(-\infty,r])\cap(\Omega\times Y)$. For any measurable set $B\subset \Omega$, we have 
 \[\int_{\Omega\times Y}\chi_{B}(x)\,G_r(x,y)\,dm_1^r(x,y) =\int_{B\times Y}G_r(x,y)\,dm_1^r(x,y)=m_2^r(B\times Y)=|B|.\]
  This means that $(\projX)_\#(G_rm^r_1)$ is 
{the} Lebesgue measure on $\Omega$. From \eqref{eq:restriction} it follows that also $(\projX)_\#(G_rm^r_1|_{\operatorname{graph}(\varphi_r)})$ is 
{the} Lebesgue measure on $\Omega$, and this means that, for almost every $x\in \Omega$, $G_r(x,\varphi_r(x))>0$. Set
  \[J_r(x,y)=\begin{cases}1/G_r(x,y),&G_r(x,y)>0,\\0,&\text{else,}\end{cases}\]
  for $(x,y)\in \partial\rho^{-1}(-\infty,r]$; we then have $m_1^r=J_rm_2^r$.
 
 With $J_r$ defined like this, we have %
 for all measurable functions  $f\colon\Omega\times Y\to\R$, 
 \begin{multline}\label{eq:jacobian}
  \int_{\partial\rho^{-1}(-\infty,r]}f(x,y)\eta_r(x,y)\,dH^n=\int_{\partial\rho^{-1}(-\infty,r]}f(x,y)\eta_r(x,y)\,dm^r_1\\
  =\int_{\partial\rho^{-1}(-\infty,r]}f(x,y)\eta_r(x,y)J_r(x,y)\,dm_2^r\\=
  \int_\Omega f(x,\varphi_r(x))\eta_r(x,\varphi_r(x))J_r(x,\varphi_r(x))dx.
 \end{multline}

 From \eqref{eq:normal} it follows that
 \[\int_{\{y\geq \varphi_r(x)\}}\operatorname{div}X\,dx\,dy=\int_{\partial \rho^{-1}(-\infty,r]} \langle X,\eta_r\rangle d H^n,\quad X\in C^\infty_c(\overline{\Omega\times Y};\R^{n+1}),\]
 or equivalently, we have the vector-valued identity (proved from the above by taking $X=\phi \,e_i$ for each vector $e_i$ in the standard basis)
 \begin{equation}\label{eq:previouslydisplayedidentity}
 \int_{\{y\geq \varphi_r(x)\}}\nabla \phi(x,y)\,dx\,dy=\int_{\partial \rho^{-1}(-\infty,r]} \phi\, \eta_r\, d H^n,\quad \phi\in C^\infty_c(\overline{\Omega\times Y};\R).
 \end{equation}
 Take $N>0$ large enough that $\operatorname{supp}\mu\subset\overline\Omega\times[-N,N]$, and take a function $\psi\in C^\infty_c(\overline{\Omega\times Y})$, $0\leq\psi\leq 1$, such that $\psi(x,y)=1$ for all $|y|\leq N$. 
  Then, if we let $\mathbf n$ denote the unit normal to the boundary of $\Omega\times [0,+\infty)\subset\Omega\times Y$,  using \eqref{eq:jacobian}, and computing the derivatives below as in $\Omega\times Y$, so that for example $\nabla \phi(x)=\nabla_{x,y} \phi(x)=(\frac{\partial\phi}{\partial x_1},\dots,\frac{\partial\phi}{\partial x_n},0)$ to account for $\frac{\partial\phi}{\partial y}=0$, we have
 \begin{align*}
     \int_\Omega&\varphi_r(x)\nabla\phi(x)\,dx= \int_\Omega\int_0^{\varphi_r(x)}\nabla \phi(x)\,dy\,dx\\
     &=\int_\Omega\int_0^{\varphi_r(x)}\nabla (\psi\phi)(x,y)\,dy\,dx\\
     \notag &=\int_{\{(x,y)\in\Omega\times Y:0\leq y\leq \varphi_r(x)\}}\nabla(\psi\phi)\,dx\,dy
     -\int_{\{(x,y)\in\Omega\times Y:0\geq y\geq \varphi_r(x)\}}\nabla(\psi\phi)\,dx\,dy \\
     \notag &=\int_{\{(x,y)\in\Omega\times Y:y\geq 0\}}\nabla(\psi\phi)\,dx\,dy-\int_{\{(x,y)\in\Omega\times Y:y\geq \varphi_r(x)\}}\nabla(\psi\phi)\,dx\,dy \\
     \notag &=\int_{\partial\{(x,y)\in\Omega\times Y:y\geq 0\}} \psi\phi\mathbf{n}\,dH^n-\int_{\partial\rho^{-1}(-\infty,r]} \psi\phi\eta_r\,dH^n\\
     \notag &=\int_{\Omega\times \{0\}} \phi\mathbf{n}\,dH^n-\int_{\{(x,\varphi_r(x)):x\in\Omega\}} \phi\eta_r\,dH^n\\
     &=\int_\Omega \phi(x)e_{n+1}dx+\int_\Omega\phi(x)\eta_r(x)J_r(x,\varphi_r(x))%
     dx
 \end{align*}
  for all $\phi\in C^\infty_c(\Omega;\R)$; here, we have first used the fact that $\varphi_r(x)=\int_0^{\varphi_r(x)}dy$. Then we introduced $\psi$, and we separated the positive and negative parts of $\varphi_r$. We have expressed them in a slightly different form (see Figure \ref{fig:4}), and then passed to the boundary using \eqref{eq:previouslydisplayedidentity} and its equivalent for the domain $\{y\geq 0\}\subset \overline{\Omega\times Y}$. In the next-to-last line we used that $\psi=1$ in the  domain of integration and we have kept only the parts that do not cancel out from the fact that $\phi$ is compactly supported in $\Omega$, namely, the sets $\Omega\times \{0\}\subset\partial\{y\geq 0\}$, whose normal is $e_{n+1}$,  and  $\varphi_r(\Omega)$, whose normal is $\eta_r$;  $J_r(x)$ comes in %
  once we apply \eqref{eq:jacobian}.
  In other words, we have
  \begin{equation}\label{eq:vectident}
   \int_\Omega\varphi_r(x)\begin{pmatrix}\nabla_x \phi(x)\\0\end{pmatrix}dx
   =\int_\Omega\phi(x)e_{n+1}dx+\int_\Omega\phi(x)\eta_r(x)J_r(x,\varphi_r(x))dx,%
  \end{equation}
  where the 0 entry in the left-hand side appears because $\phi$ is independent of $y$.
  The last entry of \eqref{eq:vectident} gives
  \[0=\int_\Omega\phi(x)1\,dx+\int_\Omega \phi(x)[\eta_r(x,\varphi_r(x))]_{n+1}J_r(x,\varphi_r(x))\,dx.\]
  Since this is true for all $\phi\in C^\infty_c(\Omega)$, we conclude that, for almost every $x\in\Omega$, \[J_r(x,\varphi_r(x))=-\frac1{[\eta_r(x,\varphi_r(x))]_{n+1}},\]
  and \eqref{eq:jacobian} becomes (cf.~\eqref{eq:defzeta})
  \begin{equation}\label{eq:jacobian2}
  \int_{\partial\rho^{-1}(-\infty,r]}f(x,y)\eta_r(x,y)\,dH^n(x,y)=\int_{\Omega}f(x,\varphi_r(x))\
  \begin{pmatrix}
   \zeta_r(x)\\-1
  \end{pmatrix}dx.
  \end{equation}
  Applying \eqref{eq:jacobian2} to $f=X_i$ to obtain, in the $i$-th entry,
  \begin{equation*}%
  \int_{\partial\rho^{-1}(-\infty,r]}X_i(x,y)[\eta_r(x,y)]_i\,dH^n(x,y)=\begin{cases}\int_{\Omega}X_i(x,\varphi_r(x))
   [\zeta_r(x)]_i\,dx,& 1\leq i\leq n-1,\\
   -\int_{\Omega}X_n(x,\varphi_r(x))
   \,dx,&i=n,
   \end{cases}
  \end{equation*}  
  and adding these over all $i=1,\dots, n$ proves the identity in item \ref{it:weakderivative2}.

    \begin{figure}
 \includegraphics[width=\textwidth]{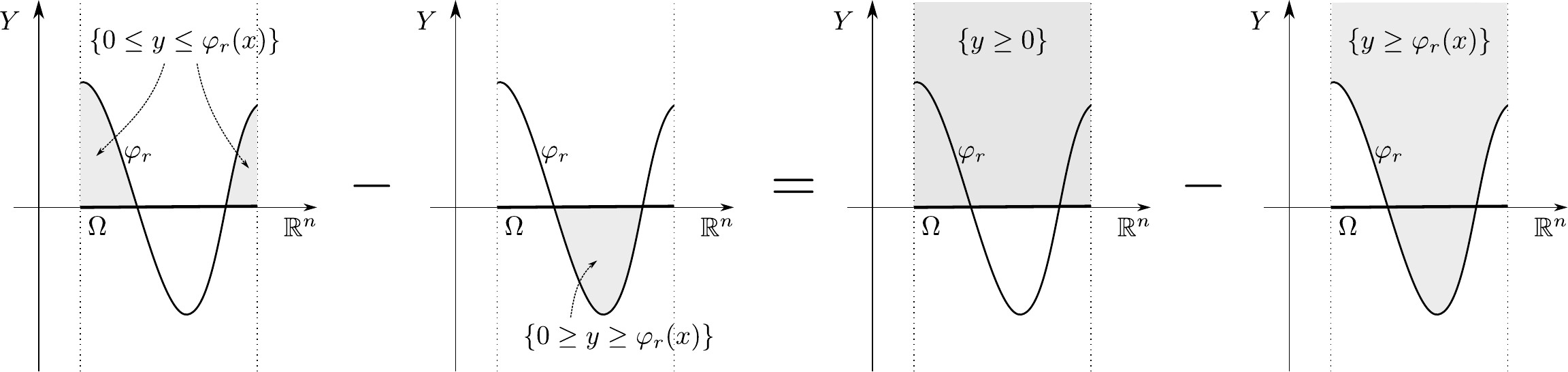}
 \centering
 \caption{Illustrating a step in the proof of Lemma \ref{lem:weakderivative}, we see that the difference of integrals on the shaded areas in the first two diagrams on the left is equal to the difference of integrals on the two  areas on the right. %
 }
  \label{fig:4}
 \end{figure}
  
  We also have, taking only the first $n$ entries in \eqref{eq:vectident},
  \[\int_\Omega\varphi_r(x)\nabla\phi(x)\,dx=\int_\Omega\phi(x)\zeta_r(x)\,dx.\]
  This is precisely the definition \eqref{eq:defweakdiff} of weak differentiability.
\end{proof}

\begin{proof}[Proof of Theorem \ref{thm:decomposition}]  
Let $\phi\colon\Omega\times Y\times Z\to\R$ be, for now, a \emph{smooth} function that is linear in $z$ and is compactly-supported in $\Omega\times Y$, vanishing in a neighborhood of $\partial \Omega\times Y$. 

By Lemma \ref{lem:boundedrange}, the function $\rho$ from Lemma \ref{lem:radon-nikodym} is bounded; its range is the bounded set ${\mathcal R\subset I=[\inf\mathcal R,\sup\mathcal R]}\subset(-\infty,0]$. The functions $\varphi_r$  in Lemma \ref{lem:main} are defined only for $r\in  {\mathcal R}$. Let $\nu$ denote the restriction of 
{the} Lebesgue measure to ${\mathcal R}$.

 Since $\phi$ is linear in $z$, for each $(x,y)\in\Omega\times Y$ the functional $z\mapsto \phi(x,y,z)$ corresponds to a vector $\tilde X(x,y)\in\R^n$ satisfying
 \[\phi(x,y,z)=\langle\tilde X(x,y),z\rangle,\quad (x,y,z)\in\Omega\times Y\times Z,\]
 and we let $X(x,y)=(\tilde X(x,y),0)\in\R^{n+1}$.
 
 Then by Lemmas \ref{lem:dividentity} and \ref{lem:main}{, and the Cavalieri principle (cf. footnote \ref{fn:cavalieri}),} we have
 \begin{align*}
  \int_{\Omega\times Y\times Z}\phi\,d\mu&=\int_{\Omega\times Y\times Z}\langle X,(z,-1)\rangle\,d\mu\\
  &={\mathcal C_\mu}(X)\\
  &=-S(\operatorname{div} X)\\
  &=-\int_{\Omega\times Y} \rho\operatorname{div} X\,dx\,dy\\
  &=\int_{I}\int_{\rho^{-1}(-\infty,r]} \operatorname{div} X\,dx\,dy\,dr\\
  &=\int_{I}\int_{\{(x,y)\in\Omega\times Y:y\geq \varphi_r(x)\}}\operatorname{div} X\,dx\,dy\,dr\\
  &=\int_{I} \int_{\Omega} \langle X(x,\varphi_r(x)),(D\varphi_r(x),-1)\rangle \,dx\,dr\\
  &=\int_{I}\int_\Omega \phi(x,\varphi_r(x),D\varphi_r(x)) %
  \,dx\,dr
 \end{align*}
 Thus the first statement of the theorem is true in the case of smooth $\phi$ linear in $z$. Defining $\mathcal Z$, $\bar\mu$, and $\mu_{xy}$ as in \eqref{eq:defZ}
 we have for all continuous functions $\phi\colon\Omega\times Y\times Z\to \R$ that are linear in $z$, 
\begin{equation}\label{eq:linearinz}
 \int_{\Omega\times Y\times Z}\phi\,d\bar\mu=\int_{\Omega\times Y\times Z}\phi\,d\mu=\int_{I}\int_\Omega\phi(x,\varphi_r(x),D\varphi_r(x))\,dx\,dr.
\end{equation}
This means that for $(\projXY)_\#\mu$-almost every $(x,y)$ we have, by Lemma \ref{lem:samederivatives2}, that, if $r$ is such that $\varphi_r(x)=y$ then
\begin{equation}\label{eq:singlepoint}
 D\varphi_r(x)=\mathcal Z(x,y).
\end{equation}

Observe that, by \eqref{eq:singlepoint}, Jensen's inequality, and \eqref{eq:finitemoments}, we have
\begin{multline*}
 \int_{{\mathcal R}}\int_\Omega\|D\varphi_r\|\,dx\,d\nu(r)
 {\leq \int_I\int_\Omega \|D\varphi_r\|dx\,dr}\\
 =\int_{\Omega\times Y\times Z}\|\mathcal Z(x,y)\|\,d\mu(x,y,z)
  \leq \int_{\Omega\times Y\times Z}\|z\|d\mu<+\infty,
\end{multline*}
whence it follows that for $\nu$-almost every $r$, we have $\varphi_r\in W^{1,1}(\Omega)$. The argument used to establish \eqref{eq:singlepoint} shows that in fact $D\varphi_r(x)$ is, for almost every $x$ and $\nu$-almost every $r$, in the convex hull of $\operatorname{supp}\mu\cap\{(x,\varphi_r(x),z):z\in Z\}$. Since $\operatorname{supp} \mu$ is compact, this implies that $\varphi_r$ is in $W^{1,\infty}(\Omega)$ for $\nu$-almost every $r$. 

For $\psi\in C_c^\infty(\Omega\times Y)$, we have, by \eqref{eq:defS}, Lemma \ref{lem:radon-nikodym}, Fubini, the Cavalieri principle (cf. footnote \eqref{fn:cavalieri}), and the fact that $y\geq \varphi_r(x)$ is equivalent to $\rho(x,y)\leq r$,
\begin{multline}\label{eq:scalarint}
    \int_{\Omega\times Y\times Z}\psi(x,y)\,d\mu(x,y,z)
    =-\int_{\Omega\times Y\times Z}\int_y^\infty \frac{\partial\psi}{\partial y}(x,y)\,d\mu(x,y,z)
    =S\left(\frac{\partial\psi}{\partial y}\right)\\
    =\int_{\Omega\times Y}\frac{\partial\psi}{\partial y}(x,y)\,\rho(x,y)\,dx\,dy
    =-\int_{\Omega\times Y}\frac{\partial\psi}{\partial y}(x,y)\int_I\chi_{\{y\geq \varphi_r(x)\}}d\nu(r)\,dx\,dy\\
    =-\int_I\int_{\Omega}\int_{\varphi_r(x)}^\infty\frac{\partial\psi}{\partial y}(x,y)\,dy\,dx\,d\nu(r)
    =\int_I\int_{\Omega}\psi(x,y)\,dx\,d\nu(r).
\end{multline}
This shows that (\ref{eq:superposition}) holds for smooth $\phi$ constant in $z$, whence adding up we get the statement for smooth $\phi$ affine in $z$. This implies that the statement holds also for continuous $\phi$  affine in $z$ by the density of $C^\infty$ functions affine in $z$ in the space of continuous functions affine in $z$ in the uniform norm on compact sets.

 The case of $\phi \in L_1(\mu)$ is proven by the following argument. Let $\mu' = \int_{{\mathcal R}} (\mathrm{id},\varphi_r,D\varphi_r)_{\#}\lambda_{\Omega}\, dr$, where $\lambda_\Omega$ is the Lebesgue measure on $\Omega$; that is, $\mu'$ is the superposition of the occupation measures generated by the functions $\varphi_r$. Then equation (\ref{eq:superposition}) reads
 \[
 \int_{\Omega\times Y\times Z} \phi\,d\mu = \int_{\Omega\times Y\times Z} \phi\,d\mu'
 \]
 for all $\phi\in L_1(\mu)$ affine in $z$. Since the result holds for all continuous $\phi$ affine in $z$ and both $\mu$ and $\mu'$ are Radon measures, it follows immediately by a classical density result that the statement holds for all $\phi \in L_1(\mu)$ independent of $z$. This implies that the $(x,y)$ marginals $\mu_{\Omega\times Y}$ and $\mu'_{\Omega\times Y}$ coincide. 
 It remains to prove the statement with $\phi\in L_1(\mu)$ linear in $z$; it suffices to consider $\phi$ of the form $\phi(x,y,z)=f(x,y)z_k$ for $f\in L^1(\mu_{\Omega\times Y})$ and $1\leq k\leq n$, because a general $\phi$ will be a linear combination of these. We already have, for continuous $f$, the identity
 \[\int_{\Omega\times Y\times Z}f(x,y)z_k\,d\mu(x,y,z)=\int_{\Omega\times Y\times Z}f(x,y)z_k\,d\mu'(x,y,z).\]
 By the same classical density result cited above applied to the signed Radon measures $z_k d\mu$ and $z_k d\mu'$, this identity holds for $f\in L^1(\mu_{\Omega\times Y})$ too. This shows that the result is true for all $\phi\in L_1(\mu)$.

The last statement of the theorem follows directly from Lemma \ref{lem:main}.
 \end{proof}
 
 \subsection{Connection with the original Hardt-Pitts decomposition}
 \label{sec:hardtpitts}
The context in which superpositions of the type described in Theorem \ref{thm:decomposition}  were first developed is that of Geometric Measure Theory, in which the main objects of interest are \emph{currents}, which are the continuous functionals on the space of smooth differential forms on an open set or a manifold. Just like distributions (continuous functionals on $C^\infty_c(U)$) can be of order higher than 1, involving integrals of derivatives of the test function, currents can also involve derivatives of the differential forms they are fed. This is why it is interesting to distinguish \emph{normal currents}, which roughly correspond to currents that can be expressed as integrals over a finite measure, evaluating the differential form on a set of vector fields, and satisfying some additional integrability conditions (see for example \cite{morgan}). Thus for example, the version of the Hardt-Pitts decomposition described in \cite{zworski1988decomposition} shows that a normal current of dimension $n$ in $\R^{n+1}$ and associated to a finite measure whose density is a positive $C^\infty$ function, and smooth vector fields satisfying an integrability condition, can be expressed as a superposition of so-called \emph{rectifiable currents} of dimension $n$. These are currents that can be written as a sum of countably many integrals over Lipschitz hypersurfaces. Our result does not require the smoothness assumptions of \cite{zworski1988decomposition}.

Let us explain how to associate a normal current $T_\mu$ to the measure $\mu$ that Theorem \ref{thm:decomposition} decomposes: for a differential form $\omega$ of order $n$ on $\Omega\times Y$, we let
\[T_\mu(\omega)=\int_{\Omega\times Y\times Z}
\omega_{(x,y)}(\tiny\begin{pmatrix}
 1\\0\\\vdots\\0\\z_1
\end{pmatrix},
\begin{pmatrix}
 0\\1\\\vdots\\0\\z_2
\end{pmatrix},
\dots,
\begin{pmatrix}
 0\\0\\\vdots\\1\\z_n
\end{pmatrix})\,d\mu(x,y,z).
\]
Similarly, to each of the sheets $\varphi_r$ %
we can associate a rectifiable current $R_r$ on $\Omega\times Y$ given by
\[R_r(\omega)=\int_{\Omega}
\omega_{(x,\varphi_r(x))}(\tiny\begin{pmatrix}
 1\\0\\\vdots\\0\\\frac{\partial\varphi_r}{\partial x_1}(x)
\end{pmatrix},
\begin{pmatrix}
 0\\1\\\vdots\\0\\\frac{\partial\varphi_r}{\partial x_2}(x)
\end{pmatrix},
\dots,
\begin{pmatrix}
 0\\0\\\vdots\\1\\\frac{\partial\varphi_r}{\partial x_n}(x)
\end{pmatrix})\,dx.
\]
Then the main conclusion of Theorem \ref{thm:decomposition} can be written as
\[T_\mu=\int_\R R_rd\nu(r),\]
an expression that roughly corresponds to \cite[eqs. (2), (8)]{zworski1988decomposition}.

\subsection{{Proof of Theorem \ref{thm:consolidated}}}
\label{sec:proof2}

In order to prove Theorem \ref{thm:consolidated}, we need a lemma.

\begin{lemma}\label{lem:jensenlemma}
\begin{enumerate}[label=\roman*.,ref=(\roman*)]
    \item \label{it:insideomega}Let $\mu$ be as in the statement of Theorem \ref{thm:consolidated}. Let $\nu$ and $(\varphi_r)$ be as in the conclusion of Theorem \ref{thm:decomposition}. Assume that $L\colon\Omega\times Y\times Z\to\R$ is measurable, and convex in $z$. Then
    \[\int_{\R}\int_\Omega L(x,\varphi_r(x),D\varphi_r(x))\,d\nu\leq \int_{\Omega\times Y\times Z}L\,d\mu.\]
    \item \label{it:boundary} Assume, additionally to the previous item, that $\mu_\partial$ is as in the statement of Theorem \ref{thm:consolidated}. Then the restriction of $\varphi_r$ to $\partial\Omega$ is a well-defined Lipschitz function, and we have, for all measurable functions $\phi\colon\partial\Omega\times Y\to\R$,
    \[\int_{\partial\Omega\times Y}\phi(x,y) d\mu_\partial(x,y)=\int_{\R}\int_{\partial\Omega}\phi(x,\varphi_r(x))d\sigma(x)\,d\nu(r),\]
    where $\sigma$ denotes Hausdorff measure on the boundary $\partial \Omega$.
     In other words, the decomposition of $\mu$ implies a decomposition of $\mu_\partial$. 
\end{enumerate}
\end{lemma}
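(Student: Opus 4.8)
The plan is to deduce both parts by pushing everything through the superposition $\mu':=\int_{\R}(\mathrm{id},\varphi_r,D\varphi_r)_{\#}\lambda_\Omega\,d\nu(r)$ furnished by Theorem~\ref{thm:decomposition}, which has the same $(x,y)$-marginal as $\mu$, namely $\mu_{\Omega\times Y}$, and which satisfies $D\varphi_r(x)=\mathcal Z(x,\varphi_r(x))$ for $(\lambda_\Omega\otimes\nu)$-almost every $(x,r)$ by~\eqref{eq:singlepoint}, where $\mathcal Z$ is the centroid of Definition~\ref{def:centroid}. For item~\ref{it:insideomega}, I would first disintegrate $\mu=\int_{\Omega\times Y}\mu_{xy}\,d\mu_{\Omega\times Y}(x,y)$ into probability measures on $Z$ as in Definition~\ref{def:centroid}; since each $\mu_{xy}$ has finite first moment (by~\eqref{eq:finitemoment}) and $z\mapsto L(x,y,z)$ is convex, Jensen's inequality gives $\int_Z L(x,y,z)\,d\mu_{xy}(z)\ge L(x,y,\mathcal Z(x,y))$ for $\mu_{\Omega\times Y}$-a.e.\ $(x,y)$. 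Integrating against $\mu_{\Omega\times Y}$ produces $\int L\,d\mu\ge\int_{\Omega\times Y}L(x,y,\mathcal Z(x,y))\,d\mu_{\Omega\times Y}(x,y)$ (the integrand is measurable because $L$ and $\mathcal Z$ are, and there is nothing to prove if $\int L\,d\mu=+\infty$); then, replacing $\mu_{\Omega\times Y}$ by the equal measure $\mu'_{\Omega\times Y}$, unfolding the definition of $\mu'$, and substituting $\mathcal Z(x,\varphi_r(x))=D\varphi_r(x)$, the right-hand side becomes $\int_{\R}\int_\Omega L(x,\varphi_r(x),D\varphi_r(x))\,dx\,d\nu(r)$, which is the claim.

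For item~\ref{it:boundary}, I would first note that for $\nu$-a.e.\ $r$ one has $\varphi_r\in W^{1,\infty}(\Omega)$ (shown in the proof of Theorem~\ref{thm:decomposition}), so, $\partial\Omega$ being piecewise $C^1$ and hence Lipschitz, each such $\varphi_r$ extends to a Lipschitz function on $\overline\Omega$ and therefore has a well-defined Lipschitz trace on $\partial\Omega$. Discarding the exceptional $\nu$-null set of $r$, fix $\phi\in C^\infty(\Omega\times Y)$ as in~\eqref{eq:boundarymeasure} and $\ell\in\{1,\dots,n\}$; the divergence theorem on $\Omega$ applied to the Lipschitz function $x\mapsto\phi(x,\varphi_r(x))$, together with the chain rule, gives
\[\int_\Omega\Big[\tfrac{\partial\phi}{\partial x_\ell}(x,\varphi_r(x))+\tfrac{\partial\phi}{\partial y}(x,\varphi_r(x))\,\tfrac{\partial\varphi_r}{\partial x_\ell}(x)\Big]\,dx=\int_{\partial\Omega}\phi(x,\varphi_r(x))\,n_\ell(x)\,d\sigma(x),\]
with $\mathbf n=(n_1,\dots,n_n)$ the outer unit normal. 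Integrating this in $r$ against $\nu$ and using~\eqref{eq:superposition} for the affine-in-$z$ function $(x,y,z)\mapsto\tfrac{\partial\phi}{\partial x_\ell}(x,y)+\tfrac{\partial\phi}{\partial y}(x,y)z_\ell$, the left-hand side equals $\int_{\Omega\times Y\times Z}\big(\tfrac{\partial\phi}{\partial x_\ell}+\tfrac{\partial\phi}{\partial y}z_\ell\big)\,d\mu$, which by~\eqref{eq:boundarymeasure} equals $\int_{\partial\Omega\times Y}\phi\,n_\ell\,d\mu_\partial$. Hence
\[\int_{\partial\Omega\times Y}\phi\,n_\ell\,d\mu_\partial=\int_{\R}\int_{\partial\Omega}\phi(x,\varphi_r(x))\,n_\ell(x)\,d\sigma(x)\,d\nu(r),\qquad\ell=1,\dots,n.\]
To remove the factor $n_\ell$, I would approximate each bounded Borel function $n_\ell$ on $\partial\Omega$, in $L^1$ of the relevant finite measures, by restrictions of continuous functions on $\overline\Omega$, substitute, pass to the limit, sum over $\ell$, and use $\sum_\ell n_\ell^2=1$ $\sigma$-a.e.; this identifies $\mu_\partial$ with $\lambda:=\int_{\R}(\mathrm{id},\varphi_r)_{\#}\sigma\,d\nu(r)$ away from the ($\sigma$-null) singular set $E$ of the piecewise-$C^1$ boundary, and after checking that $\mu_\partial$ gives no mass to $E\times Y$ one concludes $\mu_\partial=\lambda$. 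Since $\lambda$ is precisely the asserted decomposition of $\mu_\partial$, the stated identity for all measurable $\phi$ then follows from the equality of these two finite Radon measures and dominated convergence on the right-hand side.

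The hard part will be the final ``division by $\mathbf n$'' in item~\ref{it:boundary}: the map $\phi\mapsto\phi\,\mathbf n$ is far from surjective onto $\R^n$-valued test functions, since individual components $n_\ell$ vanish on large sets and $\mathbf n$ is only defined $\sigma$-a.e.\ on a merely piecewise-$C^1$ boundary, so one cannot simply invert it; this forces the approximation/square-sum device above, together with a separate verification that $\mu_\partial$ does not charge the singular part of $\partial\Omega$. The remaining ingredients---Jensen's inequality for item~\ref{it:insideomega}, and the Lipschitz divergence theorem with a routine density argument for item~\ref{it:boundary}---are standard.
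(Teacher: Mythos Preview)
Your proposal is correct and follows essentially the same route as the paper: Jensen's inequality against the disintegration $(\mu_{xy})$ combined with \eqref{eq:singlepoint} for part~\ref{it:insideomega}, and for part~\ref{it:boundary} the divergence theorem applied to each $\varphi_r$, then \eqref{eq:boundarymeasure} together with \eqref{eq:superposition}, followed by the same substitution $\phi\mapsto\phi\,n_\ell$ and summation over $\ell$ using $\sum_\ell n_\ell^2=1$ to cancel the normal. The paper carries out this last step by writing $f=\phi\,\mathbf n_i$ directly, without discussing approximation of $\mathbf n_i$ or the singular set of $\partial\Omega$, so your treatment is if anything the more careful of the two.
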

\begin{proof}
Let us prove item \ref{it:insideomega}. Let $\mathcal Z$, $\mu_{\Omega\times Y}$, $\mu_{xy}$, and $\bar \mu$ be as in Definition \ref{def:centroid}.
It follows from Jensen's inequality that
\begin{equation}\label{eq:jensenarg}
    L(x,y,\mathcal Z(x,y))\leq \int_{Z} L(x,y,z)\,d\mu_{xy}(z)
\end{equation}
and hence also
\begin{multline}\label{eq:Jensen_mu}
\int_{\Omega\times Y\times Z} L\,d\bar\mu=\int_{\Omega\times Y} L(x,y,\mathcal Z(x,y))\,d\mu_{\Omega\times Y}(x,y)\\\leq \int_{\Omega\times Y}\int_Z L(x,y,z)d\mu_{xy}(z)d\mu_{\Omega\times Y}(x,y)=\int_{\Omega\times Y\times Z} L\,d\mu.
\end{multline}
Since the integrals of the functions $\phi$ in the statement of Theorem \ref{thm:decomposition} with respect to $\mu$ and $\bar\mu$ coincide,
the decomposition given by the theorem is the same for either of these measures; let $\nu$ be the corresponding measure, and $(\varphi_r)$ be the corresponding family of  functions. 
Thus by \eqref{eq:Jensen_mu}, the definition of $\bar\mu$, the fact that the $L(x,y,\mathcal Z(x,y))$ does not depend on $z$, \eqref{eq:scalarint}, and \eqref{eq:singlepoint},
\begin{align*}
 \int L\,d\mu&\geq \int L\,d\bar\mu\\
 &=\int L(x,y,\mathcal Z(x,y))\,d\mu_{\Omega\times Y}(x,y)\\
 &=\int L(x,y,\mathcal Z(x,y))\,d\mu(x,y,z)\\
 &=\int L(x,\varphi_r(x),\mathcal Z(x,\varphi_r(x)))\,dx\,d\nu(r)\\
 &=
 \int_\R\int_\Omega L(x,\varphi_r(x),D\varphi_r(x))\,dx\,d\nu(r)
\end{align*}
This proves item \ref{it:insideomega}.

 To prove item \ref{it:boundary}, note that the set $\Omega$ has a boundary measure  $\sigma$ supported on $\partial \Omega$ such that, if $X\in C^1(\overline\Omega;\R^n)$, then the Gauss theorem holds, that is
\begin{equation}\label{eq:gauss}
 \int_{\partial \Omega}\langle X(x),\mathbf n(x)\rangle\,d\sigma(x)=\int_\Omega\operatorname{div} X(x)\,dx,\quad \operatorname{div} X=\sum_i\frac{\partial X_i}{\partial x_i},
\end{equation}
where $\mathbf n\colon\partial\Omega\to\R^n$ is the exterior unit vector normal to $\Omega$.
Equivalently, for all $u\in C^1(\overline\Omega;\R)$ and all $\phi\in C^\infty(\overline\Omega\times Y;\R)$, taking $X(x)=e_j\phi(x,u(x))$ for each $j=1,\dots n$ in \eqref{eq:gauss}, we get,
\begin{equation}\label{eq:gauss2}
 \int_{\partial \Omega}
 \phi(x,u(x))\mathbf n(x)\,d\sigma(x)=\int_\Omega\frac{\partial\phi}{\partial x}(x,u(x))
 +\frac{\partial\phi}{\partial y}(x,u(x))Du(x)\,dx.
\end{equation}
By the density of smooth functions among the weakly-differentiable ones, and continuity of the integral, \eqref{eq:gauss2} holds also for bounded weakly differentiable functions $u$.

Remark that, since $\mu$ is compactly supported, for $\nu$-almost every $r$ the function $\varphi_r$ is bounded, as is its weak derivative $D\varphi_r$. Thus $\varphi_r\in W^{1,\infty}(\overline\Omega)$, and $\varphi_r$ is Lipschitz, as is its restriction to the boundary $\partial\Omega$. %

 We have, from  \eqref{eq:boundarymeasure}, \eqref{eq:linearinz}, and \eqref{eq:gauss2} with $u=\varphi_r$, for $f\in C^\infty(\overline \Omega\times Y;\R)$, 
 \begin{align*}
     \int_{\partial\Omega\times Y}&f(x,y)\mathbf n(x)\,d\mu_\partial(x,y)\\
     &=\int_{\Omega\times Y\times Z}\frac{\partial f}{\partial x}(x,y)+\frac{\partial f}{\partial y}(x,y)z\,d\mu(x,y,z)\\
     &=\int_\R\int_{\Omega}\frac{\partial f}{\partial x}(x,\varphi_r(x))+\frac{\partial f}{\partial y}(x,\varphi_r(x))D\varphi_r(x)\,dx\,d\nu(r)\\
     &=\int_\R\int_{\partial \Omega}f(x,\varphi_r(x))\mathbf n(x)\,d\sigma(x)\,d\nu(r).
 \end{align*}
 Let $\phi\in C^\infty(\overline\Omega\times Y;\R)$.
 Letting $f=\phi\mathbf n_i$, where $\mathbf n=(\mathbf n_1,\dots,\mathbf n_n)$, we get
 \[\int_{\partial\Omega\times Y\times Z}\phi(x,y)\mathbf n_i(x)\mathbf n(x)\,d\mu_\partial(x,y)=
 \int_\R\int_{\partial \Omega}\phi(x,\varphi_r(x))\mathbf n_i(x)\mathbf n(x)\,d\sigma(x)\,d\nu(r).\]
 Summing over the $i$-th entries, we get integrals of $\phi(x,y)\langle\mathbf n(x),\mathbf n(x)\rangle=\phi(x,y)$, thereby proving item \ref{it:boundary}.
\end{proof}

\begin{proof}[Proof of Theorem \ref{thm:consolidated}]
Define $\mathcal C=F^{-1}(0)\cap G^{-1}((-\infty,0])$. Clearly $(x,y,z) \in \mathcal C$ if and only if $F(x,y,z) = 0$ and $G(x,y,z) \le 0$. %
Assumption \ref{U:synthetic} means that $\mathcal C\cap ((x,y)\times Z)$ is convex for each $(x,y)\in \Omega\times Y$. Assumption \ref{U:closedconditions} means that $\mathcal C$ and $\mathcal C_\partial$ are closed sets.

Define also $\mathcal C_\partial=F_\partial^{-1}(0)\cap G_\partial^{-1}((-\infty,0])$ and observe that $(x,y)\in\mathcal C_\partial$ if, and only if, $F_\partial(x,y) = 0$ and $G_\partial(x,y) \le 0$. 

The total mass $\nu(\R)$ of the measure $\nu$ is  1 because
\[\nu(\R)|\Omega|=\int_\R \int_\Omega dx\,d\nu(r)%
=\int_{\Omega\times Y\times Z}d\mu=\mu(\Omega\times Y\times Z),\]
and we assumed $\mu(\Omega\times Y\times Z)=|\Omega|$.
Hence we also have, using Lemma \ref{lem:jensenlemma},
\begin{multline}\label{eq:averagearg}
 \inf_{r\in\operatorname{supp}\nu}\int_\Omega L(x,\varphi_r(x),D\varphi_r(x))\,dx
 +\int_{\partial\Omega} L_\partial(x,\varphi_r(x))\,d\sigma(x)\\
 \leq\frac1{\nu({\R})}
 \int_\R \int_\Omega L(x,\varphi_r(x),D\varphi_r(x))\,dx\,d\nu(r)+\frac1{\nu({\R})}
 \int_\R \int_{\partial\Omega} L_\partial(x,\varphi_r(x))\,d\sigma(x)\,d\nu(r)\\
 \leq\frac1{\nu({\R})} \int L\,d\mu+\frac1{\nu({\R})}\int L_\partial\,d\mu_\partial=\int L\,d\mu+\int L_\partial\,d\mu_\partial.
\end{multline}
This means that the set $I_1$ of values of $r$ such that $\varphi_r$ satisfies \eqref{eq:barphiL1} has positive measure $\nu(I_1)>0$.

For $\nu$-almost every $r$ and almost every $x\in\Omega$, the point $(x,\varphi_r(x))$ is in the support of $(\projXY)_\#\mu$, for if we take $\phi\in C^0(\Omega\times Y)$ then, by \eqref{eq:scalarint},
\[\int_{\Omega\times Y}\phi\,d(\projXY)_\#\mu=\int_{\Omega\times Y}\phi\,d\mu=\int_\R\int_\Omega\phi(x,\varphi_r(x))\,dx\,d\nu(r).\]
From the argument leading to \eqref{eq:singlepoint}, it follows that for $\nu$-almost every $r$ and almost every $x\in\Omega$ we have $(x,\varphi_r(x),D\varphi_r(x))=(x,\varphi_r(x),\mathcal Z(x,\varphi_r(x)))$. This point is in $\mathcal C$ because $\mathcal Z(x,\varphi_r(x))$ is in the convex hull of $\operatorname{supp}\mu\cap ((x,\varphi_r(x))\times Z)$, and the latter is contained in the convex set $\mathcal C\cap ((x,\varphi_r(x))\times Z)$. Let $I_2$ be the set of values of $r$ such that $(x,\varphi_r(x),D\varphi_r(x))\in\mathcal C$ for almost every $x\in\Omega$; we have shown that $\nu(I_2)=1$.

Also, \eqref{eq:Gcond} and the  decomposition of $\mu_\partial $ from Lemma \ref{lem:jensenlemma}\ref{it:boundary} imply that the set $I_3$ of values of $r$ such that, for $\sigma$-almost every $x\in \partial\Omega$ we have
 $ (x,\varphi_r(x))\in\mathcal C_\partial$,
satisfies $\nu(I_3)=1$.

We thus have that $\nu(I_1\cap I_2\cap I_3)>0$. Pick $r_0\in I_1\cap I_2\cap I_3$, and set $\bar\varphi=\varphi_{r_0}$. Then $\bar\varphi$ satisfies \eqref{eq:barphiL1}--\eqref{eq:barphiG}.

 To prove item \ref{it:gi}, note that $C^\infty(\Omega)\cap W^{1,\infty}(\overline\Omega)$ is dense in $W^{1,\infty}(\overline\Omega)$, so we may take the functions $g_i$ to be equal to $\bar\varphi$ on the boundary $\partial\Omega$ and smooth in $\Omega$;
 for example, we can take a mollifier $\psi\colon \R^n\to \R_{\geq0}$, $\psi\in C^\infty(\R^n)$ supported in the unit ball and verifying $\int_{\R^n}\psi=1$, and take $h\in C^\infty(\Omega)\cap W^{1,\infty}(\overline\Omega)$ such that $0<h(x)<\operatorname{dist}(x,\partial\Omega)/2$, and define $h(x)=0$ for $x\in\partial\Omega$. Then
 \[g_i(x)=\begin{cases}
 \frac{i^n}{h(x)^n}\int_{\R^n}\psi\left(i\frac{x-y}{h(x)}\right)\,\bar\varphi(y)\,dy,&x\in\Omega,\\
 \bar\varphi(x),&x\in\partial \Omega.
 \end{cases}
 \]
 This makes $g_i$ into a convolution of $\bar\varphi$ with a smooth kernel that approximates the Dirac delta as $i\to+\infty$ that is supported inside of $\Omega$ ($h$ guarantees this). From this definition and  properties \eqref{eq:barphiL1}--\eqref{eq:barphiG} of $\bar\varphi$, together with the continuity of $F$ and $G$, it follows that \eqref{eq:Lineq}-- \eqref{eq:limG} also hold. We may differentiate $\psi$ infinitely many times inside the integral sign, by the dominated convergence theorem, so $g_i\in C^\infty(\Omega)$.  
 
 Let us prove that $g_i$ is Lipschitz on $\overline \Omega$. Since $\bar\varphi\in W^{1,\infty}(\overline\Omega)$, it is Lipschitz, and we will denote its Lipschitz constant by $\ell$. For $x_1,x_2\in\overline\Omega$, we have three cases. First, if $x_1,x_2$ are both in $\partial\Omega$, then 
 \[|g_i(x_1)-g_i(x_2)|=|\bar\varphi(x_1)-\bar\varphi(x_2)|\leq \ell|x_1-x_2|.\]
 Next, if $x_1,x_2\in \Omega$ and $H$ is the Lipschitz constant of $h$, then
 \begin{align*}
     &|g_i(x_1)-g_i(x_2)|\\
     &=\left|\frac{i^n}{h(x_1)^n}\int_{\R^n}\psi\left(i\frac{x_1-y}{h(x_1)}\right)\,\bar\varphi(y)\,dy-\frac{i^n}{h(x_2)^n}\int_{\R^n}\psi\left(i\frac{x_1-y}{h(x_2)}\right)\,\bar\varphi(y)\,dy\right|\\
     &=\left|\frac{i^n}{h(x_1)^n}\int_{\R^n}\psi\left(i\frac{y}{h(x_1)}\right)\,\bar\varphi(x_1-y)\,dy-\frac{i^n}{h(x_2)^n}\int_{\R^n}\psi\left(i\frac{y}{h(x_2)}\right)\,\bar\varphi(x_2-y)\,dy\right|\\
     &\leq 
     \left|\frac{i^n}{h(x_1)^n}\int_{\R^n}\psi\left(i\frac{y}{h(x_1)}\right)\,\bar\varphi(x_1-y)\,dy-\frac{i^n}{h(x_1)^n}\int_{\R^n}\psi\left(i\frac{y}{h(x_1)}\right)\,\bar\varphi(x_2-y)\,dy\right|\\
     &\quad+\left|\frac{i^n}{h(x_1)^n}\int_{\R^n}\psi\left(i\frac{y}{h(x_1)}\right)\,\bar\varphi(x_2-y)\,dy-\frac{i^n}{h(x_2)^n}\int_{\R^n}\psi\left(i\frac{y}{h(x_2)}\right)\,\bar\varphi(x_2-y)\,dy\right|\\
     &\leq \ell\|x_1-x_2\|\frac{i^n}{h(x_1)^n}\int_{\R^n}\psi\left(i\frac{y}{h(x_1)}\right)dy \\
     &\quad +\left|\frac{i^n}{h(x_1)^n}\int_{\R^n}\psi\left(i\frac{y}{h(x_1)}\right)\,\bar\varphi(x_2-y)\,dy-\frac{i^n}{h(x_1)^n}\int_{\R^n}\psi\left(i\frac{u}{h(x_1)}\right)\,\bar\varphi\left(x_2-u\frac{h(x_2)}{h(x_1)}\right)\,du\right|\\
     &\leq \ell\|x_1-x_2\|+\ell\sup_{\|y\|\leq h(x_1)/i}\|(x_2-y)-(x_2-y\tfrac{h(x_2)}{h(x_1)})\|\frac{i^n}{h(x_1)^n}\int_{\R^n}\psi\left(i\frac{y}{h(x_1)}\right)dy  \\
     &\leq \ell\|x_1-x_2\|+\ell|h(x_1)-h(x_2)|/i\\
     &\leq (\ell+\ell H/i)\|x_1-x_2\|
 \end{align*}
 where we used the change of variables $u=yh(x_1)/h(x_2)$.
 Similarly, if, say, $x_1\in\partial\Omega$ and $x_2\in\Omega$, we have (and this is our last case),
 \begin{align*}
  &|g_i(x_1)-g_i(x_2)|\\
     &=\left|\bar\varphi(x_1)-\frac{i^n}{h(x_2)^n}\int_{\R^n}\psi\left(i\frac{x_1-y}{h(x_2)}\right)\,\bar\varphi(y)\,dy\right|\\
     &=\left|\frac{i^n}{h(x_2)^n}\int_{\R^n}\psi\left(i\frac{y}{h(x_2)}\right)\,\bar\varphi(x_1)\,dy-\frac{i^n}{h(x_2)^n}\int_{\R^n}\psi\left(i\frac{y}{h(x_2)}\right)\,\bar\varphi(x_2-y)\,dy\right|\\
     &\leq \ell(\|x_1-x_2\|+h(x_2)/i)\frac{i^n}{h(x_2)^n}\int_{\R^n}\psi\left(i\frac y{h(x_2)}\right)dy\\
     &\leq \ell(\|x_1-x_2\|+|h(x_2)-h(x_1)|/i)\\
     &\leq(\ell+\ell H/i)\|x_1-x_2\|
 \end{align*}
 since $h(x_1)=0$ in this case.
 Thus indeed $g_i\in W^{1,\infty}(\overline\Omega)$.
 This concludes the proof of item \ref{it:gi}. %
 \end{proof}

\section{Positive gap in codimensions greater than one} 
\label{sec:positive gap}
In this section we construct an explicit example of a Lagrangian $L$ that exhibits a positive gap between the classical and relaxed solution in codimension two (i.e., $ m =\mathrm{dim}(Y) = 2$). The Lagrangian constructed is \emph{strictly convex} in $z$ and of class $C^{1,1}_{\mathrm{loc}}$. The construction extends to codimensions greater than two and can be modified to provide a higher degree of differentiability of $L$.

Let $\Omega=B(0,1)$ be the unit ball in $\R^2$, $Y=\R^2$, $Z=\R^{2\times 2}$. Denote by $W^{1,2}(\Omega;\R^2)$ the Sobolev space weakly differentiable functions on $\Omega$, with values in $\R^2$, and whose derivative is in $L^2(\Omega;Z)$. Let $\mathcal M$ denote the set of pairs $(\mu,\mu_\partial)$ of relaxed occupation measures and their boundary measures, as in  Definition \ref{def:M}.%

We say a function is of class $C^{1,1}_{\mathrm{loc}}$ if it is continuously differentiable and its derivative is Lipschitz continuous on each compact set.

\begin{theorem}\label{thm:gap}
 There is a function $L\colon\Omega\times Y\times Z\to\R$ of class $C^{1,1}_{\mathrm{loc}}$, strictly convex in $z$, and such that 
 \begin{equation}\label{eq:gap}
 \inf_{h\in W^{1,2}(\Omega)}\int_{\Omega}L(x,h(x),Dh(x))\,dx>\min_{(\mu,\mu_\partial)\in\mathcal M} \int_{\Omega\times Y\times Z} L\,d\mu.
 \end{equation}
\end{theorem}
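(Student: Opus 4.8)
The idea is to arrange that the relaxed minimum is attained at (essentially) the occupation measure of a connected double cover of $\Omega=B(0,1)$ shaped like the Riemann surface of the complex square root, while no single-valued weakly differentiable function can follow that surface. Identifying $\R^2$ with $\mathbb{C}$, I would fix a map $g\colon\overline\Omega\to\mathbb{C}$ of the form $g(x)=c(|x|)^2\,x/|x|$, with $c\colon[0,1]\to[0,\infty)$ smooth, vanishing to high order on $[0,\tfrac12]$ and positive on $(\tfrac12,1]$, and set $\Sigma=\{(x,y)\in\overline\Omega\times\mathbb{C}:y^2=g(x)\}$. Over $B(0,\tfrac12)$ this is the flat sheet $\{y=0\}$; over the annulus $\tfrac12<|x|<1$ it is the graph of the two-valued map $y=\pm\sqrt{g(x)}=\pm c(|x|)e^{i\arg x/2}$, whose monodromy around the origin rules out any continuous, hence any $W^{1,2}$, single-valued branch. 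The fast vanishing of $c$ near $|x|=\tfrac12$ keeps the tangent-slope field $z^*$ of $\Sigma$ bounded, so the occupation measure $\mu_\Sigma$ carried by $\Sigma$ is compactly supported.

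Next I would take $L$ to be a uniformly convex quadratic form in $z$ with bounded $C^{1,1}_{\mathrm{loc}}$ coefficients whose minimizing slope at each $(x,y)$ follows $z^*$ away from the pinch locus $\{|x|=\tfrac12\}$, plus a nonnegative $C^{1,1}_{\mathrm{loc}}$ penalty $W$ that is cut out exactly by $\pi_{\Omega\times Y}(\Sigma)$ -- for instance a mollification of $|y^2-g(x)|^2$. Then $L$ is $C^{1,1}_{\mathrm{loc}}$ and strictly convex in $z$. The pair consisting of $\mu_\Sigma$ (the pushforward of Lebesgue measure on $\Omega$ along the two branches of $\Sigma$, with weight $1$ over the central disk and $\tfrac12$ per sheet over the annulus) and the boundary measure it induces over $\partial\Omega$ lies in $\mathcal M$: the mass condition \eqref{eq:measureomega} is immediate, \eqref{eq:finitemoment} holds because $z^*$ is bounded, and \eqref{eq:boundarymeasure} holds because $\Sigma$ is a cycle relative to $\partial\Omega$, by the branch-by-branch fundamental theorem of calculus together with the cancellation of the two inner sheet-boundaries along $\{|x|=\tfrac12\}$, where $c=0$. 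Since $W$ vanishes on $\Sigma$ and the quadratic part of $L$ vanishes there except near the pinch, $\int L\,d\mu_\Sigma$ can be made smaller than any prescribed threshold; this gives the upper bound on the right-hand side of \eqref{eq:gap}.

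For the lower bound on $M_{\mathrm c}$ I would fix $h\in W^{1,2}(\Omega;\R^2)$ and argue on circles. For a.e.\ $r\in(\tfrac12,1)$ the restriction $H:=h|_{\{|x|=r\}}$ is $W^{1,2}$, hence continuous and periodic, on the circle; the two $y$-values of $\Sigma$ over that circle are the antipodal pair $\pm c(r)e^{i\theta/2}$, interchanged by $\theta\mapsto\theta+2\pi$, so $H$ cannot stay close to $\Sigma$ all the way around. Hence $H$ either leaves $\pi_{\Omega\times Y}(\Sigma)$ on a set of definite length, incurring a definite amount of $\int W$, or sweeps from one sheet to the other, in which case the Poincar\'e--Wirtinger inequality on the circle turns the forced oscillation of $\theta\mapsto\operatorname{Im}H(\theta)^2$ (which shadows $c(r)^2\sin\theta$ when $H$ is near $\Sigma$) into a lower bound for $\int_{\{|x|=r\}}|\partial_\theta H|^2$, and therefore for the convex part of $L$. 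In either case $\int_{\{|x|=r\}}L(x,h,Dh)\,d\mathcal{H}^1\ge\psi(r)$ for a continuous $\psi$, depending only on the geometry, that is strictly positive on $(\tfrac12,1)$; integrating gives $\int_\Omega L(x,h,Dh)\,dx\ge\int_{1/2}^1\psi(r)\,r\,dr>0$, uniformly in $h$. Choosing $L$ (through the profile $c$ and the placement of the pinch) so that $\int L\,d\mu_\Sigma$ lies below this number yields \eqref{eq:gap}; taking $c$ to vanish to higher order and mollifying $W$ more raises the regularity from $C^{1,1}_{\mathrm{loc}}$ to $C^k$.

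The main obstacle is making all requirements on $L$ hold at once. It must be $C^{1,1}_{\mathrm{loc}}$ and strictly convex in $z$; its near-zero set must be the $1$-jet of a surface that genuinely carries monodromy yet has bounded slope -- a tension, since a classical branch point forces vertical tangents, which is why the pinch is spread onto the circle $\{|x|=\tfrac12\}$ and damped by the high-order vanishing of $c$; and the resulting gap must be strictly positive, i.e.\ the Poincar\'e--Wirtinger bound on competitors must beat the slack in $\int L\,d\mu_\Sigma$ caused by the impossibility of a $C^{1,1}$ vertex field coinciding with the discontinuous $z^*$ near the pinch. Verifying $\mu_\Sigma\in\mathcal M$ and quantifying the circle estimate precisely enough to win this comparison is the delicate part; everything else is routine.
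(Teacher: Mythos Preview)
Your global picture matches the paper's: build a double-cover surface with square-root monodromy over the disk, let the relaxed minimizer be its occupation measure, and use the topological obstruction plus Poincar\'e--Wirtinger to bound every single-valued competitor from below. The difference is in where you put the branch locus and what that costs you.

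The paper keeps the branch point at the origin and replaces $\sqrt{r}$ by $r^3$ in $y=r^3(\cos\tfrac\theta2,\sin\tfrac\theta2)$; this makes the sheet derivatives $O(r^2)$ and hence bounded, so there is no need to spread the pinch onto a circle. With a smooth interpolation $\psi(x,y)$ between the two sheets, the paper's $L(x,y,z)=\|y-U(x,y)\|^2+\|z-V(x,y)\|^2+g(x,y)$ is $C^{1,1}_{\mathrm{loc}}$ and vanishes \emph{exactly} on the support of $\mu$, so the relaxed minimum is $0$. This removes the quantitative race you set up: any uniform positive lower bound for the classical problem already gives the gap. By moving the pinch to $\{|x|=\tfrac12\}$ you create the slack in $\int L\,d\mu_\Sigma$ near that circle and then have to win a comparison you yourself acknowledge is ``the delicate part''; the paper's construction simply does not have this comparison.

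For the lower bound the paper works on the two-dimensional corona $\Gamma=\{\tfrac12\le|x|\le1\}$ rather than circle by circle. It reparameterizes the sheets with the cut at a variable angle $\alpha$, uses the intermediate value theorem to choose $\alpha_0$ so that the sets where $h$ is $E$-close to each sheet have equal area, and then splits into two cases: if both sets are small, the $\|y-U\|^2$ term alone gives a uniform bound; if both are large, a truncated distance $\bar h_0=\min(\|h-\bar u_{\alpha_0}\|,\tfrac1{10})$ is in $W^{1,2}$ on the \emph{slit} corona $\Gamma\setminus R_{\alpha_0}$, its average is bounded below, and Poincar\'e--Wirtinger there yields the bound from $\|z-V\|^2$. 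Your per-circle argument is not wrong in spirit, but note that the sheet separation $2c(r)$ and hence your $\psi(r)$ tend to $0$ as $r\searrow\tfrac12$, which again feeds into the same competition with the slack; the paper's $2r^3$ separation is bounded below on $\Gamma$, so no such degeneration occurs. In short: your plan is viable but leaves precisely the hardest estimate open, while the paper's $r^3$ trick at a single point makes that estimate unnecessary.
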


\begin{remark}
 In our construction below, it will be clear that while $L$ is convex in $z$, it is not convex in $\Omega$ or in $Y$. Also, by replacing the exponent $3$ by larger integers $p\geq 4$ in \eqref{eq:example} below, examples of arbitrarily high regularity $C^{p-2}$ can be obtained.
\end{remark}

\paragraph{Construction of $L$.}
Define a set-valued map $f\colon\Omega\rightrightarrows Y \subset \mathbb{R}^2$ by
\begin{equation}\label{eq:example}
 f(x)=\{r^3(\cos \tfrac\theta2,\sin \tfrac\theta2):x=r(\cos\theta,\sin\theta), r\geq 0,\theta\in\R\}, \quad x\in\R^2,
\end{equation}
so that $f$ is essentially a modified version of the complex square root {$\sqrt {re^{i\theta}}=\pm\sqrt {r}e^{i\theta/2}=\pm\sqrt r(\cos\frac\theta2+i\sin\frac\theta2)$}, where we have replaced $\sqrt r$ by $r^3$. If $x\neq0$, $f(x)$ consists of exactly two points in $\mathbb{R}^2$.

Let, for $k=0,1$,
\begin{align*}
 \uu_k(r(\cos \theta,\sin\theta))&=(-1)^kr^3\left(\cos\frac\theta2,\sin\frac\theta2\right)\\
 &=r^3\left(\cos\frac{\theta+2\pi k}2,\sin\frac{\theta+2\pi k}2\right),\quad r\in[0,1),\theta\in[0,2\pi).
\end{align*}
Thus $f(x)=\{\uu_0(x),\uu_1(x)\}$ and $\uu_0(x)=-\uu_1(x)$. See Figure \ref{fig:6}.

    \begin{figure}
 \includegraphics[width=0.6\textwidth]{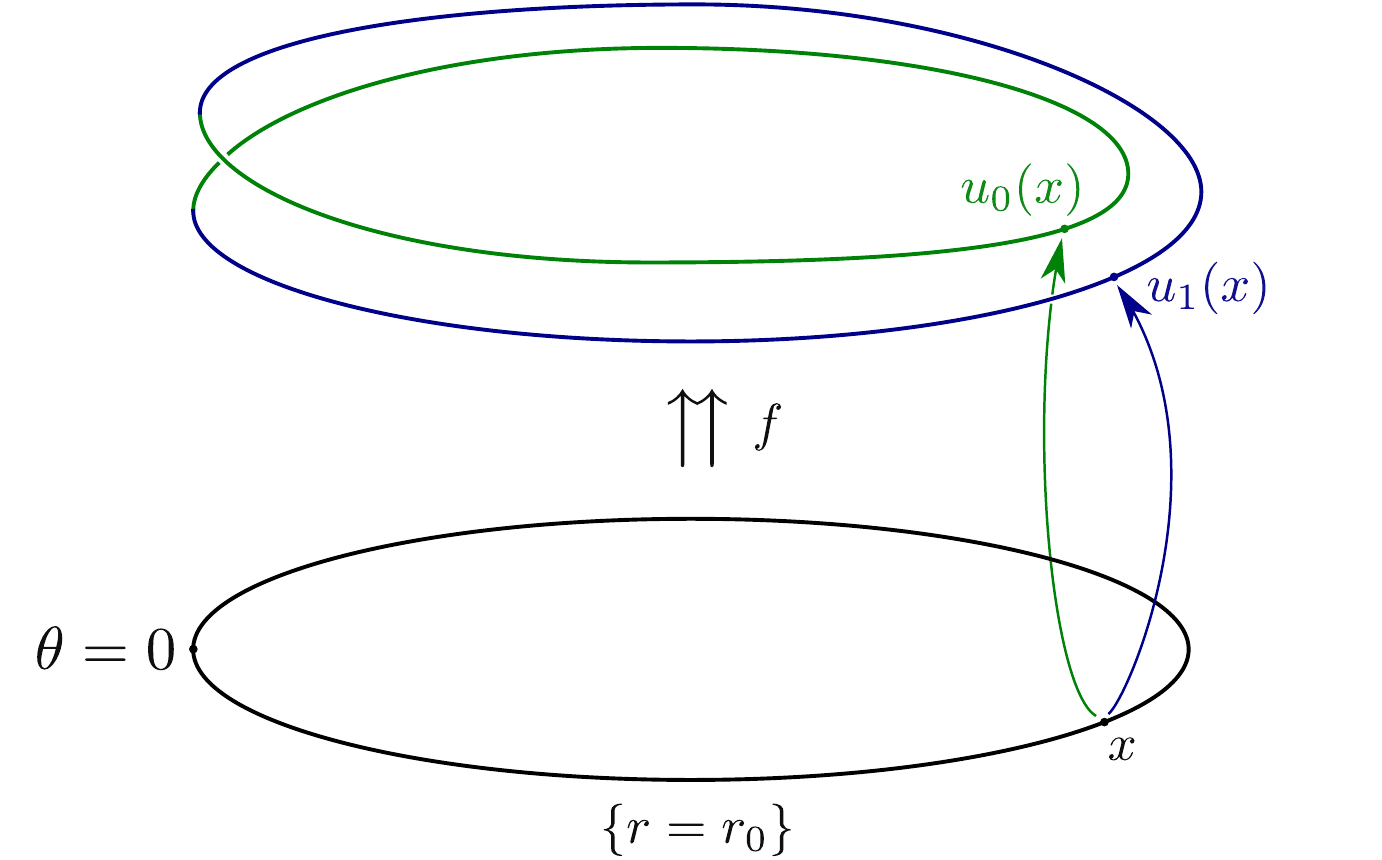}
 \centering
 \caption{This very rough scheme captures only the topological aspect of the situation to illustrate the fact that the image under the set-valued map $f$ of each circle $\{r=r_0\}$, $0<r_0<1$, is a twice-winding, non-self-intersecting cycle; on top of each point $x$ on the circle there are two points, $\uu_0(x)$ and $\uu_1(x)$. We have also marked the point corresponding to angle $\theta=0$ that is mapped to the interface between the parameterizations $\uu_0$ and $\uu_1$ of the image of $f$. %
 More accurate depictions of the situation can be found in Figures \ref{fig:5}--\ref{fig:9}.}
  \label{fig:6}
 \end{figure}

Let
\begin{equation}\label{eq:Delta}
\Delta=\{(x,y)\in \Omega\times Y:x\neq0,\;|\langle y,\uu_0(x)\rangle|> \|x\|^6/10\}.
\end{equation}
Note that $\|\uu_i(x)\|^2=\|x\|^6$, so the graph of $f$ is contained in $\Delta$; see Figures \ref{fig:5} and \ref{fig:7}. Also, for each $0\neq x\in \Omega$, the set of points $y\in Y$ with $(x,y)\in \Delta$ has two connected components corresponding to the sign of the inner product $\langle y,\uu_0(x)\rangle$. 

In order to define an auxiliary function $\psi\colon(\Omega\setminus\{0\})\times Y\to[0,1]\in C^\infty$ that will be of great utility, pick a function $\rho\in C^\infty(\R;[0,1])$ such that $\rho(r)=1$ for all $r\geq 1$ and $\rho(-r)=1-\rho(r)$, and let
\[\psi(x,y)=\rho\left(\frac{10\langle y,\uu_0(x)\rangle}{\|x\|^6}\right),\qquad (x,y)\in\Omega\times Y,\;x\neq 0.\]
Then
\begin{itemize}
\item $\psi(x,y)=1$ for $(x,y)\in\Delta$ with $\langle y,\uu_0(x)\rangle>0$, and %
 \item $\psi(x,y)=0$ for $(x,y)\in\Delta$ with  $\langle y,\uu_0(x)\rangle<0$.%
\end{itemize}

For later use we record the following properties of $\psi$ (see Figure \ref{fig:5}):
\begin{lemma}\label{lem:psi}
\begin{enumerate}[label=\roman*.,ref=(\roman*)]
    \item \label{it:psibound}$ |\psi(x,y)|\leq 1$.
    \item \label{it:Usmooth} the function
\[U(x,y)=\begin{cases}\psi(x,y)\uu_0(x)+(1-\psi(x,y))\uu_1(x),&x\neq 0,\\
0,&x=0,
\end{cases}\]
is smooth on $(\Omega\setminus\{0\})\times Y$, and can be alternatively written as
    \[U(x,y)=\begin{cases} (2\psi(x,y)-1)\uu_0(x), & x\neq 0,\\
    0,&x=0,
    \end{cases}\]
    because $\uu_0=-\uu_1$, and verifies
    \[\|U(x,y)\|=O(\|x\|^3)\]
    as $x\to 0$.
    \item \label{it:Utrivial} On $\Delta$, the function $U(x,y)$ coincides either with $\uu_0(x)$ or with $\uu_1(x)$, whichever is closest to $y$. 
    \item \label{it:Vsmooth} For $i=0,1$, let $D\uu_i$ be the $2\times 2$ matrix
\[D\uu_i=\left(\frac{\partial \uu_i}{\partial x_1} , \frac{\partial \uu_i}{\partial x_2}\right),\]
except at the points of the form $(a,0)$, $a\geq0$, where this is not defined; we define $D\uu_i$ there by extending it continuously from above, namely, 
\begin{equation*}\label{eq:defDu}D\uu_i(a,0)\coloneqq(-1)^ia^2\begin{pmatrix}
 3& 0\\ 0&1/2
\end{pmatrix},\quad a\geq0.
\end{equation*}
The function \[
    V(x,y)=\begin{cases}
      \psi(x,y)D\uu_0(x)+(1-\psi(x,y))D\uu_1(x),&x\neq 0,\\
      0,&x=0,
    \end{cases}
    \]
    is smooth on $(\Omega\setminus\{0\})\times Y$, and 
    \[\|V(x,y)\|=O(\|x\|^2)\]
    as $x\to 0$.
    \item \label{it:Vtrivial} On $\Delta$, the function $V(x,y)$ coincides either with $D\uu_0(x)$ or with $D\uu_1(x)$, according to whether $\uu_0(x)$ or $\uu_1(x)$ is closest to $y$, respectively.
\end{enumerate}
\end{lemma}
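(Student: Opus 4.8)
The plan is to peel off items \ref{it:psibound}, \ref{it:Utrivial} and \ref{it:Vtrivial} as short consequences of unwinding the definitions of $\rho$ and of $\Delta$, and to concentrate the real work on the smoothness assertions in \ref{it:Usmooth} and \ref{it:Vsmooth}. The underlying subtlety is that neither sheet $\uu_i$ nor its derivative $D\uu_i$ is continuous across the positive $x_1$-axis (where the angle $\theta$ wraps around), yet the particular combinations defining $U$ and $V$ are smooth there; the mechanism that makes this work is a sign cancellation coming from the relation $\rho(-s)=1-\rho(s)$ together with $\uu_1=-\uu_0$.

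First I would record the algebraic identities. Since $\uu_1=-\uu_0$, and since with the declared extensions at the points $(a,0)$ one has $D\uu_1=-D\uu_0$ on $\Omega\setminus\{0\}$ as well (both sides equal $(-1)^1a^2\left(\begin{smallmatrix}3&0\\0&1/2\end{smallmatrix}\right)$ there), it follows that $U=\psi\uu_0+(1-\psi)\uu_1=(2\psi-1)\uu_0$ and $V=(2\psi-1)D\uu_0$; these are exactly the alternative formulas stated in \ref{it:Usmooth} and \ref{it:Vsmooth}. Item \ref{it:psibound} is then clear because $0\le\rho\le 1$. For \ref{it:Utrivial} and \ref{it:Vtrivial}, I would note that on $\Delta$ the quantity $10\langle y,\uu_0(x)\rangle/\|x\|^6$ has modulus strictly larger than $1$ (since $|\langle y,\uu_0(x)\rangle|>\|x\|^6/10=\|\uu_0(x)\|^2/10$), so $\psi(x,y)=1$ when $\langle y,\uu_0(x)\rangle>0$ and $\psi(x,y)=0$ when $\langle y,\uu_0(x)\rangle<0$; hence $U=(2\psi-1)\uu_0$ equals $\uu_0(x)$ in the first case and $\uu_1(x)=-\uu_0(x)$ in the second, and likewise for $V$. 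The elementary identity $\|y-\uu_1(x)\|^2-\|y-\uu_0(x)\|^2=4\langle y,\uu_0(x)\rangle$ (using $\|\uu_0\|=\|\uu_1\|$) shows that in each case the selected sheet is precisely the one closer to $y$, which is what \ref{it:Utrivial} and \ref{it:Vtrivial} assert.

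The heart of the matter is smoothness, and the key observation is that, away from the origin, the two-valued map $f$ is locally a smooth \emph{single-valued} branch up to sign: on a simply connected neighborhood $N\subseteq\Omega\setminus\{0\}$ of any $x_0\ne 0$ one can choose a smooth branch $\theta(\cdot)$ of the argument and set $v(x)=\|x\|^3(\cos\tfrac{\theta(x)}{2},\sin\tfrac{\theta(x)}{2})$, which is smooth on $N$ (the factor $\|x\|^3$ being smooth because $0\notin N$), satisfies $f(x)=\{v(x),-v(x)\}$, and has $Dv$ smooth as well. Now the expressions
\[(2\rho(10\langle y,v\rangle/\|x\|^6)-1)\,v \qquad\text{and}\qquad (2\rho(10\langle y,v\rangle/\|x\|^6)-1)\,Dv\]
are invariant under $v\mapsto -v$: by $2\rho(-s)-1=-(2\rho(s)-1)$ the sign changes in the scalar factor and in $v$ (respectively $Dv$) cancel. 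Hence these expressions do not depend on the choice of local branch, patch together to functions well defined on all of $(\Omega\setminus\{0\})\times Y$, and coincide with $U$ and $V$ by the alternative formulas above. In each chart they are manifest compositions of smooth maps, so $U$ and $V$ are smooth on $(\Omega\setminus\{0\})\times Y$; I would also note that for $x_0=(a,0)$ with $a>0$ the branch taken with $\theta\in(-\pi,\pi)$ has $Dv(x_0)$ equal to the declared continuous extension of $D\uu_0$ at $(a,0)$, which is what upgrades continuity to $C^\infty$ across the positive $x_1$-axis. The growth bounds then follow from $|2\psi-1|\le 1$: $\|U(x,y)\|\le\|\uu_0(x)\|=\|x\|^3$, and writing a local branch as $v=\|x\|^3\omega$ with $\omega$ a smooth unit vector field satisfying $\|D\omega\|=O(1/\|x\|)$ gives $\|D\uu_0(x)\|=\|Dv(x)\|=O(\|x\|^2)$ (equivalently, read this off the explicit polar formula for $D\uu_0$), whence $\|V(x,y)\|=O(\|x\|^2)$; both estimates also show $U$ and $V$ extend continuously by $0$ over $\{x=0\}$.

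I expect the only genuinely delicate part of the full write-up to be the bookkeeping around the branch cut: verifying carefully that the declared extensions $D\uu_i(a,0)$ really are the derivatives of the appropriate local branch, so that the smoothness across $\{x_2=0,\ x_1>0\}$ holds in the $C^\infty$ sense and not merely continuously, and keeping the correspondence between the two connected components of $\Delta$ and the two sheets $\uu_0,\uu_1$ straight throughout. Everything else is a routine unwinding of the definitions.
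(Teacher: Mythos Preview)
Your proof is correct and rests on the same mechanism as the paper's: the sign cancellation coming from $2\rho(-s)-1=-(2\rho(s)-1)$ together with $\uu_1=-\uu_0$ (and $D\uu_1=-D\uu_0$). The paper packages this as a separate lemma (Lemma~\ref{lem:lemregularity}) which shows, by an explicit computation matching one-sided limits of all partial derivatives across the slit $\{(a,0):a>0\}$, that $(2\psi-1)u$ is $C^\infty$ whenever $u$ and all its derivatives flip sign there; it then applies this with $u=\uu_0$ and $u=D\uu_0$. Your local-branch argument is a slightly cleaner repackaging of the same idea: by writing the expression in terms of a local smooth branch $v$ and observing invariance under $v\mapsto -v$, smoothness follows at once from the chain rule without tracking individual derivatives. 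The treatment of items~\ref{it:psibound}, \ref{it:Utrivial}, \ref{it:Vtrivial} and the growth bounds is essentially the same in both.
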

\begin{proof}
 Using Lemma \ref{lem:lemregularity} below with $u=\uu_0$ and then again with $u=D\uu_0$, we see that $U(x,y)=(2\psi(x,y)-1)\uu_0(x)$ and $V(x,y)=(2\psi(x,y)-1)D\uu_0(x)$ are smooth on $(\Omega\setminus\{0\})\times Y$. The rest of the lemma is clear from the definitions.
\end{proof}

\begin{lemma}\label{lem:lemregularity}
 Let $k>0$ and let $u\colon\Omega\setminus \{(a,0):a\geq 0\}\to\R^k$ be a smooth function such that, for all derivatives $\partial^Iu$ of $u$, of any order including zero, we have that the following limits exist and satisfy
 \[\lim_{\substack{\bar a\to a\\b\searrow0}}\partial^Iu(\bar a,b)=-\lim_{\substack{\bar a\to a\\b\nearrow0}}\partial^Iu(\bar a,b),\quad a> 0.\]
 Assume additionally that \begin{equation}\label{eq:slitcontinuity} 
 u(a,0)=\lim_{b\searrow0}u(a,b),\quad a>0.
 \end{equation}
 Then $(2\psi(x,y)-1)u(x)$ is $C^{\infty}$ on $(\Omega\setminus\{0\})\times Y$.
\end{lemma}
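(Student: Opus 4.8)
The plan is to strip the statement down to a single phenomenon: two sign‑flips that cancel. Set $g(t)=2\rho(t)-1$; since $\rho(-t)=1-\rho(t)$, this $g$ is $C^\infty$, odd, and identically $\pm1$ outside $[-1,1]$, and with $\phi(x,y):=10\langle y,\uu_0(x)\rangle/\|x\|^6$ one has $2\psi(x,y)-1=g(\phi(x,y))$. So the object to study is $F:=(g\circ\phi)\,u$ on $(\Omega\setminus\{0\})\times Y$, where on the slit $N:=\{(a,0):0<a<1\}$ the value of $u$ is understood as $\lim_{b\searrow0}u$. Off $N\times Y$ the functions $u$, $\uu_0$, $x\mapsto\|x\|^{-6}$ and $g$ are all smooth, so $F$ is smooth there and the whole issue is smoothness across $N\times Y$.

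Fix $x_0=(a_0,0)\in N$, any $y_0\in Y$, and a small ball $B\ni x_0$ with $\overline B\subset\Omega\setminus\{0\}$; write $B^{\pm}=\{x\in B:\pm x_2>0\}$, so $B=B^+\cup(N\cap B)\cup B^-$. First I would record that $\uu_0$ extends smoothly across $N$: identifying $\R^2$ with $\mathbb{C}$, the definition of $\uu_0$ gives $\uu_0(x)^2=|x|^{5}x$, and since $|x|^{5}x$ is $C^\infty$ near $x_0$ with value $a_0^6>0$ there, the principal square root yields $\uu_0^+\in C^\infty(B)$ with $\uu_0^+=\uu_0$ on $B^+\cup N$ and, by the explicit formula (crossing $\theta=0$ to $\theta$ near $2\pi$), $\uu_0=-\uu_0^+$ on $B^-$. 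Hence, writing $\phi^+(x,y):=10\langle y,\uu_0^+(x)\rangle/\|x\|^6\in C^\infty(B\times Y)$, we get $\phi=\phi^+$ on $(B^+\cup N)\times Y$ and $\phi=-\phi^+$ on $B^-\times Y$. Next, the hypotheses on $u$ say precisely that $u$ and all its partial derivatives extend continuously up to $N$ from each side; write $u^+$ for the extension on $B^+\cup N$ and $u^-$ for that on $B^-\cup N$. Each is $C^\infty$ up to $N$, the antisymmetry hypothesis gives $\partial^I u^+|_N=-\partial^I u^-|_N$ for every multi‑index $I$, and \eqref{eq:slitcontinuity} gives $u|_N=u^+|_N$.

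Now I would compare the two one‑sided Taylor jets of $F$ along $N\times Y$. On $(B^+\cup N)\times Y$ one has $F=g(\phi^+)\,u^+$, the restriction of a function in $C^\infty(B\times Y)$; call its jet along $N\times Y$ $J^+$. On $B^-\times Y$, using $\phi=-\phi^+$ and that $g$ is odd, $F=g(-\phi^+)\,u^-=-g(\phi^+)\,u^-$, which is $C^\infty$ up to $N\times Y$ with jet $J^-$ along $N\times Y$ equal to that of $g(\phi^+)(-u^-)$. Since $\partial^I(u^++u^-)|_N=\partial^I u^+|_N+\partial^I u^-|_N=0$ for all $I$, Leibniz's rule shows $g(\phi^+)(u^++u^-)$ has vanishing jet along $N\times Y$, i.e.\ $J^+=J^-$. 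A function that is $C^\infty$ up to a smooth hypersurface from both sides and whose Taylor jets match along it is $C^\infty$ across the hypersurface — a standard fact (extend the $B^+$‑side to a full neighbourhood by a $C^\infty$ extension theorem of Seeley type; the remainder is identically zero on $B^+\cup N$ and has vanishing jet along $N\times Y$, hence is $C^\infty$). Therefore $F\in C^\infty(B\times Y)$; letting $(x_0,y_0)$ range over $N\times Y$ and adding the off‑slit case finishes the proof.

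The only point that is more than bookkeeping is seeing that the sign‑flip of $\phi$ (forced by $\uu_0$ being locally a branch of $\sqrt{|x|^{5}x}$) and the sign‑flip of $u$ (the antisymmetry hypothesis) cancel \emph{at the level of the full Taylor jet along $N\times Y$}, not merely of values — on $B^-\times Y$ one only has $F=g(\phi^+)u^+$ modulo a term whose jet along $N\times Y$ vanishes, which is exactly what is needed. Upgrading "matching jets" to genuine smoothness needs $u$ to be $C^\infty$ up to $N$ on each side so the gluing step applies, which is precisely what the stated hypotheses provide. The subsequent uses of this lemma in Lemma \ref{lem:psi} then only require checking that $\uu_0$ and $D\uu_0$ satisfy these hypotheses, which is immediate from their explicit formulas.
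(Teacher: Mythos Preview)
Your proof is correct and rests on the same observation as the paper's: the oddness of $g=2\rho-1$ together with the sign flip of $\uu_0$ across the slit exactly cancels the hypothesised sign flip of $u$, so the product matches smoothly from both sides. Your packaging is a bit more systematic---you introduce the smooth branch $\uu_0^+$ and the globally smooth $\phi^+$, then argue via matching Taylor jets and a Seeley-type extension---whereas the paper carries out the same cancellation by a direct chain of equalities showing $\lim_{b\searrow0}\partial^I F=\lim_{b\nearrow0}\partial^I F$ and then invokes the elementary fact that continuity of partial derivatives implies differentiability; but the mathematical content is the same.
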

\begin{proof}
 Fix $y\in Y$ and $a>0$. Take sequences $(a_i)\subset\R$, $(b_i)\subset\R_{>0}$, $(y_i)\subset\R^2$ such that $a_i\to a$, $b_i\searrow0$, $y_i\to y$. We have, using $\rho(r)=1-\rho(-r)$, for every multi-index $I$, and every $a>0$ and $y\in Y$,
\begin{align*}
  \lim_{\substack{\bar a\to a\\ b\searrow 0\\\bar y\to y}}\partial^I[(2\psi((\bar a,b),\bar y)-1)u(\bar a,b)]
  &=\lim_{i\to+\infty}\partial^I[(2\psi((a_i,b_i),y_i)-1)u(a_i,b_i)]\\
  &=\lim_{i\to+\infty}\partial^I[(2\rho\left(\frac{10\langle y_i,\uu_0(a_i,b_i)\rangle}{\|(a_i,b_i)\|^6}\right)-1)u(a_i,b_i)]\\
  &=\lim_{i\to+\infty}\partial^I[(2(1-\rho\left(-\frac{10\langle y_i,\uu_0(a_i,b_i)\rangle}{\|(a_i,b_i)\|^6}\right))-1)u(a_i,b_i)] \\
  &=\lim_{i\to+\infty}
  \partial^I[(2(1-\rho\left(\frac{10\langle y_i,-\uu_0(a_i,b_i)\rangle}{\|(a_i,b_i)\|^6}\right))-1)u(a_i,b_i)] \\
    &=\lim_{i\to+\infty}\partial^I[(2(1-\rho\left(\frac{10\langle y_i,\uu_0(a_i,-b_i)\rangle}{\|(a_i,b_i)\|^6}\right))-1)u(a_i,b_i)] \\
  &=\lim_{i\to+\infty}\partial^I[-(2\rho\left(\frac{10\langle y_i,\uu_0(a_i,-b_i)\rangle}{\|(a_i,b_i)\|^6}\right)-1)u(a_i,b_i)] \\
   &=\lim_{i\to+\infty}\partial^I[(2\rho\left(\frac{10\langle y_i,\uu_0(a_i,-b_i)\rangle}{\|(a_i,-b_i)\|^6}\right)-1)u(a_i,-b_i)] \\
    &=\lim_{i\to+\infty}\partial^I[(2\psi((a_i,-b_i),y_i)-1)u(a_i,-b_i)]\\
    &=\lim_{\substack{\bar a\to a\\ b\nearrow 0\\\bar y\to y}}\partial^I[(2\psi((\bar a,b),\bar y)-1)u(\bar a,b)]
  \end{align*} 
  This means that all derivatives of $(2\psi(x,y)-1)u(x)$ exist on $\{(a,0):a> 0\}$. 
  A similar calculation, together with \eqref{eq:slitcontinuity}, shows that $(2\psi(x,y)-1)u(x)$ is continuous. This shows that $(2\psi(x,y)-1)u(x)$ is $C^\infty$ on $(\Omega\setminus\{0\})\times Y$, as the continuity of the partial derivatives near a given point implies their existence at the point.
\end{proof}
    \begin{figure}
 \includegraphics[width=0.7\textwidth]{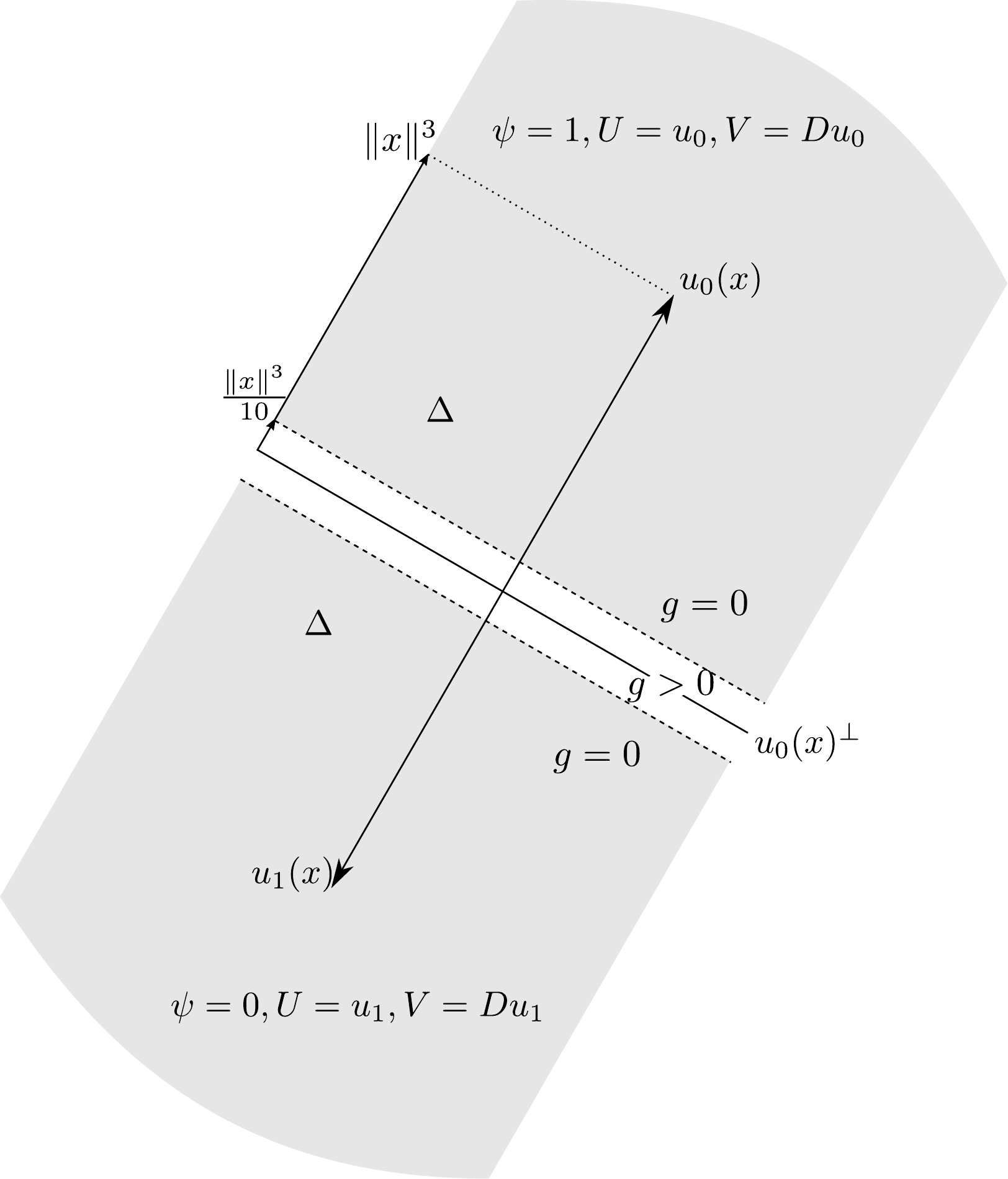}
 \centering
 \caption{For $x\in\Omega$, this is the plane $\{x\}\times Y$. We have shaded the region $\Delta$, and indicated the vectors $\uu_0(x)$ and $\uu_1(x)=-\uu_0(x)$, together with their length, $\|x\|^3$, and the distance from $\Delta$ to the origin, $\|x\|^3/10$. We have also indicated what the values of $\psi$, $U$, and $V$ are on each of the connected components of $\Delta\cap (\{x\}\times Y)$. We have also included a reminder that $g$ (defined just after Lemma \ref{lem:psi}) is positive only outside of $\Delta$. }
  \label{fig:5}
 \end{figure}

    \begin{figure}
 \includegraphics[width=0.35\textwidth]{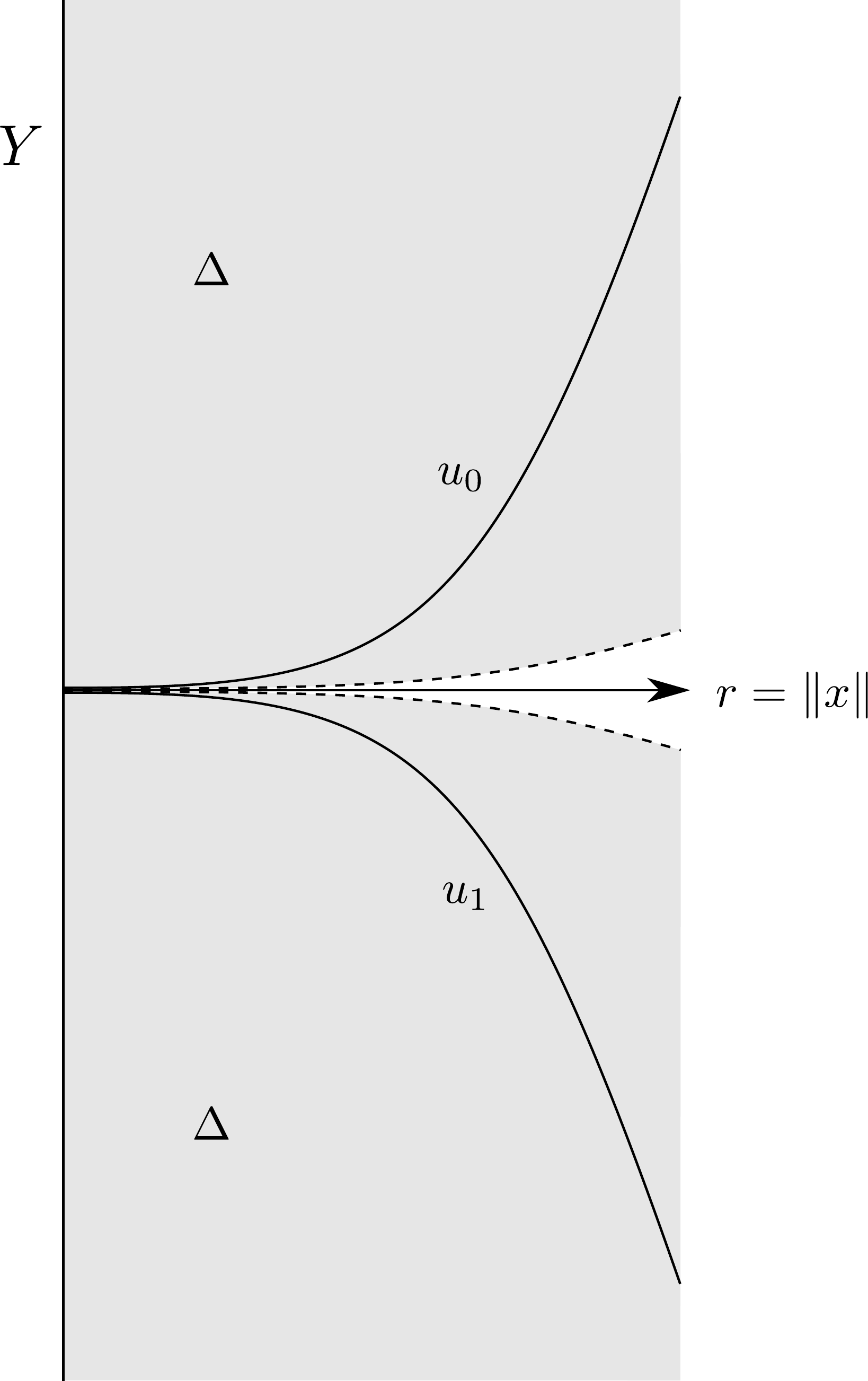}
 \centering
 \caption{Radial scheme of the graph of $f$, made up of those of $\uu_0$ and $\uu_1$, and of $\Delta$ (shaded, with dashed boundary).}
  \label{fig:7}
 \end{figure}

Take also a positive function $g\colon\Omega\times Y\to\R$ that will be auxiliary at helping us force minimizers of the proposed Lagrangian $L$ (to be defined below) to be supported in $\Delta$. We take $g$ such that
\begin{itemize}
\item $g\in C^\infty(\Omega\times Y)$,
\item $g(x,y)\geq0$,
\item $g(x,y)=0$ for all $(x,y)\in \Delta$, and
\item $g$ verifies
\begin{equation}\label{eq:grel}
 \|y-U(x,y)\|^2+g(x,y)\geq \min_{i\in\{0,1\}}\|y-\uu_i(x)\|^2
\end{equation}
if $(x,y)\notin \Delta$.  

Observe that, by Lemma \ref{lem:psi}\ref{it:Usmooth}--\ref{it:Utrivial}, the function 
\[S(x,y)\coloneqq\min_i\|y-\uu_i(x)\|^2-\|y-U(x,y)\|^2\] 
vanishes on $\Delta$  and is smooth everywhere except at the locus of  points $(x,y)$ with $\langle y,u_0(x)\rangle=0$, since it is there that $\|y-\uu_0(x)\|=\|y-\uu_1(x)\|$; indeed, this is a consequence of the calculation
\begin{align*}
    \|y-\uu_1(x)\|^2-\|y-\uu_0(x)\|^2=2\langle y,\uu_0(x)-\uu_1(x)\rangle+\|\uu_1(x)\|^2-\|\uu_0(x)\|^2=4\langle y,\uu_0(x)\rangle,
\end{align*}
which is true since $\uu_0=-\uu_1$.
Also, by Lemma \ref{lem:psi}\ref{it:Usmooth}, $S(x,y)=O(\|x\|^3)$ as $x\to 0$.  Thus in order to get a function $g$ that complies with inequality \eqref{eq:grel}, it suffices to take $g$ equal to $S$ %
in a small neighborhood of $\Delta$ while ensuring that it remains  $\geq S$ everywhere.
\end{itemize}
The function $g$ will force the minimizers to be supported within $\Delta$. Remark that $g(0,0)=0$ because $g$ is $C^\infty$, $g$ vanishes on $\Delta$, and $(0,0)\in\overline \Delta$.

Now we can define $L\colon \Omega\times Y\times Z\to\R$ to be given by
\begin{equation}\label{eq:longdefL}
    L(x,y,z)=\|y-U(x,y)\|^2+\|z-V(x,y)\|^2+g(x,y);
\end{equation}
in other words,
\begin{multline*}
 L(x,y,z)=\|y-\psi(x,y)\uu_0(x)-(1-\psi(x,y))\uu_1(x)\|^2\\
 +\|z-\psi(x,y)D\uu_0(x)-(1-\psi(x,y))D\uu_1(x)\|^2+g(x,y),
 \end{multline*}
for $x\neq 0$, $(x,y,z)\in \Omega\times Y\times Z$,
and 
\[L(0,y,z)=\|y\|^2+\|z\|^2,\quad (y,z)\in Y \times Z.\]
Observe that on $(x,y)\in\Delta$ the expression \eqref{eq:longdefL} simplifies to
\begin{equation}\label{eq:defL}
 L(x,y,z)=\|y-\uu_i(x)\|^2+\|z-D\uu_i(x)\|^2\quad \textrm{if}\quad i=\operatornamewithlimits{arg\,min}_{j\in\{0,1\}}\|y-\uu_j(x)\|^2
\end{equation}
because $g$ vanishes on $\Delta$ and because of Lemma \ref{lem:psi}.
\begin{lemma}\label{lem:regularityL}
 $L$ is of class $C^{1,1}_{\mathrm{loc}}$.
\end{lemma}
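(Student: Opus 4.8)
The plan is to split the argument according to whether $x=0$ or not: on $\{x\neq 0\}$ the function $L$ is actually $C^\infty$, and the only work is at the slice $\{x=0\}$, where $\psi$, $U$, $V$ and the piecewise (on/off $\Delta$) description of $L$ all degenerate.

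First I would dispose of $\{x\neq 0\}$. On $(\Omega\setminus\{0\})\times Y$ the cutoff $\psi$ is $C^\infty$ (the denominator $\|x\|^6$ does not vanish and $\uu_0$ is smooth away from the slit $\{(a,0):a\ge0\}$), and $U$, $V$ are $C^\infty$ there by Lemma~\ref{lem:psi}\ref{it:Usmooth},\ref{it:Vsmooth} (in particular they are smooth across the slit, where $\psi$ flips sign in step with $\uu_0$); $g$ is $C^\infty$ by construction. Since $L$ is built from $U$, $V$, $g$, the coordinate maps, and the smooth functions $w\mapsto\|w\|^2$ and $(a,b)\mapsto\langle a,b\rangle$ by finitely many sums, products and compositions, it is $C^\infty$ on $(\Omega\setminus\{0\})\times Y\times Z$, hence a fortiori $C^{1,1}_{\mathrm{loc}}$ there.

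Then I would handle $\{x=0\}$. Using $\|U(x,y)\|=O(\|x\|^3)$ and $\|V(x,y)\|=O(\|x\|^2)$ (Lemma~\ref{lem:psi}) one sees that, locally uniformly in $(y,z)$, $L(x,y,z)\to\|y\|^2+\|z\|^2+g(0,y)$ as $x\to0$, which is consistent with the prescribed value $L(0,0,z)=\|z\|^2$ because $g(0,0)=0$; so $L$ extends continuously by $L(0,y,z)=\|y\|^2+\|z\|^2+g(0,y)$. Writing $L=\Lambda+R$ with $\Lambda(x,y,z)=\|y\|^2+\|z\|^2+g(x,y)$ smooth on all of $\Omega\times Y\times Z$ and $R(x,y,z)=-2\langle y,U\rangle+\|U\|^2-2\langle z,V\rangle+\|V\|^2$ (and $R\equiv0$ on $\{x=0\}$), it suffices to show $R\in C^{1,1}_{\mathrm{loc}}$ near $\{x=0\}$. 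For this I would establish, locally uniformly in $(y,z)$, the orders of vanishing $|R|=O(\|x\|^2)$, $\|DR\|=O(\|x\|)$ and $\|D^2R\|=O(1)$ as $x\to0$: the first two give that $R$ is differentiable at every point of $\{x=0\}$ with $R=DR=0$ there and that $DR$ is continuous, and the third, together with the $C^\infty$ regularity off $\{x=0\}$, gives that $DR$ is Lipschitz on a neighbourhood of $\{x=0\}$; hence $R$, and so $L=\Lambda+R$, is $C^{1,1}_{\mathrm{loc}}$.

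The hard part will be these decay estimates near $x=0$. They rest on two ingredients. First, the branch map $\uu_0$ is homogeneous of degree $3$ in $x$ (so $D^k\uu_0$ is homogeneous of degree $3-k$), whence $\|D^k\uu_0(x)\|=O(\|x\|^{3-k})$ for $k=0,1,2,3$; this can be read off the complex-analytic description $\uu_0\leftrightarrow w^{7/4}\bar w^{5/4}$, or checked in polar coordinates. Second, $\psi=\rho\circ t$ with $t=10\langle y,\uu_0(x)\rangle/\|x\|^6$, and each $\rho^{(j)}$ with $j\ge1$ is supported in $[-1,1]$, so $\rho^{(j)}(t(x,y))$ vanishes off the slab $\{\,|\langle y,\uu_0(x)\rangle|\le\|x\|^6/10\,\}$; on that slab the a~priori singular factors $\|x\|^{-k}$ produced by differentiating $t$ must be absorbed by the vanishing of $\uu_0$, $D\uu_0$ and by the bound $|\langle y,\uu_0(x)\rangle|\le\|x\|^6/10$. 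Expanding $U=(2\psi-1)\uu_0$ and $V=(2\psi-1)D\uu_0$ by the product rule and carrying out this bookkeeping term by term — noting in particular that a $y$-derivative of $\psi$ always enters $U$, resp.\ $V$, multiplied by $\uu_0(x)$, resp.\ $D\uu_0(x)$, and that the term $-2\langle z,V\rangle$ is linear in $z$ so the $z$-factor is harmless on compacta — should yield the claimed orders for $R$. This careful accounting of cancellations as $x\to0$ is the one genuine difficulty; everything else is a routine product-and-chain-rule computation combined with the smoothness of $g$.
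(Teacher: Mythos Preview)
Your overall plan—smooth off $\{x=0\}$, then analyse the remainder $R$ at $\{x=0\}$—matches the paper's, but the key step differs: the paper never attempts to bound $D^2R$. Instead it writes down directly the second–order Taylor inequality
\[
\bigl|L(x,y,z)-L(0,y',z')-2\langle y',y-y'\rangle-2\langle z',z-z'\rangle\bigr|\;=\;O\!\bigl(\|(x,y,z)-(0,y',z')\|^{2}\bigr),
\]
using only the zeroth–order decay $\|U\|=O(\|x\|^{3})$, $\|V\|=O(\|x\|^{2})$ (Lemma~\ref{lem:psi}) and the smoothness of $g$, and then invokes Fathi's criterion \cite[Prop.~4.11.3]{fathi2008weak} to conclude that $DL$ is locally Lipschitz. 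No derivative of $\psi$, $U$ or $V$ is ever estimated.

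Your route via $\|D^{2}R\|=O(1)$ has a genuine gap: the term-by-term bookkeeping you describe does not close. Since $t(x,y)=10\langle y,\uu_0(x)\rangle\,\|x\|^{-6}$ is homogeneous of degree $-3$ in $x$, one has $\partial_x t=O(\|y\|\,\|x\|^{-4})$, so two $x$–derivatives of $\psi=\rho(t)$ bring in a factor $(\partial_x t)^{2}=O(\|y\|^{2}\|x\|^{-8})$ on $\operatorname{supp}\rho''$. The bound $|\langle y,\uu_0\rangle|\le\|x\|^{6}/10$ you plan to use only controls $t$ itself, not $\partial_x t$, whose size is governed by $\langle y,\partial_x\uu_0\rangle$—and that quantity has no extra smallness on the slab $\{|t|\le1\}$. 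Consequently, for example,
\[
\partial_{x_ix_j}^{2}\|V\|^{2}
=\partial_{x_ix_j}^{2}\!\bigl[(2\psi-1)^{2}\|D\uu_0\|^{2}\bigr]
\]
contains the term $\rho''(t)\,(\partial_x t)^{2}\,\|D\uu_0\|^{2}=O(\|y\|^{2}\|x\|^{-8}\cdot\|x\|^{4})=O(\|y\|^{2}\|x\|^{-4})$, which is \emph{not} bounded as $x\to0$ for fixed $y\ne0$; the analogous term in $\partial_{xx}^{2}\|U\|^{2}$ is $O(\|y\|^{2}\|x\|^{-2})$. These singular contributions are not cancelled by the remaining product-rule terms, so the claimed uniform bound on $D^{2}R$ cannot be obtained by the mechanism you outline. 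The paper's device—proving the quadratic remainder estimate directly and appealing to Fathi—is precisely what sidesteps this difficulty.
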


\begin{proof}[Proof of Lemma \ref{lem:regularityL}]
From Lemma \ref{lem:psi}, we know that $U$ and $V$ are $C^\infty$ on $(\Omega\setminus\{0\})\times Y$.
This, together with the expression \eqref{eq:longdefL} defining $L$ away from the origin, and the smoothness of $g$, we conclude that $L$ is $C^\infty$ on $(\Omega\setminus\{0\})\times Y$. 
For fixed $y'\in Y$ and $z'\in Z$, as $(x,y,z)\to(0,y',z')$, using the estimates from Lemma \ref{lem:psi} as well as the fact that 
\[g(x,y)=\begin{cases}O(\|x\|^2+\|y\|^2),& y'\neq 0,\\
0&y'=0,\end{cases}\] 
as $(x,y)\to(0,y')$ (which follows from $g$ being smooth, nonnegative, and vanishing at the origin, $g(0,0)=0$, because then necessarily $\nabla g(0,0)=0$; and from $g(x,y)=0$ on a neighborhood of every point $(0,y)$, $y\neq 0$, as this point belongs to the closure $ \overline\Delta$ and, in a small-enough neighborhood of $(0,y)$, $\Delta$ is dense), we have
\begin{align*}
  |L(x,y,z)&-L(0,y',z')-2\langle y',y-y' \rangle-2\langle z',z-z' \rangle|  \\
  &=|\|y-U(x,y)\|^2+\|z-V(x,y)\|^2+g(x,y)\\
  &\qquad -\|y'\|^2-\|z'\|^2-2\langle y',y-y' \rangle-2\langle z',z-z'\rangle|\\
  &= |\|y\|^2-\|y'\|^2-2\langle y',y-y' \rangle\\
  &\qquad+\|U(x,y)\|^2-2\langle y,U(x,y)\rangle\\
  &\qquad +\|V(x,y)\|^2-2\langle z, V(x,y)\rangle+g(x,y)\\
  &\qquad +\|z\|^2-\|z'\|^2-2\langle z',z-z'\rangle|\\
  &\leq \|y-y'\|^2%
  +O(\|x\|^6+\|y\|\|x\|^3\\
  &\qquad +\|x\|^4+\|z\|\|x\|^2+\chi_{y'\neq0}(\|x\|^2+ \|y\|^2))\\
  &\qquad +\|z-z'\|^2\\
  &\leq O(\|x\|^2+\|y-y'\|^2+\|z-z'\|^2)\\
  &= O(\|(x,y,z)-(0,y',z')\|^2).
\end{align*}
Here, $\chi_{y'\neq 0}\in\{0,1\}$ vanishes when $y'=0$ and is 1 otherwise.
Then \cite[Proposition 4.11.3]{fathi2008weak} implies that the derivative is locally Lipschitz continuous.
\end{proof}

\begin{proof}[Proof of the theorem]
We present the proof in several steps.

\noindent\textbf{Step 1.} $\displaystyle\min_{(\mu,\mu_\partial)\in \mathcal M}\int_{\Omega\times Y\times Z}L\,d\mu=0.$

The map $f$ can be encoded using the measure $\mu$ on $\Omega\times Y\times Z$ defined by  the pushforwards
\[\mu=\tfrac12\xi_{0\#}dx+\tfrac12\xi_{1\#}dx\]
where $dx$ is 
{the} Lebesgue measure on $\Omega$, and $\xi_i\colon\Omega\to\Omega\times Y\times Z$ is the map
\[\xi_i(x)=(x,\uu_i(x),D\uu_i(x)),\quad x\in \Omega.\]
The corresponding boundary measure $\mu_\partial$ is uniquely determined by $\mu$ and is given by $\mu_\partial=\frac12\zeta_{0\#}\sigma(x)+\frac12\zeta_{1\#}\sigma(x)$, where $\sigma$ is the uniform measure on the unit circle with mass $2\pi$ and $\zeta_i(x)=(x,u_i(x))$. The pair $(\mu,\mu_\partial)$ is in $\mathcal M$.

By the definition \eqref{eq:Delta} of $\Delta$, it holds that $(x,\uu_i(x)) \in \Delta$ for $i \in \{0,1\}$. Therefore the $(x,y)$-marginal of $\mu$ is supported in $\Delta$, where $L$ is given by \eqref{eq:defL} (see also Figure~\ref{fig:5}).
It follows that
\begin{align*}
    \int_{\Omega\times Y\times Z} L\,d\mu = \int_{\Delta \times Z} L\,d\mu 
    &=\frac12\sum_{i=0}^1\int_{\Omega}L(x,\uu_i(x),D\uu_i(x))\,dx\\
    &=\frac12\sum_{i=0}^1 \int_{\Omega}\|\uu_i(x)-\uu_i(x)\|^2+\|D\uu_i(x)-D\uu_i(x)\|^2\,dx=0,
\end{align*}
Since the integrand is nonnegative, this is  the minimum of the integral of $L$ over any measure $\mu$ with $(\mu,\mu_\partial)\in\mathcal M$.

\noindent\textbf{Step 2.} \emph{Reparameterization of $f$ using $\bar \uu_\alpha$ and choice of $\alpha_0$.}

For $\alpha \in \R$, let $\bar \uu_\alpha$ be the $\R^2$-valued function on $\Omega$ given by
\begin{equation}\label{eq:ui}
 \bar \uu_\alpha(r\cos (\theta+\alpha),r\sin(\theta+\alpha))=%
 r^3\left(\cos \frac{\theta+\alpha}2,\sin  \frac{\theta+\alpha}2\right),\quad r\in [0,1),\,\theta\in [0,2\pi), 
\end{equation}
so that $\bar \uu_\alpha=-\bar \uu_{\alpha+2\pi}$. 
Thus if $\alpha\in [0,2\pi)$ then
\begin{equation}\label{eq:ubarexplain}
 \bar \uu_\alpha(x)=\begin{cases}
 \uu_1(x),&\textrm{for $\theta(x)\in [\alpha,2\pi)$},\\
 \uu_0(x),&\textrm{for $\theta(x)\in [0,\alpha)$},
\end{cases}\quad  
\bar \uu_{\alpha+2\pi}(x)=\begin{cases}
 \uu_0(x),&\textrm{for $\theta(x)\in [\alpha,2\pi)$},\\
 \uu_1(x),&\textrm{for $\theta(x)\in [0,\alpha)$},
\end{cases}
\end{equation}
where $x\in\Omega$ and $\theta(x)\in[0,2\pi)$ is the polar angle of $x=r(\cos\theta(x),\sin\theta(x))$.  Therefore $\uu_0=\bar \uu_0$ and $\uu_1=\bar \uu_{2\pi}$. Just like $\uu_0$ and $\uu_1$ parameterize the image of $f$ and the jump between the two happens at angle 0 (see Figure \ref{fig:6}), for each $\alpha\in\R$ the functions $\bar \uu_{\alpha}$ and $\bar \uu_{\alpha+2\pi}=-\bar \uu_{\alpha}$ give another parametrization of the image of $f$, with the jump from one chart $\uu_1$ to the other $\uu_0$ at angle $\alpha$.

    \begin{figure}
 \includegraphics[width=0.7\textwidth]{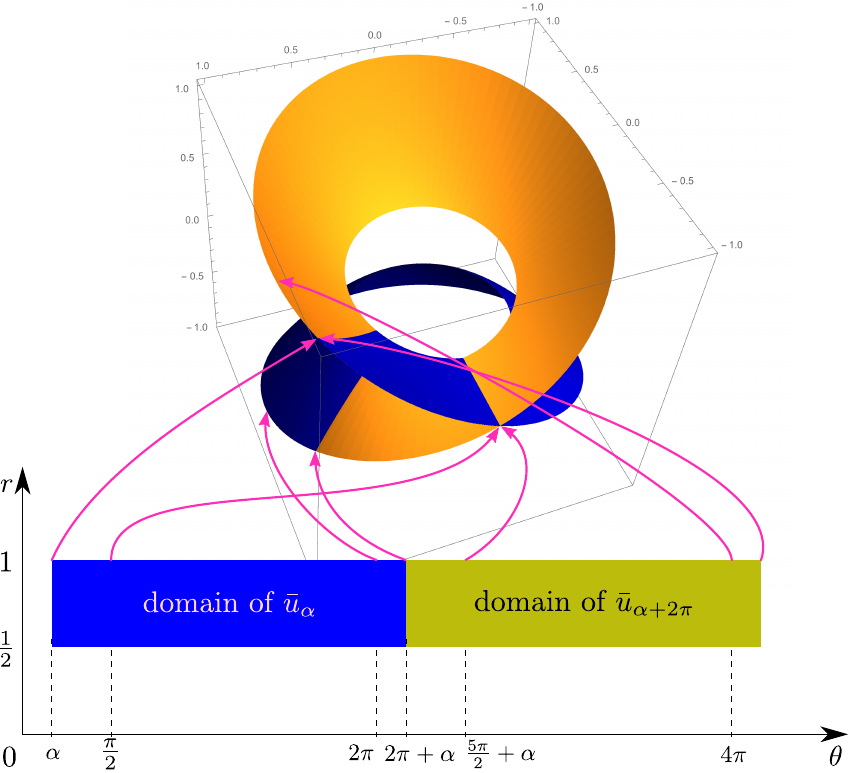}
 \centering
 \caption{This 3-dimensional projection of the graph of $f|_\Gamma$ under the map $(x_1,x_2,y_1,y_2)\mapsto(x_1,x_2,y_1)$ has been colored to distinguish the images of $\uu_\alpha$ and $\uu_{\alpha+2\pi}$. We have also represented the domains, in polar coordinates, of these functions, and indicated where some points are mapped. Note that the apparent self-intersection is an artifact of the projection that does not occur in reality.}
  \label{fig:9}
 \end{figure}

Let $\Gamma\subset \Omega$ be the corona consisting of points $x$ with radius $\frac12\leq |x|\leq 1$, whose area is $|\Gamma|=3\pi/4$.
Take 
\[E=\frac{1}{\denomE}.\]
Let $h\colon\Omega\to Y$ be any function of class $W^{1,2}(\Omega)$, a candidate  solution to the optimization problem on the left-hand side of \eqref{eq:gap}.

    \begin{figure}
 \includegraphics[width=0.65\textwidth]{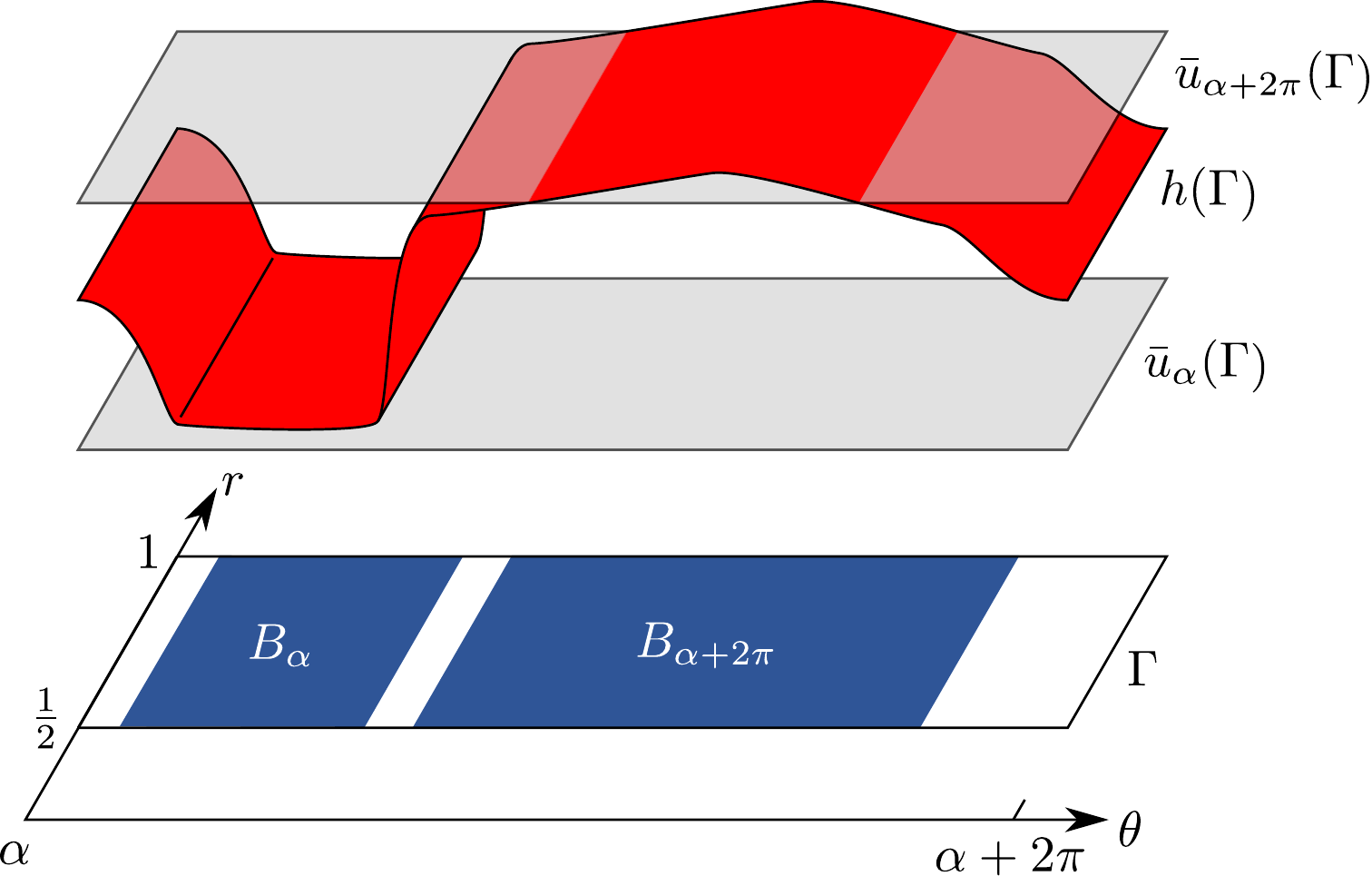}
 \centering
 \caption{In polar coordinates $(\theta,r)$, the corona $\Gamma$ can be parameterized by the rectangle $[\alpha,\alpha+2\pi]\times[\tfrac12,1]$, and its image under $\bar \uu_\alpha$ and $\bar \uu_{\alpha+2\pi}$ is a double-covering. In the picture, we illustrate the definition of the disjoint sets $B_\alpha$ and $B_{\alpha+2\pi}$ for a given function $h$; these sets are the subsets of $\Gamma$ in which $h$ is $E$-close to $\bar \uu_\alpha(\Gamma)$ and $\bar \uu_{\alpha+2\pi}(\Gamma)$, respectively. Changing $\alpha$ translates the picture in the $\theta$ direction. While $h$ is $2\pi$-periodic in $\theta$,  the overall picture is $4\pi$-periodic because the right-hand border $\bar \uu_{\alpha}(\{\alpha+2\pi\}\times[\tfrac12,1])$ coincides with the left-hand border $\bar \uu_{\alpha+2\pi}(\{\alpha\}\times[\tfrac12,1])$, and similarly for  $\bar \uu_{\alpha+2\pi}(\{\alpha+2\pi\}\times[\tfrac12,1])$ and $\bar \uu_{\alpha}(\{\alpha\}\times[\tfrac12,1])$. As we move $\alpha$, the union $B_\alpha\cup B_{\alpha+2\pi}$ does not change, but the contents of the sets $B_\alpha$ and $B_{\alpha+2\pi}$ get gradually interchanged. Although we have drawn $h$, $B_\alpha$ and $B_{\alpha+2\pi}$ as independent of $r$, this need not be the case.}
  \label{fig:8}
 \end{figure}

For $\alpha\in\R$, let $B_\alpha\subset \Omega$ be defined by
\[
B_\alpha = \{x\in\Gamma \,: \, \|h(x)-\bar \uu_\alpha(x)\|\leq E\},
\]
(see Figure \ref{fig:8}). Given $\alpha$, the union $B_\alpha\cup B_{\alpha+2\pi}$ is, because of \eqref{eq:ubarexplain}, the set of points $x\in \Gamma$ such that $h(x)$ is $E$-close to $f(x)$. As the angle $\alpha$  that determines which of those points are in $B_\alpha$ and which are in $B_{\alpha+2\pi}$ varies, the areas of these sets vary continuously; in other words, $\alpha\mapsto |B_\alpha|$ is continuous. Let 
\[\varphi(\alpha)=|B_\alpha|-|B_{\alpha+2\pi}|.\]
Then $\varphi$ is continuous, verifies $\varphi(\alpha)=-\varphi(\alpha+2\pi)$, and is $4\pi$-periodic in $\alpha$. By the intermediate value theorem, there is some $\alpha_0\in[0,2\pi)$ such that  $\varphi(\alpha_0)=0$. In particular, with our choice of $\alpha_0$ we have 
\[|B_{\alpha_0}|=|B_{{\alpha_0}+2\pi}|=\frac{|B_{\alpha_0}|+|B_{{\alpha_0}+2\pi}|}{2}.\]

\noindent\textbf{Step 3.}
\emph{Bound for $h$ when $|B_{\alpha_0}|=|B_{{\alpha_0}+2\pi}|< |\Gamma|/\denomA$.}

Let $j(x)\in \{0,1\}$ be given by%
\[j(x)=\operatornamewithlimits{arg\,min}_{j\in\{0,1\}}\|h(x)-\bar \uu_{{\alpha_0}+2\pi j}(x)\|,\quad x\in\Gamma. \]
We have, from \eqref{eq:grel} and \eqref{eq:ubarexplain},
\[\|h(x)-U(x,h(x))\|^2+g(x,h(x))\geq \|h(x)-\bar \uu_{{\alpha_0} + 2\pi j(x)}(x)\|^2.\]
We use this to get the uniform bound 
\begin{align*}
 \int_\Omega L(x,h(x),Dh(x))dx
 &=\int_\Omega \|h(x)-U(x,h(x))\|^2\\
 &\qquad +\|Dh(x)-V(x,h(x))\|^2+g(x,h(x))\,dx\\
 &\geq\int_\Gamma \|h(x)-U(x,h(x))\|^2+g(x,h(x))\,dx\\
 &\geq \int_\Gamma \|h(x)-\bar \uu_{{\alpha_0} + 2\pi j(x)}(x)\|^2dx\\
 &\geq \int_{\Gamma\setminus(B_{\alpha_0}\cup B_{\alpha_0+2\pi})}E^2\,dx\\
 &\geq  E^2(|\Gamma|-|B_{\alpha_0}|-|B_{{\alpha_0}+2\pi}|)\\
 &>  E^2\left(|\Gamma|-2\frac{|\Gamma|}4\right)=E^2\frac{|\Gamma|}{2}>0.
\end{align*}
We have additionally used the fact that, by our choice of the sets $B_{\alpha_0}$ and $B_{\alpha_0+2\pi}$, for $x$ outside these sets it holds that $\|h(x)-\bar \uu_{\alpha_0+2\pi j(x)}(x)\|\geq E$.
 
\noindent\textbf{Step 4.} \emph{Definition and properties of $\bar h_0$.}

Let,  for $x\in\Gamma$,
\[\bar h_0(x)=\min\left(\|h(x)-\bar \uu_{{\alpha_0}}(x)\|,\;\frac1{\denomxi}\right).
\]
The role of the function $\bar h_0$ is to give a sort of truncated version of the distance from $h$ to $\bar \uu_{{\alpha_0}}$ that will be useful in our estimation below.
Note that, by the definition of $B_{\alpha_0}$, $\bar h_0(x)\leq E$ on $B_{\alpha_0}$.

Observe that if we parameterize $\Gamma$ in polar coordinates with the rectangle $[\frac12, 1]\times [\alpha_0,\alpha_0+2\pi)$, then $\bar \uu_{\alpha_0}$ is smooth on that chart, and $\bar h_0$ is in $W^{1,2}$, as is $h$. In other words, although $\bar h_0$ is possibly discontinuous on the ray segment 
\[R_{\alpha_0}=\{x\in \Gamma:x=r(\cos{\alpha_0},\sin{\alpha_0}), \;r\in[\frac12,1]\}\] 
corresponding to angle $\alpha_0$, it is a Sobolev $W^{1,2}$ function on the rest of $\Gamma$.

\noindent\textbf{Claim.} We have, for almost every $x\in \Gamma\setminus R_{\alpha_0}$,
\begin{equation}\label{eq:estimatederivative}
 \|Dh(x)-V(x,h(x))\|\geq \|D\bar h_0(x)\|.
\end{equation}
\begin{proof}[Proof of the claim.] We have the following cases for $x\in\Gamma\setminus R_{\alpha_0}$:
\begin{itemize}
\item In the region where $\|h(x)-\bar \uu_{{\alpha_0}}(x)\|\geq 1/\denomxi$, the function $\bar h_0$ is constant so the right-hand side equals zero, and the inequality is verified trivially.
\item In the region where $0< \|h(x)-\bar \uu_{{\alpha_0}}(x)\|< 1/\denomxi$, we have $(x,h(x))\in \Delta$ because for $x\in\Gamma$ we have $\frac12\leq \|x\|\leq1$ and then, expanding the squared inequality 
\[0< \|h(x)-\bar \uu_{{\alpha_0}}(x)\|^2=\|h(x)\|^2-2\langle \bar \uu_{{\alpha_0}}(x),h(x)\rangle+\|\bar \uu_{{\alpha_0}}(x)\|^2< \tfrac{1}{\denomxi^2},\] 
we get
\begin{multline*}
\langle\bar \uu_{\alpha_0}(x),h(x)\rangle>\tfrac12(\|h(x)\|^2+\|\bar \uu_{\alpha_0}(x)\|^2-\tfrac1{10^2})\\
 \geq\tfrac12((\|\bar \uu_{\alpha_0}(x)\|-\tfrac1{10})^2+\|\bar \uu_{\alpha_0}(x)\|^2-\tfrac1{10^2})=\|\bar \uu_{\alpha_0}(x)\|^2-\tfrac1{10}\|\bar \uu_{\alpha_0}(x)\|\\
 =\|x\|^6-\tfrac1{10}\|x\|^3>\frac{\|x\|^6}{10}.
\end{multline*}
Since the left-hand side equals $|\langle \uu_i(x),h(x)\rangle|$ for some $i=0,1$ by \eqref{eq:ubarexplain}, this shows that $(x,h(x))\in \Delta$, as per its definition \eqref{eq:Delta}.
This, in turn, means (by Lemma \ref{lem:psi}\ref{it:Vsmooth}) that the left-hand side of \eqref{eq:estimatederivative} reduces to \[\|Dh(x)-D\bar \uu_{\alpha_0}(x)\|\]
by our choice of $\psi$ and \eqref{eq:ubarexplain}.
Inequality  \eqref{eq:estimatederivative} then  follows by taking $\phi(x)=h(x)-\bar \uu_{\alpha_0}(x)$ and observing that all weakly differentiable functions $\phi$ verify, almost everywhere within the set where $\|\phi\|\neq 0$,
\[ \Big\|D\|\phi\|\Big\|=\left\|\frac{\phi}{\|\phi\|}D\phi\right\|=\left\|(D\phi)^t\frac{\phi^t}{\|\phi\|}\right\|\leq\|(D\phi)^t\| \frac{\|\phi\|}{\|\phi\|}=\|D\phi\|,\]
where $(D\phi)^t$ is the transposed Jacobian matrix and $\|(D\phi)^t\|=\|D\phi\|$ is its operator norm.
\item In the region where $ \bar h_0(x)=\|h(x)-\bar \uu_{{\alpha_0}}(x)\|=0$, %
 the weak derivative of $\bar h_0$ vanishes wherever it is defined (because $\bar h_0$ is nonnegative), hence verifying \eqref{eq:estimatederivative}; the set where it is not defined has measure zero because $\bar h_0$ is weakly differentiable since $h$ is.
\end{itemize}
\end{proof}

\noindent\textbf{Step 5.} \emph{Bound for $h$ when} 
\begin{equation}\label{eq:largesets}
|B_{\alpha_0}|=|B_{{\alpha_0}+2\pi}|\geq \frac{|\Gamma|}{\denomA}.
\end{equation}
For $x\in B_{\alpha_0+2\pi}$, $\|h(x)-\bar \uu_{\alpha_0+2\pi}\|\leq E$, so 
\[
 \|\bar \uu_{\alpha_0}(x)-\bar \uu_{\alpha_0+2\pi}(x)\|
 \leq\|\bar \uu_{\alpha_0}(x)-h(x)\|+\|h(x)-\bar \uu_{\alpha_0+2\pi}(x)\|\leq \|\bar \uu_{\alpha_0}(x)-h(x)\|+E,
\]
and then
\[\|h(x)-\bar \uu_{\alpha_0}(x)\|>\|\bar\uu_{\alpha_0}(x)-\bar \uu_{\alpha_0+2\pi}(x)\|-E\geq \frac18-E>\frac1{10},\]
so $\bar h_0(x)=1/10$.
Hence, using \eqref{eq:largesets},
\[M\coloneqq |\Gamma|^{-1}\int_\Gamma \bar h_0(x)dx\geq|\Gamma|^{-1}\int_{B_{\alpha_0+2\pi}} \frac 1{\denomxi} dx\geq\frac{|B_{\alpha_0+2\pi}|}{\denomxi|\Gamma|}\geq\frac{1}{ \the\numexpr\denomxi*\denomA\relax}.%
\]
The domain consisting of the slit corona $\Gamma\setminus R_{\alpha_0}$ satisfies the so-called \emph{cone property} \cite[Definition 2.5.14]{gasipapa}.
By \eqref{eq:estimatederivative} together with the Poincar\'e-Wirtinger inequality \cite[Theorem 2.5.21]{gasipapa} for the domain $\Gamma\setminus R_{\alpha_0}$ with constant $C>0$,
\begin{align*}
    \int_\Omega L(x,h(x),Dh(x))dx&=\int_\Omega \|h(x)-U(x,h(x))\|^2 \\
    &\qquad +\|Dh(x)-V(x,h(x))\|^2+g(x,h(x))\,dx\\
    &\geq \int_\Gamma  \|Dh(x)-V(x,h(x))\|^2dx\\
    &\geq \int_{\Gamma\setminus R_{\alpha_0}}  \|D\bar h_0(x)\|^2dx\\
    &\geq C \int_{\Gamma\setminus R_{\alpha_0}}\left|\bar h_0(x)-M\right|^2dx.
\end{align*}
Now, since for $x\in B_{\alpha_0}$ we have $0\leq \bar h_0(x)\leq  E=1/\denomE<1/{ \the\numexpr\denomxi*\denomA\relax}\leq M$ there,  we have
\[|\bar h_0(x)-M|\geq F\coloneqq\frac{1}{ \the\numexpr\denomxi*\denomA\relax}-\frac{1}{ \denomE}>0,\]
so the above is
\begin{equation*}
    C \int_{\Gamma\setminus R_{\alpha_0}}\left|\bar h_0(x)-M\right|^2dx\geq C
    \int_{B_{\alpha_0}}   F^2dx= CF^2 |B_{\alpha_0}|\geq CF^2\frac{|\Gamma|}{\denomA}>0.
\end{equation*}
This is a uniform lower bound for all $h\in W^{1,2}(\Omega)$ satisfying the above constraints. 

Together, the bounds from Steps 3 and 5 prove the theorem.
\end{proof}

\section{Positive gap with integral constraints}\label{sec:integralconstraints} 

 The reader may be curious why we have not included, in the statements of Theorems \ref{thm:consolidated} and \ref{thm:nogap} and in the definitions \eqref{opt:classical} and \eqref{opt:relaxed} of $M_\mathrm{c}$ and $M_\mathrm{r}$, any integral constraints of the form
 \[\int_\Omega H(x,y(x),Dy(x))\,dx \le 0\quad\textrm{or}\quad  \int_\Omega H(x,y(x),Dy(x))\,dx = 0.\] 
 The reason is that  in the presence of these constraints, there may be a gap between the classical case and its relaxation. The following two sections give examples of such situations. The idea for each of these examples works in any dimensions $n,m>0$, and we show them in the special case $n=m=1$ for simplicity.
 
 We use the same notations as in the definitions \ref{opt:classical} and \ref{opt:relaxed} of $M_\mathrm{c}$ and $M_\mathrm{r}$.
 
 \subsection{Inequality integral constraints}
 
 Let $\Omega=(0,1)\subset\R$, $Y=\R=Z$, $L(x,y,z)=y$, $F(x,y,z)=y(1-y)$, $F_\partial=G=G_\partial=0$. Note that the only Lipschitz curves $y\colon\Omega\to Y$  such that $F(x,y(x),Dy(x))=0$ are $y_0(x)=0$ and $y_1(x)=1$, $x\in\Omega$, and these satisfy
 \[\int_{\Omega}L(x,y_0(x),Dy_0(x))dx=\int_0^10\,dx=0\quad\textrm{and}\quad\int_{\Omega}L(x,y_1(x),Dy_1(x))dx=\int_0^11\,dx=1.\]
 Let $H(x,y,z)=1-10y$. Consider the problem of computing $M_\mathrm{c}$ and $M_\mathrm{r}$ as in \ref{opt:classical} and \ref{opt:relaxed}, above, with the additional integral constraints
 \[\int_\Omega H(x,y(x),Dy(x))\,dx\leq 0\quad\textrm{and}\quad \int_{\Omega\times Y\times Z}H(x,y,z)\,d\mu(x,y,z)\leq 0,\]
 to be satisfied by the respective competitors $y$ and $(\mu,\mu_\partial)$.
 In other words, we have
\begin{alignat*}{2}
 M_\mathrm{c}= &\!\inf\limits_{y \in W^{1,\infty}((0,1);\R)}  &\quad &\displaystyle\int_0^1 y(x)\, dx \\ 
 &\textrm{subject to}& & y(x)\in\{ 0,1\},\quad \text{a.e. }x\in \Omega,\nonumber \\
 &&& \textstyle\int_0^1 (1-10\,y(x))\,dx\leq 0,
\end{alignat*}
 and
 \begin{alignat*}{2}
 M_\mathrm{r}=&\!\inf\limits_{(\mu,\mu_\partial)\in \mathcal M}  &\quad&\displaystyle\int_{\Omega\times Y\times Z} y\, d\mu(x,y,z) \\ 
 &\textrm{subject to} %
&& \operatorname{supp}\mu\subset (0,1)\times \{0,1\}\times \R,\nonumber\\ 
&&&\textstyle\int_{\Omega\times Y\times Z}(1-10\,y)\,d\mu(x,y,z)\leq 0.
\end{alignat*}
 We will show that in this case $M_\mathrm{c}>M_\mathrm{r}$. 
 
 For the classical case, we have
 \[\int_{\Omega}H(x,y_0,Dy_0)\,dx=\int_0^11-0\,dx=1\stackrel{!}>0\quad\textrm{and}\quad\int_{\Omega}H(x,y_1,Dy_1)\,dx=\int_0^11-10\,dx=-9\leq 0,\]
 so in the calculation of $M_\mathrm{c}$ the only competitor is $y_1$, because $y_0$ does not satisfy the integral constraint. We conclude that $M_\mathrm{c}=1$.
 
 For the relaxed case, consider the measure  $\mu=\frac9{10}\mu_0+\frac1{10}\mu_1$, where $\mu_i$ is the measure induced by $y_i$, $i=0,1$. Then 
 \[\int_{\Omega\times Y\times Z}H\,d\mu=\tfrac9{10}\int_{\Omega\times Y\times Z}H\,d\mu_0+\tfrac1{10}\int_{\Omega\times Y\times Z}H\,d\mu_1=\tfrac{9}{10}-9\tfrac1{10}=0,\]
 so  $\mu$ satisfies the constraint. We also have
 \[\int_{\Omega\times Y\times Z}L\,d\mu=\tfrac9{10}\int_{\Omega\times Y\times Z}L\,d\mu_0+\tfrac1{10}\int_{\Omega\times Y\times Z}L\,d\mu_1=\tfrac1{10}.\]
 Thus $M_\mathrm{r}\leq \frac1{10}<1=M_\mathrm{c}$.

\subsection{Equality integral constraints}

Let $\Omega=(0,1)\subset\R$, $Y=\R=Z$, $L(x,y,z)=y$, $F(x,y,z)=y(y-1)(y-2)$, $F_\partial=G=G_\partial=0$. Note that the only Lipschitz curves $y\colon\Omega\to Y$  such that $F(x,y(x),Dy(x))=0$ are $y_0(x)=0$, $y_1(x)=1$, and $y_2(x)=2$, $x\in\Omega$, and these satisfy
 \[\int_{\Omega}L(x,y_i(x),Dy_i(x))dx=\int_0^1i\,dx=i,\quad i=0,1,2.\]
 Let $H(x,y,z)=\frac74y-\frac34y^2$. Consider the problem of computing $M_\mathrm{c}$ and $M_\mathrm{r}$ as in \eqref{opt:classical} and \eqref{opt:relaxed}, above, with the additional integral constraints
 \[\int_\Omega H(x,y(x),Dy(x))\,dx= \tfrac12\quad\textrm{and}\quad \int_{\Omega\times Y\times Z}H(x,y,z)\,d\mu(x,y,z)=\tfrac12,\]
 to be satisfied by the respective competitors $y$ and $(\mu,\mu_\partial)$. In other words, we have
\begin{alignat*}{2}
 M_\mathrm{c}= &\!\inf\limits_{y \in W^{1,\infty}((0,1);\R)}  &\quad &\displaystyle\int_0^1 y(x)\, dx \\ 
 &\textrm{subject to}& & y(x)\in\{ 0,1,2\},\quad \text{a.e. }x\in \Omega,\nonumber \\
 &&& \textstyle\int_\Omega (\tfrac74y(x)-\tfrac34y(x)^2)\,dx=\tfrac12,
\end{alignat*}
 and
 \begin{alignat*}{2}
 M_\mathrm{r}=&\!\inf\limits_{(\mu,\mu_\partial)\in \mathcal M}  &\quad&\displaystyle\int_{\Omega\times Y\times Z} y\, d\mu(x,y,z) \\ 
 &\textrm{subject to} %
&& \operatorname{supp}\mu\subset \Omega\times \{0,1,2\}\times Z,\nonumber\\ 
&&&\textstyle\int_{\Omega\times Y\times Z}(\tfrac74y-\tfrac34y^2)\,d\mu(x,y,z)=\tfrac12.
\end{alignat*}
We will show that in this case $M_\mathrm{c}>M_\mathrm{r}$ too.
 
 For the classical case, we have
 \[\int_{\Omega}H(x,y_0,Dy_0)\,dx=\int_0^10\,dx=0\stackrel{!}\neq\tfrac12 \quad\textrm{and}\quad\int_{\Omega}H(x,y_1,Dy_1)\,dx=\int_0^1\tfrac74-\tfrac34\,dx=1\stackrel!\neq \tfrac12,\]
 and
 \[\int_{\Omega}H(x,y_2,Dy_2)\,dx=\int_0^12\tfrac74-4\tfrac34dx=\tfrac12,\]
 so in the calculation of $M_\mathrm{c}$ the set of competitors contains only $y_2$. We conclude that \[M_\mathrm{c}=\int_0^1L(x,y_2(x),Dy_2(x))\,dx=\int_0^12\,dx=2.\]
 For the relaxed case, consider the measure  $\mu=\frac12\mu_0+\frac12\mu_1$, where $\mu_i$ is the measure induced by $y_i$, $i=0,1$. Then 
 \[\int_{\Omega\times Y\times Z}H\,d\mu=\tfrac12\int_{\Omega\times Y\times Z}H\,d\mu_0+\tfrac12\int_{\Omega\times Y\times Z}H\,d\mu_1=0\tfrac{1}{2}+1\tfrac12=\tfrac12,\]
 so $\mu$ satisfies the constraint. We also have
 \[\int_{\Omega\times Y\times Z}L\,d\mu=\tfrac12\int_{\Omega\times Y\times Z}L\,d\mu_0+\tfrac12\int_{\Omega\times Y\times Z}L\,d\mu_1=\tfrac12.\]
 Thus $M_\mathrm{r}\leq \frac12<2=M_\mathrm{c}$.

\section{Acknowledgements}
This work was co-funded by the European Union under the ROBOPROX project (reg.~no. CZ.02.01.01/00/22 008/0004590) and by the Prime Minister’s Office, Singapore under its Campus for Research Excellence and Technological Enterprise (CREATE) programme.
This work was partially supported by UiT Aurora Center for Mathematical
Structures in Computations MASCOT. It was also partially supported by the project Pure Mathematics in Norway funded by Trond Mohn Foundation and Tromsø Research Foundation. It was also partially supported by ANR--3IA Artificial and Natural Intelligence Toulouse Institute [ANR--19--PI3A--0004].

The authors would like to thank Martin Kru\v z\'ik, Jared Miller, Corbinian Schlosser and Ian Tobasco for constructive feedback on this work.

The authors would like to acknowledge the valuable effort of Alessandro Vici, whose masters thesis \cite{vici} focused on this work before its final publication, and included comments that helpfully pointed out minor errors in Lemmas \ref{lem:dividentity}, \ref{lem:function}, and \ref{lem:weakderivative}.

\paragraph{Data availability statement.} This manuscript has no associated data.
\paragraph{Competing interests.} The authors have no competing interests.

\bibliography{bibl}{}
\bibliographystyle{plain}

\end{document}